\newtheorem{thm}{Theorem}[section]
\newtheorem {asp}{Assumption}[section]
\newtheorem{lm}{Lemma}[section]
\newtheorem{prop}{Proposition}[section]
\theoremstyle{definition}
\theoremstyle{remark}
\newtheorem{rem}{Remark}[section]
\numberwithin{equation}{section}
\newcommand{\eps}{\varepsilon}
\newcommand{\F}{\mathcal{F}}
\newcommand{\E}{\mathbb{E}}
\newcommand{\bs}{\mathbf{s}}
\newcommand{\Lom}{\mathcal{L}}
\newcommand{\N}{{\mathbb{Z}}_+}
\newcommand{\Q}{{\mathbb{Q}}}
\newcommand{\BX}{\bar{X}}
\newcommand{\BY}{\bar{Y}}
\newcommand{\BS}{\mathbf{S}}
\newcommand{\PP}{\mathbb{P}}
\newcommand{\R}{\mathbb{R}}
\numberwithin{equation}{section}
\newcommand{\1}{\boldsymbol{1}}
\newcommand{\wdt}{\widetilde}
\newcommand{\op}{{\mathcal L}}
\newcommand{\bed}{\begin{equation}}
\newcommand{\eed}{\end{equation}}
\newcommand{\bea}{\bed\begin{array}{rl}}
\newcommand{\eea}{\end{array}\eed}
\newcommand{\barray}{\begin{array}{ll}}
\newcommand{\earray}{\end{array}}
\newcommand{\dist}{{\rm dist}}
\def\bar{\overline}
\def\hat{\widehat}
\def\a.s{\text{\;a.s.\;}}
\def\bnu{\boldsymbol{\nu}}
\def\bmu{\boldsymbol{\nu}}
\def\bdelta{\boldsymbol{\delta}}
\def\beq{\begin{equation}}
	\def\eeq{\end{equation}}
\def\ben{\begin{enumerate}}
	\def\een{\end{enumerate}}
\def\beqar{\begin{eqnarray}}
	\def\eeqar{\end{eqnarray}}
\def\beqarr{\begin{eqnarray*}}
	\def\eeqarr{\end{eqnarray*}}
\begin{document}
\title{Stochastic nutrient-plankton models}

\author[A. Hening]{Alexandru Hening }
\address{Department of Mathematics\\
Texas A\&M University\\
Mailstop 3368\\
College Station, TX 77843-3368\\
United States
}
\email{ahening@tamu.edu}

\author[N. T. Hieu]{Nguyen Trong Hieu}
\address{University of Science, \\ Vietnam National University, Hanoi,
	334 Nguyen Trai St\\
	Hanoi,
	Vietnam}
\email{hieunguyentrong@gmail.com}

\author[D.H. Nguyen]{Dang Hai Nguyen }
\address{Department of Mathematics \\
University of Alabama\\
345 Gordon Palmer Hall\\
Box 870350 \\
Tuscaloosa, AL 35487-0350 \\
United States}
\email{dangnh.maths@gmail.com}

\author[N. N. Nguyen]{ Nhu Ngoc Nguyen }
\address{
	Department of Mathematics and Applied Mathematical Sciences\\
University of Rhode Island\\
5 Lippitt Road, Suite 200\\
Kingston, RI 02881-2018\\
United States}
\email{nhu.nguyen@uri.edu }

\maketitle

\begin{abstract}
We analyze plankton-nutrient food chain models composed of phytoplankton, herbivorous zooplankton and a limiting nutrient. These models have played a key role in understanding the dynamics of plankton in the oceanic layer. Given the strong environmental and seasonal fluctuations that are present in the oceanic layer, we propose a stochastic model for which we are able to fully classify the longterm behavior of the dynamics. In order to achieve this we had to develop new analytical techniques, as the system does not satisfy the regular disspativity conditions and the analysis is more subtle than in other population dynamics models.
\bigskip

\noindent {\bf Keywords.} nutrient-plankton model, switching diffusion, ergodicity, invariant measure
\end{abstract}

\section{Introduction}\label{sec:int}

The oceans of the world are populated by small, free floating or weakly swimming, organisms called plankton. More specifically, plankton can be divided into phytoplankton, which are plants, and zooplankton, which are animals that consume the phytoplankton. These tiny organisms have a significant impact on the various food chains present in the oceans as they form the bottom of the food chains. In addition, they also seem to play a role in the Earth's carbon cycle. Because it is hard to empirically measure the amount of plankton, it is important to build simple mathematical models that will allow us to better understand the dynamics of plankton.

The analysis of mathematical models for plankton dynamics can be traced to Hallam \cite{H77, H772, H78}, who obtained stability and persistence results for nutrient controlled plankton models. Since then, people have studied the dynamics of models that include phytoplankton, zooplankton and a nutrient that is consumed by the phytoplankton. This nutrient can be regenerated due to the bacterial decomposition of dead phytoplankton and zooplankton. In this paper we assume that the nutrient recycling is instantaneous, and therefore neglect the time required to regenerate the nutrient from dead plankton.

In our model, which first appeared in \cite{W88} and was generalized in \cite{R93}, the limiting nutrient has a constant input concentration $N^0$ while the nutrient, phytoplankton and zooplankton have constant washout rates $D, D_1$ and $D_2$. It is important to include the washout rates because they describe the removal due to washout, sinking, or harvesting of biotic mass from the ecosystem.

The models we study can be seen as describing the dynamics of the zooplankton-phytoplankton-nutrient trio within lakes or oceans. Since water masses have nutrient residence times of years \cite{PR85}, one must consider the regeneration of nutrient due to bacterial decomposition of dead plankton. We assume that the zooplankton only feeds on phytoplankton and that parts of the dead phytoplankton and zooplankton are instantaneously recycled into the nutrient. We are then able to find two thresholds, which depend on the model parameters, that completely characterize the persistence or extinction of the two types of plankton.

Natural ecosystems will be influenced by random environmental fluctuations. These fluctuations will have a significant impact on the dynamics of the various species. As a result, in order to have a realistic model of the species dynamics in an ecosystem it is key to include environmental fluctuations in the mathematical framework. It is well known that environmental fluctuations can have a significant impact on the long term behavior: in certain cases coexistence can be reversed into extinction while in others extinction becomes coexistence \cite{BL16, HN20, HNS21}. A successful way to model environmental fluctuations has been via stochastic differential equations, and more generally, Markov processes \cite{C82, CE89, C00, ERSS13, EHS15,  LES03, SLS09, SBA11, BS09, BHS08, B18, HNC20}.

There are many ways in which one can model the environmental fluctuations that affect an ecological system. One way is by going from ordinary differential equations (ODE) to stochastic differential equations (SDE). This amounts to saying that the various birth, death and interaction rates in an ecosystem are not constant, but fluctuate around their average values according to some white noise. There is now a well established general theory of coexistence and extinction for these systems when they are in Kolmogorov form \cite{SBA11,HN16,HNC20} if strong dissipativity assumptions hold.
Furthermore, under the same type of dissipativity assumptions, the complete classification of the dynamics for three-species SDE Kolmogorov systems has been provided in \cite{HNS21}.  The dissipativity assumption is natural in many systems - intuitively it says that if one of the species has a high population density, the species has a strong drift towards $0$. The simplest two-dimensional example would be the predator-prey system
\begin{equation*}
	\begin{aligned}
dX(t) &= X(t)(a-b X(t)-cY(t))dt + \sigma_1 X(t) dW_1(t)\\
dY(t) &= Y(t)(-d - fY(t) + cX(t))dt + \sigma_2 Y(t) dW_2(t)
\end{aligned}
\end{equation*}
where the dissipativity is due to the intraspecific competition terms $-bX^2(t)dt$ and $-fY^2(t)$. However, plankton models do not satisfy these boundedness/dissipativity conditions and therefore provide a significant technical challenge.

The paper \cite{YYZ19} studies plankton dynamics and was the main inspiration for writing the present paper. The results from \cite{YYZ19} are interesting but they are not sharp and do not provide a full classification of the dynamics. We significantly generalize the results from \cite{YYZ19} and provide a complete characterization of the long term behavior of the system. We note that we restrict our analysis to SDE but the results can be easily generalized to SDE with switching.

The contributions and novelties of this work is two-fold.
First, we advance the study of food chain models in marine ecology by investigating systems under general formulation and proving a complete characterization of what will happen in the long time, for the first time.
Second, we introduce new techniques to classify longtime properties of stochastic differential equations in which dissipativity conditions, which are needed in existing works, do not hold.

The paper is organized as follows. In Section \ref{s:ma} we give the mathematical setup and the results. In Section \ref{s:sketch} we give a sketch of the proof, explain the main technical difficulties, and showcase the new techniques that have to be developed in order to analyze this system. The analysis of the extinction results, Theorems \ref{thm2} and \ref{thm3}, appears in Sections \ref{s:ext_1} and \ref{s:ext}, while the proof of the main persistence result, Theorem \ref{thm4}, is in Section \ref{s:pers}.

\subsection{Mathematical Setup and Results}\label{s:ma}

A natural deterministic model for a nutrient-plankton system is given, according to \cite{R93}, by

\begin{equation}\label{main_det}
	\begin{aligned}
\frac{dX}{dt}(t)&= \Lambda-\alpha_1X(t) - aY(t)X(t)+\alpha_4Y(t)+\alpha_5 Z(t)\\
\frac{dY}{dt}(t)&= aY(t)X(t)-bY(t)Z(t)-\alpha_2Y(t)\\
\frac{dZ}{dt}(t)&= bY(t)Z(t)-\alpha_3Z(t)\\
\end{aligned}
\end{equation}
where $(X(t), Y(t), Z(t))$ are the densities of the nutrient, phytoplankton and zooplankton at time $t\geq 0$. The various coefficients are related to biological factors as follows:
$N_0$ is the input concentration of nutrient, $\alpha_1$ is the washout rate for the nutrient, $\Lambda := N^0 \alpha_1$, $\alpha_2$ is the sum of the death rate and the washout rate of the phytoplankton, $\alpha_3$ is the sum of the death rate and the washout rate of the zooplankton,  $a$ is the maximal nutrient uptake rate of phytoplankton, $b$ is the maximal nutrient uptake rate of zooplankton,  $\alpha_4$ is the nutrient recycling rate from dead phytoplankton, and $\alpha_5$ is the nutrient recycling rate from dead zooplankton. This model has been studied in \cite{R93} where the author found sufficient conditions for extinction and persistence.
It is natural to generalize the functional responses from \eqref{main_det} so that nonlinear interactions can be captured. One way is by looking at the dynamics of the type
\begin{equation}\label{main0}
	\begin{aligned}
\frac{dX}{dt}(t) &= \Lambda- F_1(X(t),Y(t))X(t)Y(t)-\alpha_1X(t)+\alpha_4 Y(t)+\alpha_5 Z(t)\\
\frac{dY}{dt}(t) &= F_1(X(t),Y(t))X(t)Y(t)-F_2(Y(t),Z(t))Y(t)Z(t)-\alpha_2Y(t)\\
\frac{dZ}{dt}(t) &= F_2(Y(t),Z(t))Y(t)Z(t)-\alpha_3Z(t)\\
\end{aligned}
\end{equation}
where $F_1(x,y), F_2(y,z)$ can now be non-constant functions. The last step is introducing the environmental white-noise fluctuations, which turns the system of ODE \eqref{main0} into the system of SDE
\begin{equation}\label{main}
	\begin{aligned}
dX(t) &= [\Lambda- F_1(X(t),Y(t))X(t)Y(t)-\alpha_1X(t)+\alpha_4 Y(t)+\alpha_5 Z(t)]dt+\sigma_1 X(t)dW_1(t)\\
dY(t) &= [F_1(X(t),Y(t))X(t)Y(t)-F_2(Y(t),Z(t))Y(t)Z(t)-\alpha_2Y(t)]dt+\sigma_2 Y(t)dW_2(t)\\
dZ(t) &= [F_2(Y(t),Z(t))Y(t)Z(t)-\alpha_3Z(t)]dt+\sigma_3 Z(t)dW_3(t)\\
\end{aligned}
\end{equation}
where $(W_1(t), W_2(t), W_3(t))$ is a standard Brownian motion on $\R^3$ on a complete probability space $(\Omega,\F,\{\F_t\}_{t\geq0},\PP)$ with a filtration $\{\F_t\}_{t\geq 0}$ satisfying the usual conditions.
Throughout this paper, $\R^3_+=\{(x,y,z)\in\R^3: x,y,z\geq 0\}$,
$\R^{3,\circ}_+=\{(x,y,z)\in\R^3: x,y,z> 0\}$.
Let $\BS(t):=(X(t), Y(t), Z(t))$ and let $\bs=(x,y,z) \in \R_{+}^{3}$ denote the initial conditions, that is $\BS(0):=(X(0), Y(0), Z(0))=\bs$.
We denote by $\op$ the generator of the diffusion process $\BS$ from \eqref{main}.
We will also use $\PP_{\bs}$, $\E_\bs$ to indicate the initial value of the solutions.

The following assumption is held throughout the paper.
\begin{asp}\label{asp-1}
The following conditions hold.
\begin{enumerate}
	\item $\alpha_4<\alpha_2$ and $\alpha_5<\alpha_3$.
	\item $F_1(\cdot)$ and $F_2(\cdot)$ are  functions on $\R^2_+$ bounded by $L>0$. Suppose $F_1(u,v)u, F_2(u,v)u$ are Lipschitz functions whose Lipschitz coefficients are bounded by $L.$
	\item $F_1(u,0)u$ is nondecreasing.
\end{enumerate}
\end{asp}
%\red{Should we have a remark explaining some of the assumptions and why they are needed?}
\begin{rem}
	Assumption \ref{asp-1} (1) is natural since $\alpha_4$ (resp. $\alpha_5$) is the nutrient recycling rate from dead phytoplankton (resp. zooplankton) that must be always less than $\alpha_2$ (resp. $\alpha_3$), the death rate and the washout rate of the phytoplankton (resp. zooplankton).
	%It is similar that $\alpha_5$ must be less than $\alpha_3$.
	Assumption \ref{asp-1} (2) and (3) are mild and satisfied by almost all the models from the literature.
\end{rem}

%We denote by $\op$ the generator of the diffusion process $\BS$ from \eqref{main}.
%We will also use $\PP_{\bs}$, $\E_\bs$ to indicate the initial value of the solutions.

The first theorem tells us that we can bound the moments of the process, and that the process stays in compact sets with large probability.
\begin{thm}\label{thm1}
	For any initial value $\bs=(x,y,z)\in\R^3_+$, there exists a unique a global solution
	$\BS(t)$ to \eqref{main}
	such that $\PP_{\bs}\{\BS(t)\in\R^3_+,\  \forall t\geq0\}=1.$
	Moreover, $X(t)>0$ for all $t>0$ with probability 1 and if $Y(0)=0$ (resp. $Z(0)=0$) then $Y(t)=0$ (resp. $Z(t)=0)$ for all $t\geq0$ with probability 1.
	We also have that  $\BS(t)$ is a Markov-Feller process on $\R^3$.
	Furthermore,
there are $q_0>1$, $\alpha_0>0$ such that for any $q\in[1,q_0]$,
\begin{equation}\label{e1-thm1}
	\E_\bs (1+X(t)+Y(t)+Z(t))^{q}\leq (1+x+y+z)^{q} e^{-\alpha_0 t} + C_{q_0},\;\forall \bs\in\R^3_+.
\end{equation}
In addition, there exists $\bar K>0$ such that
\begin{equation}\label{e2-thm1}
\E_\bs (1+X(t)+Y(t)+Z(t))^{2}\leq  e^{\bar K t} (1+x+y+z)^2,\;\forall \bs\in\R^3_+.
\end{equation}
Finally, for any $\eps>0, H>0, T>0$, there exists $\wdt K(\eps, H, T)>0$ such that
\begin{equation}\label{e3-thm1}
\PP_\bs\left\{X(t)+Y(t)+Z(t)\leq \wdt K(\eps, H, T),\;\forall 0\leq t\leq T\right\}\geq 1-\eps \text{ given } |\bs|\leq H.
\end{equation}
\end{thm}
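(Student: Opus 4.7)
\textbf{Proof plan for Theorem \ref{thm1}.} The key algebraic observation is that summing the three equations in \eqref{main} causes the nonlinear interaction terms $F_1(X,Y)XY$ and $F_2(Y,Z)YZ$ to cancel, leaving
\[
d(X{+}Y{+}Z)(t) = \bigl[\Lambda - \alpha_1 X(t) - (\alpha_2{-}\alpha_4)Y(t) - (\alpha_3{-}\alpha_5)Z(t)\bigr]dt + \sigma_1 X\,dW_1 + \sigma_2 Y\,dW_2 + \sigma_3 Z\,dW_3.
\]
Setting $V(\bs):=1+x+y+z$ and $\alpha^*:=\min\{\alpha_1,\alpha_2-\alpha_4,\alpha_3-\alpha_5\}$, which is strictly positive by Assumption~\ref{asp-1}(1), this yields the single Lyapunov inequality $\op V(\bs)\leq (\Lambda+\alpha^*) - \alpha^* V(\bs)$, and this drives every estimate in the theorem.

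First I would obtain local existence and uniqueness from the local Lipschitz property of the coefficients under Assumption~\ref{asp-1}(2), and then use the Lyapunov bound above together with a standard localization/stopping-time argument to rule out finite-time explosion and conclude that $\BS(t)\in\R^3_+$ for all $t\geq 0$. The axes $\{Y=0\}$ and $\{Z=0\}$ are invariant because $Y$ and $Z$ appear as multiplicative factors in both drift and diffusion of their respective equations. For the strict positivity of $X(t)$ on $t>0$, I would compare $X$ pathwise with the linear SDE
\[
d\wdt X(t) = \bigl[\Lambda - (\alpha_1 + L Y(t))\wdt X(t)\bigr]dt + \sigma_1 \wdt X(t)\,dW_1(t),\qquad \wdt X(0)=X(0),
\]
driven by the same Brownian motion $W_1$; since $F_1\leq L$ and $\alpha_4 Y+\alpha_5 Z\geq 0$, the drift of $X$ dominates that of $\wdt X$, so the Ikeda--Watanabe comparison theorem gives $X\geq \wdt X$, and the explicit variation-of-constants formula for $\wdt X$ is strictly positive for $t>0$ because $\Lambda>0$. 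The Markov--Feller property then follows from pathwise uniqueness and standard continuous dependence on initial data.

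For \eqref{e1-thm1}, applying It\^o's formula to $V^q$ and using $X^2+Y^2+Z^2\leq (X+Y+Z)^2\leq V^2$ produces
\[
\op V^q \leq V^q\Bigl[-q\alpha^* + \tfrac{q(q-1)}{2}\sigma_{\max}^2\Bigr] + q(\Lambda+\alpha^*)V^{q-1},\qquad \sigma_{\max}^2:=\max\{\sigma_1^2,\sigma_2^2,\sigma_3^2\}.
\]
Choosing $q_0>1$ close enough to $1$ that the bracketed coefficient is at most $-2\alpha_0$ for all $q\in[1,q_0]$, Young's inequality absorbs $V^{q-1}$ into $\alpha_0 V^q + C_{q_0}$, so $\op V^q\leq -\alpha_0 V^q + C_{q_0}$; Dynkin's formula together with Gr\"onwall yields \eqref{e1-thm1}. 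The bound \eqref{e2-thm1} is the same calculation with $q=2$ but without requiring a negative bracket: it gives $\op V^2\leq \bar K V^2$, and Gr\"onwall produces the exponential-growth estimate. For \eqref{e3-thm1}, the process $V^2(\BS(t))e^{-\bar K t}$ is a nonnegative supermartingale (since $\op V^2-\bar K V^2\leq 0$), so Doob's maximal inequality delivers
\[
\PP_\bs\Bigl\{\sup_{0\leq t\leq T} V(\BS(t)) \geq \wdt K\Bigr\}\leq \frac{V^2(\bs) e^{\bar K T}}{\wdt K^2} \leq \frac{(1+H)^2 e^{\bar K T}}{\wdt K^2},
\]
which is at most $\eps$ once $\wdt K=\wdt K(\eps,H,T)$ is chosen large enough.

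The main obstacle is the algebraic tuning in the $V^q$ estimate that is needed to secure $q_0>1$: the diffusive quadratic variation contributes $\tfrac{q(q-1)}{2}\sigma_{\max}^2$ to the effective decay rate, so one must quantify $q-1<2\alpha^*/\sigma_{\max}^2$ to fix an admissible window of exponents above $1$. Once this is in place, everything else reduces to routine It\^o calculus, Gr\"onwall's lemma, and standard supermartingale inequalities.
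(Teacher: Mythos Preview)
Your proposal is correct and follows essentially the same approach as the paper: the same Lyapunov function $V(\bs)=1+x+y+z$, the same cancellation of the $F_1,F_2$ terms upon summing, the same choice of $q_0>1$ constrained by $(q_0-1)\sigma_{\max}^2$ versus $\min\{\alpha_1,\alpha_2-\alpha_4,\alpha_3-\alpha_5\}$, and the same passage from $\op V^q\leq C-\alpha_0 V^q$ (resp.\ $\op V^2\leq \bar K V^2$) to \eqref{e1-thm1}--\eqref{e3-thm1} via It\^o/Dynkin with localization. The only cosmetic differences are that the paper defers existence, invariance of the axes, strict positivity of $X$, and the Feller property to external references, whereas you sketch them directly (your comparison argument for $X(t)>0$ is fine), and for \eqref{e3-thm1} you invoke a supermartingale maximal inequality where the paper uses the stopping time $\bar\tau_n$ together with Markov's inequality---both routes are standard and equivalent here.
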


Let $\hat X$ be the solution of the following equation
\begin{equation}\label{main11}
		d\hat X(t)= [\Lambda- \alpha_1 \hat X(t)]dt+\sigma_1 \hat X(t)dW_1(t).
\end{equation}
One can show that this one-dimensional SDE has a unique invariant measure $\mu_1$ on $[0,\infty)$, which is an inverse Gamma distribution (see Lemma \ref{lm5}). Define
$$\lambda_1:=\int_{[0,\infty)}F_1(u,0)u\mu_1(du) -\alpha_2-\frac{\sigma_2^2}{2}.$$

\begin{rem}
We note that when $F_1(\cdot,\cdot)=a$ is a constant function, and we are in the simplified setting where the deterministic part is the one from \eqref{main_det}, we get
\[
\lambda_1 = a\frac{\Lambda}{\alpha_1}-\alpha_2-\frac{\sigma_2^2}{2}.
\]
\end{rem}

The next result tells us that if $\lambda_1$ is negative then both the phytoplankton and the zooplankton go extinct with probability $1$. Furthermore, it also gives the exact exponential rates of extinction.
\begin{thm}\label{thm2}
	If $\lambda_1<0$ then for any $\BS(0)=\bs\in \R_{+}^{3,\circ}$ we have with probability 1 that
	\begin{equation}\label{thm2-e1}
\lim_{t\to\infty}\frac{\ln Y(t)}t=\lambda_1 \text{ and } \lim_{t\to\infty}\frac{\ln Z(t)}t=-\alpha_3-\frac{\sigma_3^2}2.
	\end{equation}
\end{thm}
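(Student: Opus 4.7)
The plan is to apply Itô's formula to $\ln Y$ and $\ln Z$, reducing the statement to controlling the time averages of $F_1(X,Y)X$ and $F_2(Y,Z)Y$ along the trajectory, and then to compare $X$ with the auxiliary process $\hat X$ of \eqref{main11} via a variation of constants representation. Explicitly,
\begin{equation*}
\frac{\ln Y(t)}{t} = \frac{\ln Y(0)}{t} + \frac{1}{t}\int_0^t F_1(X,Y)X\,ds - \frac{1}{t}\int_0^t F_2(Y,Z)Z\,ds - \alpha_2 - \frac{\sigma_2^2}{2} + \frac{\sigma_2 W_2(t)}{t},
\end{equation*}
and analogously for $\ln Z$; the Brownian terms vanish a.s.\ by the strong law of large numbers.

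Setting $X_0(t):=\exp(-\alpha_1 t + \sigma_1 W_1(t) - \sigma_1^2 t/2)$ and taking $\hat X$ to solve \eqref{main11} with $\hat X(0)=X(0)$, variation of constants gives
\begin{equation*}
X(t)-\hat X(t)=\int_0^t \frac{X_0(t)}{X_0(s)}\bigl[\alpha_4 Y(s)+\alpha_5 Z(s)-F_1(X(s),Y(s))X(s)Y(s)\bigr]\,ds.
\end{equation*}
The Lipschitz hypothesis in Assumption \ref{asp-1}(2) yields $|F_1(X,Y)X-F_1(\hat X,0)\hat X|\leq L(|X-\hat X|+Y)$, and the ergodic theorem applied to $\hat X$ (Lemma \ref{lm5}) gives $\frac{1}{t}\int_0^t F_1(\hat X,0)\hat X\,ds\to \int F_1(u,0)u\,\mu_1(du)$ a.s. Thus the target identity $\frac{1}{t}\int_0^t F_1(X,Y)X\,ds\to \int F_1(u,0)u\,\mu_1(du)$ reduces, via Fubini applied to the preceding display, to the decay of the time averages of $Y$ and $Z$.

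I would resolve the ensuing circularity by bootstrapping. First, using only the crude bound $F_1\leq L$, write $\frac{\ln Y(t)}{t}\leq \frac{L}{t}\int_0^t X\,ds - \alpha_2 - \sigma_2^2/2 + o(1)$; integrating the $X$-equation and invoking the moment bound \eqref{e1-thm1} together with the martingale SLLN shows that $\limsup_t\frac{1}{t}\int_0^t X\,ds$ is a.s.\ finite, hence $\limsup_t\frac{\ln Y(t)}{t}$ is a.s.\ finite. Iterating the comparison with the Lipschitz/monotonicity estimate then refines this to $\limsup_t\frac{\ln Y(t)}{t}\leq\lambda_1+\varepsilon$ for every $\varepsilon>0$, and since $\lambda_1<0$ this yields exponential extinction of $Y$. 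Substituting the exponential decay of $Y$ into the $\ln Z$ identity gives $\frac{1}{t}\int_0^t F_2(Y,Z)Y\,ds\to 0$ a.s.\ (as $F_2\leq L$), producing the exact rate $-\alpha_3-\sigma_3^2/2$ for $Z$.

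Finally, with both $Y,Z\to 0$ exponentially, the residual in the variation of constants formula decays pathwise in time-average; together with the Lipschitz estimate this yields the matching lower bound $\liminf_t\frac{1}{t}\int_0^t F_1(X,Y)X\,ds\geq \int F_1(u,0)u\,\mu_1(du)$ a.s., upgrading the earlier $\limsup$ inequality to an equality and giving $\lim_t\frac{\ln Y(t)}{t}=\lambda_1$. The main obstacle throughout is the bootstrap: the ergodic approximation $X\approx\hat X$ is valid only once $Y,Z$ are known to be small, which is itself what we are trying to prove. The resolution hinges on exploiting the Lipschitz and monotonicity structure of Assumption \ref{asp-1} together with the moment estimate \eqref{e1-thm1} to extract an iterable rough a priori bound that breaks the circularity.
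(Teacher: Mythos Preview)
Your outline has a real gap at the bootstrap step. The crude first pass yields only that $\limsup_t \tfrac{\ln Y(t)}{t}$ is a.s.\ finite (indeed $\le L\limsup_t \tfrac1t\int_0^t X\,ds-\alpha_2-\sigma_2^2/2$), but this quantity may well be \emph{positive}; nothing in the argument so far forces $Y$ to decay. If $\limsup_t \tfrac{\ln Y(t)}{t}=c>0$, then along subsequences $Y(t)\sim e^{ct}$, and your variation-of-constants residual $\int_0^t \tfrac{X_0(t)}{X_0(s)}\,[\alpha_4 Y(s)+\alpha_5 Z(s)-F_1XY]\,ds$ is of order $e^{ct}$ as well. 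Thus $|X-\hat X|$ need not be small in any averaged sense, the Lipschitz estimate $|F_1(X,Y)X-F_1(\hat X,0)\hat X|\le L(|X-\hat X|+Y)$ gives no improvement, and the iteration stalls at the very first step. You acknowledge this circularity but the resolution you offer (``iterating the comparison'') is exactly the step that fails.

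The paper breaks the circularity by a different mechanism. Instead of a global pathwise comparison, it first proves a \emph{local} extinction result (Proposition~\ref{prop3}): for initial data with $Y(0)\vee Z(0)<\delta$ small, one compares $X$ with the slightly inflated process $\hat X^\theta$ (solving $d\hat X^\theta=[\Lambda+\theta-\alpha_1\hat X^\theta]\,dt+\sigma_1\hat X^\theta\,dW_1$) up to the stopping time $\xi=\inf\{t:\alpha_4 Y(t)+\alpha_5 Z(t)\ge\theta\}$. On $[0,\xi]$ one has the one-sided bound $X\le \hat X^\theta$ by the standard comparison theorem, and the monotonicity of $u\mapsto F_1(u,0)u$ together with the Lipschitz bound in $v$ then gives $F_1(X,Y)X\le F_1(\hat X^\theta,0)\hat X^\theta + L\theta/\alpha_4$ directly, with no need to know $Y,Z$ are small in advance. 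Ergodicity of $\hat X^\theta$ plus $\lambda_1<0$ forces $\ln Y$ to decrease on $[T,\xi]$, hence $\xi=\infty$ on a set of probability $\ge 1-\varepsilon$, and one reads off the exact rates. The passage from this local statement to arbitrary $\bs\in\R^{3,\circ}_+$ is then handled by showing the averaged occupation measures $\check\Pi_t^\bs$ must converge weakly to $\mu_1\times\bdelta^*\times\bdelta^*$ (there being no other invariant measure), so the process a.s.\ enters the good neighborhood, after which the strong Markov property and the local result finish the proof. This localization-plus-stopping-time device is precisely what your global bootstrap is missing.
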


We are wondering that what will happen if $\lambda_1>0$.
We consider a system in the absence of zooplankton that is given by
\begin{equation}\label{main2}
	\begin{aligned}
		d \bar X(t)&= [\Lambda- F_1( \bar X(t),\bar Y(t)) \bar X(t) \bar Y(t)-\alpha_1 \bar X(t)+\alpha_4  \bar Y(t)]dt+\sigma_1  \bar X(t)dW_1(t)\\
		d \bar Y(t)&= [F_1( \bar X(t),\bar Y(t)) \bar X(t) \bar Y(t)-\alpha_2\bar Y(t)]dt+\sigma_2  \bar Y(t)dW_2(t).
	\end{aligned}
\end{equation}
If $\lambda_1>0$, the next proposition shows that the phytoplankton-nutrient system \eqref{main2} is persistent and has a unique invariant probability measure.
We will use subscripts in $\E_{x,y}$ to indicate initial values of equation \eqref{main2}.
\begin{prop}\label{prop2}
		Let $(\BX,  \BY)$ be the solution to \eqref{main2}. If $\lambda_1>0$ then for sufficiently small $\theta>0$ there exist constants $K_\theta, \gamma_\theta>0$ such that
\begin{equation}\label{e0-prop2}
			\E_{x,y} [ (\BY(t))^{-\theta}]\leq K_\theta e^{-\gamma_\theta t}y^{-\theta} + K_\theta,\;\forall x\geq 0, y>0.
\end{equation}
As a result of the nondegeneracy of the diffusion process $(\BX(t),\BY(t))$,
	there exists a unique invariant measure $\mu_{12}$ of $(\BX(t),\BY(t))$  on $\R_{+}^{2,\circ}$.
\end{prop}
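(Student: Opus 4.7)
The proof splits naturally into two parts: establishing the negative-moment estimate \eqref{e0-prop2}, and deducing existence and uniqueness of $\mu_{12}$ from it. For the first part, I would build an augmented Lyapunov function of the form $V(x,y) = y^{-\theta}\exp(\theta\psi(x))$, where the corrector $\psi$ averages out the nutrient fluctuations. Specifically, let $\psi\colon(0,\infty)\to\R$ solve the Poisson equation for the generator of the nutrient-only process $\hat X$ from \eqref{main11},
\[
\tfrac{\sigma_1^2 x^2}{2}\psi''(x) + (\Lambda - \alpha_1 x)\psi'(x) \;=\; F_1(x,0)x - \int_0^\infty F_1(u,0)u\,\mu_1(du).
\]
The right-hand side has zero $\mu_1$-mean by construction, so the equation admits a solution; using the explicit inverse-Gamma form of $\mu_1$ and a standard scale-function representation, one controls the growth of $\psi$ and $\psi'$ at both endpoints. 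Since $\lambda_1 > 0$, I also pick $\theta > 0$ so small that $\lambda_1(\theta) := \int F_1(u,0)u\,\mu_1(du) - \alpha_2 - (\theta+1)\sigma_2^2/2 \geq \lambda_1/2$.

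A direct It\^o computation---using independence of $W_1$ and $W_2$ to suppress cross terms, and the Poisson equation to cancel the pure-$\hat X$ part---then yields
\[
\op V(x,y) \;=\; \theta V(x,y)\Bigl[-\lambda_1(\theta) + \bigl(F_1(x,0)-F_1(x,y)\bigr)x + R_\theta(x,y)\Bigr],
\]
where $R_\theta(x,y) = \psi'(x)\bigl(\alpha_4 y - F_1(x,y)xy\bigr) + \tfrac{\theta\sigma_1^2 x^2}{2}(\psi'(x))^2$. Assumption \ref{asp-1}(2) gives $|(F_1(x,0)-F_1(x,y))x| \leq Ly$, while $R_\theta$ is of order $\theta + y$ on any compact $x$-interval. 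By choosing $\theta$ small, cutting off at a sufficiently large compact set, and absorbing the tail contribution via the polynomial moment bound \eqref{e1-thm1}, I obtain the global Lyapunov inequality
\[
\op V(x,y) \;\leq\; -\gamma_\theta V(x,y) + K_\theta
\]
for some $\gamma_\theta, K_\theta > 0$. Applying Dynkin's formula with the localizing stopping times $\tau_n := \inf\{t\geq 0 : \BY(t)\leq 1/n \text{ or } \BX(t)\geq n\}$, followed by Gronwall's lemma and a monotone-convergence passage $n\to\infty$, yields \eqref{e0-prop2} (up to redefining $K_\theta$ to absorb $\exp(\theta\psi)$).

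For the second part, \eqref{e0-prop2} combined with Theorem \ref{thm1} gives tightness of the time-averaged laws $\frac{1}{T}\int_0^T\PP_{x,y}((\BX(t),\BY(t))\in\cdot)\,dt$ on $\R^{2,\circ}_+$: tightness at infinity comes from \eqref{e1-thm1}, while tightness away from $\{y=0\}$ comes from $\frac{1}{T}\int_0^T\E_{x,y}\BY(t)^{-\theta}\,dt \leq 2K_\theta$ for large $T$. Krylov--Bogolyubov together with the Feller property from Theorem \ref{thm1} then produces an invariant measure $\mu_{12}$ on $\R^{2,\circ}_+$. Uniqueness follows from standard arguments: the diffusion matrix $\diag(\sigma_1^2 x^2,\sigma_2^2 y^2)$ is uniformly elliptic on every compact subset of $\R^{2,\circ}_+$, yielding the strong Feller property, and a Stroock--Varadhan support argument exploiting the independence of $W_1$ and $W_2$ gives irreducibility on $\R^{2,\circ}_+$. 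I expect the main obstacle to lie in step one, namely producing the Poisson corrector $\psi$ with sharp enough growth estimates: because $F_1(x,0)x$ may grow linearly while $\mu_1$ has only polynomial tails, $\psi'$ is unbounded, and closing the Lyapunov inequality requires carefully matching the quadratic remainder $\tfrac{\theta\sigma_1^2 x^2}{2}(\psi'(x))^2$ against the polynomial moments supplied by Theorem \ref{thm1}.
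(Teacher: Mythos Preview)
Your Poisson-corrector strategy is a natural idea and genuinely different from the paper's route, but the step ``obtain the global Lyapunov inequality $\op V\leq -\gamma_\theta V+K_\theta$'' does not close as written. Two contributions to $\op V/(\theta V)$ are unbounded and cannot be absorbed into a constant. First, the quadratic remainder $\tfrac{\theta\sigma_1^2x^2}{2}(\psi'(x))^2$: when $F_1(x,0)x$ grows linearly (e.g.\ $F_1\equiv a$, the basic case \eqref{main_det}), the right side of your Poisson equation is $\sim ax$, so $\psi'(x)\to -a/\alpha_1\neq 0$ and $x^2(\psi')^2\sim c\,x^2$ is unbounded. Second, the terms $[F_1(x,0)-F_1(x,y)]x$ and $\psi'(x)\bigl(\alpha_4 y-F_1(x,y)xy\bigr)$ are each $O(y)$, so $\op V$ contains a piece of order $y\cdot V=y^{1-\theta}e^{\theta\psi(x)}$, unbounded as $y\to\infty$. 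Invoking \eqref{e1-thm1} does not help: that is a moment estimate on $\BS(t)$, not a pointwise bound, and cannot convert $\op V(\bs)\leq C(1+|\bs|)V(\bs)$ into $\op V\leq -\gamma V+K$. You flag the difficulty honestly at the end, but give no mechanism to resolve it; repairing it would require truncating $\psi$ outside a compact $x$-interval \emph{and} augmenting $V$ by a polynomial $(1+x+y)^q$, which changes the argument substantially.

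The paper avoids pointwise control of $\op\,y^{-\theta}$ altogether and instead proves a discrete-time contraction $\E_{x,y}(\BY(n^*T))^{-\theta}\leq \kappa\,y^{-\theta}+\hat K$ with $\kappa<1$, then iterates. The contraction is assembled from three regimes via a stopping time $\eta$ and the strong Markov property: (a) for $(x,y)\in[0,H]\times(0,\delta_0]$ it bounds $\E_{x,y}(\BY(t))^{-\theta}/y^{-\theta}$ by the log-Laplace transform of $\int_0^t\wdt F_{1,M}(\BX,\BY)\,ds-(\alpha_2+\tfrac{\sigma_2^2}{2})t+\sigma_2W_2(t)$, where $\wdt F_{1,M}$ is a \emph{bounded} truncation of $F_1(u,v)u$, so a second-order Taylor expansion in $\theta$ applies uniformly; (b) for $x\geq H$ and $y$ small it uses the direct drift $d(\BY^{-\theta})\leq -2\theta\Delta_1\BY^{-\theta}\,dt+(\text{mart.})$, valid because $F_1(x,0)x$ is already large there; (c) elsewhere it uses only the crude bound $\E_{x,y}(\BY(t))^{-\theta}\leq e^{Ct}y^{-\theta}$. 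Your second part (tightness via \eqref{e1-thm1} and \eqref{e0-prop2}, Krylov--Bogolyubov, uniqueness from nondegeneracy) is correct.
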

Therefore, if $\lambda_1>0$, we can define the invasion rate of the zooplankton into $\mu_{12}$ via $$\lambda_2:=\int_{\R_{12+}^{\circ}} F_2(u,v)u\mu_{12}(dudv)-\alpha_3-\frac{\sigma_3^2}2.$$
The normalized random occupation measure is given by
$$\wdt\Pi_t^\bs(\cdot):=\frac{1}{t}\int_0^t \1_{\{\BS(u)\in\cdot\}}\,du,$$
where the superscript $\bs$ indicates the corresponding initial condition.
Finally, we are able to show that if $\lambda_1>0$ then the sign of $\lambda_2$ determines the extinction/persistence of the zooplankton.

\begin{thm}\label{thm3}
If $\lambda_1>0$ and $\lambda_2<0$ then, for any $\bs\in\R^{3,\circ}_{+}$ with probability one
	\begin{equation}\label{thm3-e1}
		\lim_{t\to\infty}\frac{\ln Z(t)}t=\lambda_2
	\end{equation}
and with probability one the family of normalized random occupation measures $(\wdt\Pi_t^\bs)_{t> 0}$ converges weakly to $\mu_{12}$.
\end{thm}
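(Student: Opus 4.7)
The plan is to first show that the occupation measures $\wdt\Pi_t^\bs$ converge weakly to $\mu_{12}$ almost surely, and then extract the pathwise extinction rate of $Z(t)$ from this. The starting point is It\^o's formula on $\ln Z(t)$, which gives
\begin{equation*}
\frac{\ln Z(t) - \ln Z(0)}{t} = \frac{1}{t}\int_0^t F_2(Y(s), Z(s)) Y(s)\,ds - \alpha_3 - \frac{\sigma_3^2}{2} + \frac{\sigma_3 W_3(t)}{t},
\end{equation*}
and the SLLN for Brownian motion kills the last term a.s. Since $F_2(\cdot,\cdot)u$ is Lipschitz by Assumption \ref{asp-1}(2), we also have $|F_2(Y, Z) Y - F_2(Y, 0) Y| \leq L Z$, so the rate is determined by the asymptotic time-average of $F_2(Y, 0) Y$, which is a functional of $(X, Y)$ alone.

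For the weak convergence I would combine the moment bound \eqref{e1-thm1} with Prokhorov to get a.s.\ tightness of $(\wdt\Pi_t^\bs)_{t>0}$, and then use the Markov-Feller property from Theorem \ref{thm1} together with a standard Krylov-Bogoliubov argument to show that any subsequential weak limit $\Pi^*$ is an invariant probability measure of $\BS$. The key step is to show $\Pi^*=\mu_{12}$. Decompose $\Pi^*$ into ergodic components: those supported on $\{z = 0\}$ are either $\mu_1 \otimes \delta_{(0,0)}$ on the $X$-axis or $\mu_{12}$ (by Proposition \ref{prop2}), and I must rule out any interior ergodic component $\nu$ on $\R^{3,\circ}_+$. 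For such a $\nu$, stationarity of $\ln Z$ forces
\begin{equation*}
\int \bigl[F_2(y,z) y - \alpha_3 - \sigma_3^2/2\bigr]\,d\nu(x,y,z) = 0.
\end{equation*}
Coupling $(X, Y, Z)$ with the boundary process $(\bar X, \bar Y)$ driven by the same $W_1, W_2$ and exploiting the Lipschitz estimates of Assumption \ref{asp-1}(2), the projection of $\nu$ to the $(x, y)$-plane must equal $\mu_{12}$ up to an $O(z)$ error; letting this perturbation vanish yields $\int F_2(v, 0) v\, d\mu_{12}(u, v) = \alpha_3 + \sigma_3^2/2$, i.e.\ $\lambda_2 = 0$, contradicting $\lambda_2 < 0$. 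The trivial ergodic component on the $X$-axis is excluded analogously using $\lambda_1 > 0$ and Proposition \ref{prop2}. Since the weak limit is unique along every subsequence, $\wdt\Pi_t^\bs \Rightarrow \mu_{12}$ a.s.

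Having established this convergence, $\Pi^*(\{z = 0\}) = 1$ together with the moment bound \eqref{e1-thm1} gives $Z(t) \to 0$ a.s., so $t^{-1}\int_0^t Z(s)\,ds \to 0$ by Ces\`aro. Uniform integrability of the map $(x,y,z)\mapsto y$ under $\wdt\Pi_t^\bs$ (from \eqref{e1-thm1} with $q > 1$) lets one pass the weak limit through the integral:
\begin{equation*}
\frac{1}{t}\int_0^t F_2(Y(s), 0) Y(s)\,ds \longrightarrow \int F_2(v, 0) v\, \mu_{12}(du\, dv) = \lambda_2 + \alpha_3 + \frac{\sigma_3^2}{2}\ \text{a.s.}
\end{equation*}
Substituting back into the It\^o decomposition yields $\ln Z(t)/t \to \lambda_2$ a.s. The main obstacle is the exclusion of interior ergodic invariant measures in the middle paragraph: the lack of dissipativity caused by the recycling terms $\alpha_4 Y, \alpha_5 Z$ prevents a clean Foster-Lyapunov argument, and I expect the proof to rely crucially on the reciprocal moment bound \eqref{e0-prop2} from Proposition \ref{prop2} to control the logarithmic singularity of $\ln Y$ near $\{y = 0\}$ when quantifying the closeness of $(X, Y)$ to $(\bar X, \bar Y)$ under the above coupling.
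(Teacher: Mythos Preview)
Your proposal has a genuine gap precisely at the point you flag as the main obstacle: the exclusion of an interior ergodic invariant measure $\nu$. The argument you sketch---that the $(x,y)$-marginal of $\nu$ equals $\mu_{12}$ ``up to an $O(z)$ error'' and that one can then ``let this perturbation vanish''---is not valid as stated. The measure $\nu$ is fixed; there is no parameter to send to zero, and a naive coupling of $(X,Y,Z)$ with the boundary process $(\bar X,\bar Y)$ driven by the same Brownian motions does not force the marginals to agree. Even if one tries to bound $|X(t)-\bar X(t)|+|Y(t)-\bar Y(t)|$ in terms of $\int_0^t Z(s)\,ds$, this comparison is only useful on a finite horizon: without dissipativity the two processes drift apart, and the error is not controlled on $[0,\infty)$. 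Nothing in your sketch actually forces $\int F_2(y,z)y\,d\nu$ to equal $\int F_2(v,0)v\,d\mu_{12}$.

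The paper takes a substantially different route, and its Section~\ref{s:sketch} explains why: the random occupation measures are not known to be tight almost surely here (your appeal to \eqref{e1-thm1} plus Prokhorov gives tightness of the \emph{expected} occupation measures $\check\Pi_t^\bs$, not of $\wdt\Pi_t^\bs$ pathwise), so the strategy you outline---essentially that of \cite{HN16}---does not apply. Instead the paper constructs a coupling \eqref{coup1} in which $(\wdt X,\wdt Y,\wdt Z)$ carries an \emph{extra drift} pulling it toward $(\bar X,\bar Y)$, and then removes that drift via a Girsanov change of measure. The key technical point is that the separation time is defined with an exponentially decaying threshold, $\wdt\tau_\delta=\inf\{t:\wdt Z(t)\ge\delta e^{-\gamma_0 t}\}$, and one shows $\wdt\tau_\delta=\infty$ with high probability (Proposition~\ref{prop2.1} and Lemma~\ref{lm2}); this is where the reciprocal moment bound \eqref{e0-prop2} is actually used, via Lemma~\ref{lm1}, to control $\wdt X^{-1},\wdt Y^{-1}$ and hence the Girsanov density. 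The output is that $\ln Z(t)/t\to\lambda_2$ with large probability from a neighborhood of the $z=0$ face, which immediately rules out interior invariant measures. Only then does the paper pass to the expected occupation measures $\check\Pi_t^\bs$ (tight by \eqref{e1-thm1}) and exclude the $\bmu_1$ component by observing that a positive $\bmu_1$-weight would force $\E_\bs\ln Y(t_k)\to+\infty$, contradicting \eqref{e1-thm1}. The weak convergence of the \emph{random} occupation measures to $\mu_{12}$ is then obtained inside the coupling argument (see the end of the proof of Lemma~\ref{lm2}), not as a starting point.
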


\begin{thm}\label{thm4}
 If $\lambda_1>0$ and $\lambda_2>0$ then there exists a unique invariant measure $\mu^\circ$ on $\R^{3,\circ}_+$.
 Furthermore,
 % and $\wdt q>1$
 %such that
 for any $\bs\in\R^{3,\circ}_{+}$
	\begin{equation}\label{thm4-e1}
		\lim_{t\to\infty}t^{\wdt q-1}\|P_t(\bs,\cdot)-\mu^\circ(\cdot)\|_{TV}=0,\;\forall 1\leq \wdt q<q_0,
	\end{equation}
where $\|\cdot\|$ is the total variation metric and $P_t(\bs,\cdot)=\PP_\bs(\BS(t)\in\cdot)$ is the transition probability of the process $\BS(t)$.
\end{thm}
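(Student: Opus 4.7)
The plan is to establish polynomial ergodicity of $\BS$ on $\R^{3,\circ}_+$ by combining a Foster-Lyapunov drift inequality with a local Doeblin minorization, and then invoking a subgeometric Harris-type theorem (for instance that of Douc-Fort-Guillin, or Hairer-Mattingly in subgeometric form). I would work with a Lyapunov function of the form
\[
V(x,y,z) = (1+x+y+z)^{q} + \beta_1\, y^{-\theta_1} + \beta_2\, z^{-\theta_2},
\]
for some $1<q<q_0$, small $\theta_1,\theta_2>0$, and positive weights $\beta_1,\beta_2$. The polynomial term is controlled at infinity by \eqref{e1-thm1}, while the negative-moment terms blow up at the extinction boundaries $\{y=0\}$ and $\{z=0\}$, forcing the process to spend most of its time away from them.

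The delicate step is controlling $y^{-\theta_1}$ and $z^{-\theta_2}$ near the boundaries, since the instantaneous generator acting on these functions is not pointwise negative. I would use an averaged/integrated argument. When $y$ is small (and $(x,z)$ stays in a bounded set over a fixed time window), the $X$-dynamics are well approximated by the auxiliary equation \eqref{main11}, so by ergodicity of $\hat X$ with invariant measure $\mu_1$ the time-averaged drift of $\ln Y$ is close to $\lambda_1>0$. An It\^o expansion of $Y(T)^{-\theta_1}$ on a large but fixed horizon $T>0$, together with smallness of $\theta_1$, then yields
\[
\E_\bs\bigl[Y(T)^{-\theta_1}\bigr] \leq (1-\eta_1)\, y^{-\theta_1} + C_1, \qquad \text{for sufficiently small } y,
\]
for some $\eta_1\in(0,1)$. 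The analogous bound for $z^{-\theta_2}$ uses that when $z$ is small, $(X,Y)$ approximates the phytoplankton-nutrient system $(\BX,\BY)$ of Proposition \ref{prop2}, whose invariant measure $\mu_{12}$ makes the time-averaged drift of $\ln Z$ equal to $\lambda_2>0$.

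Combining these boundary inequalities with the polynomial moment bound \eqref{e1-thm1}, I would assemble a multi-step drift inequality
\[
\E_\bs V(\BS(T)) - V(\bs) \leq -\kappa\, V(\bs)^{1-1/q} + b\, \mathbf{1}_K(\bs),
\]
for some compact $K\subset\R^{3,\circ}_+$ and $\kappa,b>0$, which is exactly the sub-linear drift condition producing the polynomial rate $t^{\tilde q-1}$. The local minorization follows from full-rank noise: H\"ormander's theorem and a standard support/controllability argument show that $P_T(\bs,\cdot)$ has a jointly continuous, strictly positive density on $\R^{3,\circ}_+$, hence $P_T(\bs,\cdot)\geq c\,\nu(\cdot)$ uniformly for $\bs$ in any compact subset of $\R^{3,\circ}_+$. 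The subgeometric Harris theorem then yields the unique invariant measure $\mu^\circ$ on $\R^{3,\circ}_+$ and the convergence in \eqref{thm4-e1}.

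The main obstacle is the non-dissipativity emphasized in the abstract: there is no intraspecific self-regulation like $-bY^2$ or $-fZ^2$ to force an automatic coming-down-from-infinity, so the drift near infinity must be extracted from the implicit mass balance of $X+Y+Z$, which is dissipated only linearly by virtue of Assumption \ref{asp-1}(1). A second and arguably deeper difficulty is making rigorous the averaged drift for $y^{-\theta_1}$ and $z^{-\theta_2}$: the pointwise generator is not sign-controllable, and one must carefully couple the full three-dimensional dynamics to the boundary subsystems over a finite horizon $T$ while controlling the coupling error uniformly as $y\downarrow 0$ or $z\downarrow 0$, without the usual dissipativity crutch for handling rare excursions to infinity.
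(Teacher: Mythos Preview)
Your overall architecture (subgeometric drift plus local minorization via Jarner--Roberts/Douc--Fort--Guillin) is exactly what the paper uses, but your choice of Lyapunov function has a genuine gap. The additive form
\[
V(\bs)=(1+x+y+z)^q+\beta_1 y^{-\theta_1}+\beta_2 z^{-\theta_2}
\]
fails near the ``corner'' ergodic measure $\bmu_1=\mu_1\times\bdelta^*\times\bdelta^*$, i.e.\ in the regime where both $y$ and $z$ are small. There the drift of $\ln Z$ is $-\alpha_3-\sigma_3^2/2<0$ (zooplankton cannot survive without phytoplankton), so $\E_\bs[Z(T)^{-\theta_2}]$ \emph{grows} multiplicatively. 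Your proposed estimate for $z^{-\theta_2}$ relies on $(X,Y)$ being close to the support of $\mu_{12}$, which requires $y$ bounded away from $0$; it is simply false when $y$ is also small. In the region $z\ll y\ll 1$ the term $\beta_2 z^{-\theta_2}$ dominates $V$ and increases, while the compensating decrease of $\beta_1 y^{-\theta_1}$ is of lower order, so your multi-step drift inequality cannot hold.

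The paper resolves this by replacing the \emph{sum} of boundary penalties by a single scalar combination
\[
V(\bs)=x+y+z-\gamma_2\ln y-\gamma_3\ln z+c_1,
\]
and then working with $V^q+C_\diamond U^q$. The weights $\gamma_2,\gamma_3$ are chosen so that $\gamma_2\lambda_1-\gamma_3(\alpha_3+\sigma_3^2/2)>0$; at $\bmu_1$ the gain in $\gamma_2\ln Y$ beats the loss in $\gamma_3\ln Z$, while at $\bmu_{12}$ the $\ln Y$ contribution integrates to zero and the $\ln Z$ part contributes $\gamma_3\lambda_2>0$. Thus $\int\mathcal L V\,d\bmu\le -2\rho$ for \emph{every} boundary ergodic measure simultaneously (Lemma~\ref{lm3.1}), which is what drives the time-$T$ drift estimate in Proposition~\ref{lm3.3}. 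The missing idea in your proposal is precisely this: one must weight the boundary directions jointly so that the Lyapunov drift is negative at \emph{all} ergodic measures on $\partial\R^{3,\circ}_+$, not treat $y\to0$ and $z\to0$ as decoupled problems.
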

The complete characterization of the underlying system is summarized in the following table.
\begin{center}
	\begin{tabular}{|m{2.5cm}| m{12.5cm} |}
		\hline
		$\lambda_1<0$\newline
		(Theorem \ref{thm2}) & The phytoplankton $Y(t)$ and the zooplankton $Z(t)$ go extinct exponentially fast with probability $1$; the nutrient $X(t)$ converges weakly to the solution $\hat X(t)$ of \eqref{main11}.
		\\
		\hline
		$\lambda_1>0$, $\lambda_2<0$\newline (Theorem \ref{thm3}) & The zooplankton $Z(t)$ goes extinct exponentially fast with probability $1$; the nutrient-phytoplankton subsystem $(X(t),Y(t))$ converges weakly to the solution $(\bar X(t),\bar Y(t))$ of \eqref{main2}.
		 \\  \hline
		$\lambda_1>0$, $\lambda_2>0$\newline (Theorem \ref{thm4}) & Coexistence: the process $(X(t),Y(t),Z(t))$ has a unique invariant measure $\mu^\circ$ on $\R^{3,\circ}_+$, and the transition probability converges to $\mu^\circ$ with polynomial rate.  \\
		\hline
	\end{tabular}
\end{center}

\subsection{Sketch of proof, technical difficulties and novel approaches}\label{s:sketch}
	General results for extinction and persistence of Kolmogorov SDE systems appear in \cite{HN16}. However, those results cannot be applied to the nutrient-plankton model \ref{main}. This is because the dissipativity/boundedness condition \cite[(1.2) in Assumption 1.1]{HN16} is not satisfied for \ref{main}. This condition was used to prove that the process would return quickly into compact sets as well as the tightness of the random occupation measures. Because of this we had to develop new methods in order to get sharp extinction and persistence results.
We present the main ideas and difficulties for the proofs of Theorem \ref{thm3} and Theorem \ref{thm4} below.

The first ingredient in determining whether a species persists or goes extinct is looking at its long term growth rate at small densities. This is sometimes called the invasion rate. It turns out that these invasion rates can be computed as the external Lyapunov exponents, i.e. the log-growth rates averaged with respect to certain invariant measures which are supported on the boundary; see \cite{HN16, HNS21} for an exposition of the concept of invasion rate. For our models, the key invasion rates are $\lambda_1$ and $\lambda_2$ and we can show that extinction/persistence of the phytoplankton and the zooplankton is determined by the signs of $\lambda_1$ and $\lambda_2$.
Due to the lack of boundedness/dissipativity, we cannot obtain an exponential convergence rate in the case $\lambda_1>0$ and $\lambda_2>0$. Instead, we follow the techniques from \cite{benaim2022stochastic} to obtain a polynomial rate of convergence.

The hardest part is proving the extinction result (Theorem \ref{thm3}). Without the tightness of the family of random occupation measures $(\wdt\Pi_t^\bs(\cdot))_{t>0}$, the methods from \cite{HN16} do not work. We develop a new coupling method to compare the solution near the boundary (when $Z(t)$ is small) and the solution on the boundary (when $Z(t)=0$). While the comparison in a finite interval is standard, it is not sufficient to obtain the desired result which requires the two solutions to be close with a large probability in the infinite interval $[0,\infty)$.
	In order to overcome this obstacle, we construct a coupled system $(X(t), Y(t), \bar X(t), \bar Y(t), \bar Z(t))$ where $(X(t), Y(t))$ is the solution on the boundary $(Z(t)=0))$ and
	$(\bar X(t), \bar Y(t), \bar Z(t))$ has initial value close enough to the initial value of $(X(t), Y(t), Z(t))$. The process $(\bar X(t), \bar Y(t), \bar Z(t))$ after a change of measure is the solution to \eqref{main} up to a "separating" time $\tau$ and we show that the separating time is infinity with a large probability.

	Standard coupling methods often define $\tau$ as $\tau:=\inf\{t\geq 0: \bar Z(t)\geq \delta\}$ for some small $\delta$. However, this definition does not work on an infinite interval.
	Instead, we will define the ``separation" time $\tau$ as the time $Z(t)$ exceeds an exponential decay $\tau:=\inf\{t\geq0: \bar Z(t)\geq \delta e^{-\gamma_0 t}\}$. With this definition, it becomes much more difficult to show that $\tau=\infty$ with a large probability.
	The idea to tackle this difficulty is based on the strong correlation between $|X(t)-\bar X(t)|+|Y(t)-\bar Y(t)|$ being
	small and $\bar Z(t)$ decaying exponentially fast. If $|X(t)-\bar X(t)|+|Y(t)-\bar Y(t)|$ is small for a long time then
	$\bar Z(t)$ is still bounded by an exponential decay and when $\bar Z(t)$ is bounded by an exponential
	decay, one can establish a good bound for $|X(t)-\bar X(t)|+|Y(t)-\bar Y(t)|$ for the infinite interval $[0,\infty)$.	
\section{Proofs of Theorems \ref{thm1} and \ref{thm2}}\label{s:ext_1}

	\begin{proof}[Proofs of Theorem \ref{thm1}]
		 The existence and uniqueness of solutions can be proved similarly to \cite[Appendix B]{xu2016competition}. The proof for the Markov-Feller property of $(\BS(t))$ can be found in \cite{nguyen2017certain}.
	Therefore, the following is devoted to proofs of \eqref{e1-thm1}, \eqref{e2-thm1}, and \eqref{e3-thm1}.
		
%		Let $\hat V(\bs)=(1+x+y+z)^q$, we have
%		\begin{align*}\op \hat V(\bs)=&\hat V_x(\bs)\big(\Lambda-F_1(x,y)xy-\alpha_1 x+\alpha_4 x+\alpha_4 z\big)\\
%			&+V_y(\bs)\big(F_1(x,y)xy-F_2(y,z)yz-\alpha_2y\big)\\
%			&+V_z(\bs)\big(F_2(y,z)yz-\alpha_3z \big)
%		\end{align*}
%		Therefore, by direct computations,  there exists $C_q, \hat D_q $ satisfying
%		$$\Lom (1+s+v+i+r)^{1+q}\leq C_q +\hat D_q(1+s+v+i+r)^{1+q}.$$
%		Moreover, when $q$ is sufficiently small, $\hat D_q:=-D_q$.
%		With this estimate, by applying  It\^o's formula, one has \eqref{e1-thm1}, \eqref{e2-thm1} and \eqref{e3-thm1}; see \cite{mao2002environmental} or \cite{mao2003asymptotic} for this well-known argument.
Denote $\alpha_0:=\frac13\min\{\alpha_1,\alpha_2-\alpha_4,\alpha_3-\alpha_5\}$, and	let $q_0\in(1,2)$ be such that
	$(q_0-1)(\sigma_1^2\vee\sigma_2^2\vee\sigma_3^2)\leq \alpha_0$. Define
	$U^q(\bs)=(1+x+y+z)^q$. For $0<q\leq q_0$,
	we have
	\begin{equation}\label{eq-LUU}
	\begin{aligned}\op U^q(\bs)=&[U^q]_x(\bs)\big(\Lambda-F_1(x,y)xy-\alpha_1 x+\alpha_4 y+\alpha_5 z\big)\\
		&+[U^q]_y(\bs)\big(F_1(x,y)xy-F_2(y,z)yz-\alpha_2y\big)\\
		&+[U^q]_z(\bs)\big(F_2(y,z)yz-\alpha_3z \big)\\
		&+[U^q]_{xx}(\bs)\frac{\sigma_1^2x^2}2+[U^q]_{yy}(\bs)\frac{\sigma_2^2y^2}2+[U^q]_{zz}(\bs)\frac{\sigma_3^2z^2}2\\
		\leq &q\big(\Lambda-\alpha_1x-(\alpha_2-\alpha_4)y-(\alpha_3-\alpha_5)z\big)(1+x+y+z)^{q-1}\\
		&+\frac{q(q-1)}2(1+x+y+z)^{q-2}(\sigma_1^2x^2+\sigma_2^2y^2+\sigma_3^2y^2)		
			\\
			\leq& q\left(\Lambda(1+x+y+z)^{q-1}-2\alpha_0(1+x+y+z)^{q}\right).
%			\\
%			\leq& C_{q}-q\alpha_0\check V(\bs) \,\forall \bs\in\R^3_+ \text{ for some constant } C_{q}>0.
		\end{aligned}
		\end{equation}
Since for any $0\leq q\leq q_0$, $$C_q:=\sup_{\bs=(x,y,z)\in\R_+^3}q\left(\Lambda(1+x+y+z)^{q-1}-(2-q)\alpha_0(1+x+y+z)^{q}\right)<\infty,$$ we obtain that
\begin{equation}\label{eq-LU}
\op U^q(\bs)\leq C_{q}-q\alpha_0U^q(\bs), \,\forall \bs\in\R^3_+.
\end{equation}
Let $\bar \tau_n=\inf\{t\geq 0:U(\BS(t))\geq n\}$.
Because of \eqref{eq-LU} and It\^o's formula, we have
	\begin{equation}\label{eq-222}
	\begin{aligned}
	\E_\bs e^{\alpha_0 (t\wedge\bar\tau_n)}U^q(\BS(t\wedge\bar\tau_n))\leq& U^q(\bs)+\E_\bs\left(\int_0^{t\wedge\bar\tau_n}C_{q_0}e^{q\alpha_0s}ds\right)\\
	\leq& U^q(\bs)+ C_{q_0}\int_0^te^{\alpha_0s}ds\\
\leq& U^q(\bs)+ \frac{C_{q_0}}{\alpha_0}e^{\alpha_0t}.
	\end{aligned}
\end{equation}
Dividing both sides of \eqref{eq-222} by $e^{q\alpha_0 t}$ and letting $n\to\infty$, we obtain \eqref{e1-thm1}.

Similarly, with some elementary estimates as the process of getting \eqref{eq-LUU}, we have
\begin{equation}\label{eq-LU2}
[\op U^2](\bs)\leq \bar K\, U^2, \forall \bs\in\R^3_+.
\end{equation}
Thus, from \eqref{eq-LU2} and Dynkin's formula, we get \begin{equation}\label{e6-thm1}
	\begin{aligned}
	\E_\bs e^{-\bar K t}U^2(\BS(t\wedge\bar\tau_n))\leq 	\E_\bs e^{-\bar K (t\wedge\bar\tau_n)}U^2(\BS(t\wedge\bar\tau_n))\leq& U^2(\bs).
	\end{aligned}
\end{equation}
Letting $n\to\infty$, we can derive \eqref{e2-thm1} from Lebesgue’s dominated convergence theorem. \eqref{e3-thm1} can also be obtained easily from \eqref{e6-thm1}.
\end{proof}

The remaining of this section is devoted to the proof of Theorem \ref{thm2}. We start with the following auxiliary Lemmas \ref{lm4} and \ref{lm5}, and Proposition \ref{prop3}. The first lemma establishes some estimates (in probability) for $\ln Y(t)$ and $\ln Z(t)$ in finite time intervals given initial conditions belonging in a compact set. The second lemma states the ergodicity of the process on the boundary corresponding $y=z=0$.
Proposition \ref{prop3} will show that if $\lambda_1<0$ and solutions start with small $Y(0)$ and $Z(0)$, then $Y(t)$ and $Z(t)$ converges to $0$ (exponentially fast) with large probability.
\begin{lm}\label{lm4}
For any $\eps>0$, $H>0$ and $T>0$, there exists $K_{\eps,H, T}>0$ such that
$$
\PP_{\bs}\left\{|\ln Z(t)-\ln z|\vee |\ln Y(t)-\ln y|\leq K_{\eps, H, T},\;\forall  0\leq t\leq T\right\}\geq 1-\eps \text{ if } \bs\in[0,H]\times(0,1)^2.
$$
	\end{lm}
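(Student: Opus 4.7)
The plan is to take logarithms via Itô's formula and then bound the drift using Theorem \ref{thm1}\eqref{e3-thm1} and the martingale part using a standard Brownian estimate.

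Since $y, z > 0$, Theorem \ref{thm1} guarantees $Y(t), Z(t) > 0$ for all $t \geq 0$ almost surely, so $\ln Y$ and $\ln Z$ are well-defined. Applying Itô's formula to the SDEs for $Y$ and $Z$ gives
\begin{align*}
\ln Y(t) - \ln y &= \int_0^t \Bigl[F_1(X(u),Y(u))X(u) - F_2(Y(u),Z(u))Z(u) - \alpha_2 - \tfrac{\sigma_2^2}{2}\Bigr]\,du + \sigma_2 W_2(t), \\
\ln Z(t) - \ln z &= \int_0^t \Bigl[F_2(Y(u),Z(u))Y(u) - \alpha_3 - \tfrac{\sigma_3^2}{2}\Bigr]\,du + \sigma_3 W_3(t).
\end{align*}
Because $F_1, F_2$ are bounded by $L$ (Assumption \ref{asp-1}(2)), each integrand is dominated in absolute value by $L(X(u)+Y(u)+Z(u))+\alpha_i+\sigma_i^2/2$, i.e.\ by an affine function of $(X,Y,Z)$.

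Next, I would invoke \eqref{e3-thm1} with parameters $(\eps/2,\,H+2,\,T)$, which applies since $|\bs| \leq H + 2$ whenever $\bs \in [0,H]\times(0,1)^2$. This yields a constant $\wdt K = \wdt K(\eps/2, H+2, T)$ and an event $\Omega_1$ with $\PP_\bs(\Omega_1) \geq 1 - \eps/2$ on which $X(t) + Y(t) + Z(t) \leq \wdt K$ for every $t \in [0,T]$. On $\Omega_1$ both drift integrals above are uniformly bounded on $[0,T]$ by a deterministic constant $C = C(L, \wdt K, \alpha_2, \alpha_3, \sigma_2, \sigma_3, T)$. Simultaneously, by Doob's maximal inequality (or the reflection principle), for each $i \in \{2,3\}$ one can choose $M_i = M_i(\eps, T)$ with $\PP\{\sup_{0 \leq t \leq T} |\sigma_i W_i(t)| \leq M_i\} \geq 1 - \eps/4$. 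A union bound over these three events then gives the claim with $K_{\eps,H,T} := C + M_2 + M_3$.

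There is no serious obstacle: once Theorem \ref{thm1} is in hand, this lemma is routine. The only point to check is that the potentially unbounded drift terms $F_1(X,Y)X$, $F_2(Y,Z)Y$ and $F_2(Y,Z)Z$ are controlled as soon as $X, Y, Z$ themselves stay bounded on $[0,T]$ with high probability, which is precisely what \eqref{e3-thm1} provides.
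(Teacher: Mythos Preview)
Your proof is correct and follows essentially the same approach as the paper: apply It\^o's formula to $\ln Y$ and $\ln Z$, control the drift integrals via the boundedness of $F_1,F_2$ together with the high-probability bound \eqref{e3-thm1} on $X+Y+Z$, and control the Brownian terms by a standard supremum estimate, then combine with a union bound. The only cosmetic difference is that the paper phrases the drift bound directly in terms of $F_1(X,Y)X+F_2(Y,Z)Y$ rather than first bounding $X+Y+Z$ and then using $|F_i|\leq L$; your formulation is arguably a bit cleaner since it also makes explicit the control of the $F_2(Y,Z)Z$ term appearing in the $\ln Y$ drift.
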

\begin{proof}
In view of \eqref{e3-thm1}, there exists $K_1:=K_1(\eps, H, T)$ such that
\begin{equation}\label{e2-lm4}
\PP_{\bs} \left\{F_1(X(t), Y(t))X(t)+F_2(Y(t), Z(t))Y(t)\leq K_1\text{ for all } 0\leq t\leq T\right\}\geq 1-\frac\eps2 \text{ given } \bs\in[0,H]\times(0,1)^2.
\end{equation}
Moreover, there is $K_2:=K_2(\eps, T)>0$ such that
\begin{equation}\label{e3-lm4}
	\PP \left\{|\sigma_2W_2(t)|+|\sigma_3W_3(t)|\leq K_2\text{ for all } 0\leq t\leq T\right\}\geq 1-\frac\eps2.
\end{equation}
On the other hand, we deduce from It\^o's formula that
\begin{equation}\label{e4-lm4}
	\begin{cases}
		\ln Y(t)-\ln y=&\int_0^t F_1(X(s), Y(s))X(s)ds-\int_0^t F_2(Y(s), Z(s))Z(s)ds-(\alpha_2+\frac{\sigma_2^2}2)t+\sigma_2W_2(t),\\
		\ln Z(t)-\ln z=&\int_0^t F_2(Y(s), Z(s))Y(s)ds-(\alpha_3+\frac{\sigma_3^2}2)t+\sigma_3W_3(t).
	\end{cases}
\end{equation}
Applying \eqref{e2-lm4} and \eqref{e3-lm4} into \eqref{e4-lm4} we can easily obtain the desired result.
\end{proof}
\begin{lm}\label{lm5}
	For any $\theta>0$, any initial condition $x\geq 0$, there exists a unique solution ${\hat X}^\theta_x(t)$ to
	\begin{equation}\label{main-theta}
			d{\hat X}^\theta(t)=[\Lambda+\theta- \alpha_1{\hat X}^\theta(t)]dt+\sigma_1 {\hat X}^\theta(t)dW_1(t),\quad {\hat X}^\theta(0)=x.
	\end{equation}
The solution process ${\hat X}^\theta$ has a unique invariant probability measure $\mu^\theta$ on $[0,\infty)$, which is an inverse Gamma distribution with density $g_\theta(u)= \frac{\beta_\theta^\alpha}{\Gamma(\alpha)} u^{-\alpha-1}\exp\left(-\frac{\beta_\theta}{u}\right), u>0$, with $\alpha=1+2\frac{\alpha_1}{\sigma_1^2}$ and $\beta_\theta=\frac{2(\Lambda+\theta)}{\sigma_1^2}$. In particular $\int_{[0,\infty)}u\mu^\theta(du)=\frac{\beta_\theta}{\alpha-1}=\frac{\Lambda+\theta}{\alpha_1}$. Furthermore, $r(u)=u^{q_0}$ is $\mu^\theta$-integrable.

Note that $\mu^0=\mu_1$, is the unique invariant probability measure of \eqref{main11}. Define
$$
\ell_\theta:=\int_{[0,\infty)}F_1(u,0)u\mu^\theta(du),
$$
then we have
\begin{equation}\label{eq}
\lim_{\theta\to 0^+}\ell_\theta=\int_{[0,\infty)}F_1(u,0)u\mu_1(du)=\ell_0=\lambda_1+\alpha_2+\frac{\sigma_2^2}{2}.
\end{equation}
\end{lm}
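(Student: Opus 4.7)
Existence and uniqueness of a global non-negative solution to \eqref{main-theta} are standard: the drift $\Lambda+\theta-\alpha_1 x$ and diffusion $\sigma_1 x$ are globally Lipschitz on $[0,\infty)$, the linear growth precludes finite-time explosion (as checked directly via $V(x)=1+x$, which gives $\op V\le \Lambda+\theta-\alpha_1 x$), and since the drift at $0$ is positive while the diffusion vanishes, a comparison argument confirms that $\hat X^\theta(t)\ge 0$ for all $t$ and $\hat X^\theta(t)>0$ for $t>0$.

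To identify the invariant measure I would apply the classical formula for the stationary density of a one-dimensional diffusion on $(0,\infty)$, namely $g(u) \propto \sigma^{-2}(u)\exp\bigl(\int 2b(y)/\sigma^2(y)\,dy\bigr)$. With $b(y)=\Lambda+\theta-\alpha_1 y$ and $\sigma^2(y)=\sigma_1^2 y^2$, the integrand equals $2(\Lambda+\theta)/(\sigma_1^2 y^2) - 2\alpha_1/(\sigma_1^2 y)$, whose antiderivative $-\beta_\theta/y - (\alpha-1)\ln y$, together with the $u^{-2}$ prefactor, produces $g_\theta(u)\propto u^{-\alpha-1}e^{-\beta_\theta/u}$. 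The substitution $v=\beta_\theta/u$ evaluates the normalizing integral to $\Gamma(\alpha)\beta_\theta^{-\alpha}$ and gives the stated inverse Gamma density. Uniqueness of the invariant measure follows from ellipticity of the generator on $(0,\infty)$ together with positive recurrence (delivered by the same Lyapunov function $V(x)=1+x$, which satisfies a Foster--Lyapunov inequality $\op V \le -\alpha_1 V + (\Lambda+\theta+\alpha_1)$).

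The mean identity $\int u\,\mu^\theta(du)=(\Lambda+\theta)/\alpha_1 = \beta_\theta/(\alpha-1)$ is obtained by taking expectations in \eqref{main-theta} and invoking stationarity. For the $q_0$-integrability of $r(u)=u^{q_0}$, the factor $e^{-\beta_\theta/u}$ dominates the small-$u$ behavior while the large-$u$ tail integrates provided $q_0<\alpha=1+2\alpha_1/\sigma_1^2$; this is enforced by the choice $(q_0-1)(\sigma_1^2\vee\sigma_2^2\vee\sigma_3^2)\le \alpha_0 \le \alpha_1/3$ made at the start of the proof of Theorem \ref{thm1}, which yields $q_0-1 \le \alpha_1/(3\sigma_1^2) < 2\alpha_1/\sigma_1^2$.

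For the continuity $\ell_\theta\to\ell_0$ as $\theta\to 0^+$, I would exploit the scaling structure of the inverse Gamma family: $\mu^\theta$ is the push-forward of the $\mathrm{InvGamma}(\alpha,1)$ law under $w\mapsto\beta_\theta w$, so since $\beta_\theta\to\beta_0$ continuously, $\mu^\theta\to\mu_1$ weakly. The same scaling gives $\int u^{q_0}\,\mu^\theta(du)=\beta_\theta^{q_0}\int w^{q_0}\,\mathrm{InvGamma}(\alpha,1)(dw)$, which is uniformly bounded for $\theta$ near $0$. Combining this with the bound $F_1(u,0)u\le Lu$ from Assumption \ref{asp-1}(2) provides uniform integrability of $u\mapsto F_1(u,0)u$ against the family $\{\mu^\theta\}$, and weak convergence plus uniform integrability yields $\ell_\theta\to\ell_0$. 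The final identity $\ell_0 = \lambda_1+\alpha_2+\sigma_2^2/2$ is just the definition of $\lambda_1$. The lemma is essentially a self-contained computation; the only subtle point is the uniform integrability required for the limit, which the explicit inverse Gamma structure handles cleanly via its scaling property.
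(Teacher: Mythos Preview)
Your proposal is correct and self-contained; the paper itself omits the proof entirely, deferring to \cite[Lemma 4.1]{nguyen2020long}. Your treatment of the continuity $\ell_\theta\to\ell_0$ via the scaling representation $\mu^\theta = (\beta_\theta\,\cdot)_*\mathrm{InvGamma}(\alpha,1)$ is a clean touch---it reduces the limit to dominated convergence against a fixed reference measure, which is arguably more transparent than arguing weak convergence plus uniform integrability separately.
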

\begin{proof}
	The proof is almost identical to that of \cite[Lemma 4.1]{nguyen2020long} and is therefore omitted.
\end{proof}

\begin{prop}\label{prop3}
	Suppose $\lambda_1<0$.
	For any $H>0$ and $\eps\in(0,1)$, there exists $\delta=\delta(\eps,H)>0$ such that
\begin{equation}\label{e0-prop3}
	\PP_{\bs}\left\{\lim_{t\to\infty}\frac{\ln Y(t)}{t}=\lambda_1 \text{ and } \lim_{t\to\infty}\frac{\ln Z(t)}{t}=-\alpha_3-\frac{\sigma_3^2}2\right\}\geq 1-\eps \text{ given } \bs\in[0,H]\times(0,\delta)^2.
\end{equation}
\end{prop}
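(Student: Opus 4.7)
The plan is to compare the nutrient process $X$ with the one-dimensional diffusion $\hat X^\theta$ of Lemma \ref{lm5} and exploit the ergodicity of $\hat X^\theta$ to drive $\ln Y(t)/t$ below $0$, then to verify that the comparison remains valid for all time with large probability. By Lemma \ref{lm5}, $\ell_\theta \to \ell_0 = \lambda_1+\alpha_2+\sigma_2^2/2$ as $\theta \to 0^+$, so I fix $\theta>0$ so small that $\ell_\theta + L\theta/\min(\alpha_4,\alpha_5) - \alpha_2 - \sigma_2^2/2 < \lambda_1/2 < 0$. Introduce the stopping time $\tau_\theta := \inf\{t\ge 0 : \alpha_4 Y(t)+\alpha_5 Z(t) \ge \theta\}$ and couple $\hat X^\theta$ to $X$ through the same $W_1$, with $\hat X^\theta(0)=x$. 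On $\{t<\tau_\theta\}$ the drift of $X$ is dominated by $\Lambda+\theta-\alpha_1 X$, so the SDE comparison principle yields $X(t)\le \hat X^\theta(t)$. Combining Assumption \ref{asp-1}(2) with Assumption \ref{asp-1}(3) (monotonicity of $u\mapsto F_1(u,0)u$) gives $F_1(X,Y)X \le F_1(\hat X^\theta,0)\hat X^\theta + L\theta/\alpha_4$ on $\{t<\tau_\theta\}$, and the It\^o identity \eqref{e4-lm4} then produces
\begin{equation*}
\ln Y(t) - \ln y \;\le\; \int_0^t F_1(\hat X^\theta(s),0)\hat X^\theta(s)\,ds + \left(\tfrac{L\theta}{\alpha_4}-\alpha_2-\tfrac{\sigma_2^2}{2}\right)t + \sigma_2 W_2(t) \quad \text{on } \{t<\tau_\theta\}.
\end{equation*}
By Birkhoff's theorem applied to the ergodic diffusion $\hat X^\theta$ (Lemma \ref{lm5}), $\frac{1}{t}\int_0^t F_1(\hat X^\theta,0)\hat X^\theta\,ds \to \ell_\theta$ a.s., so on $\{\tau_\theta=\infty\}$ I obtain $\limsup_{t\to\infty}\ln Y(t)/t \le \lambda_1/2 <0$. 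A parallel estimate for $\ln Z(t)$, using $F_2(Y,Z)Y\le L\theta/\alpha_4$ on $\{t<\tau_\theta\}$, gives exponential decay of $Z(t)$ at rate at least $L\theta/\alpha_4 -\alpha_3-\sigma_3^2/2<0$.

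The main obstacle is to prove $\PP_\bs\{\tau_\theta=\infty\}\ge 1-\eps$ whenever $\bs \in [0,H]\times(0,\delta)^2$ and $\delta$ is small enough. I would argue by a two-stage bootstrap. Stage one: choose $T>0$ large enough that, by the pointwise ergodic theorem and $\sigma_2 W_2(T)/T \to 0$, the right-hand side of the display is bounded above by $\ln y + (\lambda_1/4)T$ on an event $A_T$ with $\PP(A_T)\ge 1-\eps/8$; this forces $Y(T) \le y\, e^{(\lambda_1/4)T}$ on $A_T\cap\{\tau_\theta>T\}$. Stage two: apply Lemma \ref{lm4} on $[0,T]$ to choose $\delta$ so small that $\{\tau_\theta>T\}$ has probability at least $1-\eps/8$. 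Combining, with probability at least $1-\eps/4$ we have both $\tau_\theta>T$ and $Y(T)+Z(T) \le c\delta\, e^{(\lambda_1/4)T}$ for some constant $c$. Iterate using the strong Markov property at $T, 2T, \ldots$: because the initial $Y+Z$ shrinks by at least the factor $e^{(\lambda_1/4)T}<1$ at each stage, the conditional failure probability over $[kT,(k+1)T]$ decays geometrically in $k$ (Lemma \ref{lm4} gives a polynomial-in-initial-data dependence, which combined with the geometric shrinkage is summable). Summing these estimates and applying Borel--Cantelli yields $\PP_\bs\{\tau_\theta=\infty\}\ge 1-\eps$.

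To upgrade the preliminary bound $\limsup\ln Y(t)/t\le \lambda_1/2$ to the sharp identity $\lim\ln Y(t)/t=\lambda_1$, I would use that on $\{\tau_\theta=\infty\}$ both $Y(t), Z(t)\to 0$, so for every small $\theta'>0$ the comparison $X(t)\le \hat X^{\theta'}(t)$ is eventually valid. Lemma \ref{lm5} gives $\ell_{\theta'}\to\ell_0$ as $\theta'\to 0^+$, which together with the Lipschitz bound $|F_1(X,Y)X - F_1(X,0)X|\le LY\to 0$ implies $\frac{1}{t}\int_0^t F_1(X,Y)X\,ds \to \ell_0$. Substituting back into the exact identity \eqref{e4-lm4} yields $\lim\ln Y(t)/t = \ell_0 - \alpha_2 - \sigma_2^2/2 = \lambda_1$. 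Similarly, since $Y\to 0$, $\frac{1}{t}\int_0^t F_2(Y,Z)Y\,ds \to 0$ and hence $\lim\ln Z(t)/t = -\alpha_3-\sigma_3^2/2$. The delicate part is the geometric iteration in the preceding paragraph, which substitutes for the tightness/dissipativity argument that is unavailable for \eqref{main}.
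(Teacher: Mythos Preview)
Your overall architecture---compare $X$ with $\hat X^\theta$ via the stopping time $\tau_\theta=\inf\{t:\alpha_4Y+\alpha_5Z\ge\theta\}$, then use ergodicity of $\hat X^\theta$---matches the paper. But two steps contain genuine gaps.

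\textbf{The iteration for $\PP_\bs\{\tau_\theta=\infty\}\ge1-\eps$.} Your claim that the conditional failure probability on $[kT,(k+1)T]$ decays geometrically is not justified. Lemma~\ref{lm4} does \emph{not} give a ``polynomial-in-initial-data'' dependence; it outputs a constant $K_{\eps,H,T}$ for a \emph{prescribed} error $\eps$, and that constant depends on a bound $H$ for the nutrient component, which you have no control over at time $kT$. More seriously, the Stage-one ergodic event $A_T$ you need on each block $[kT,(k+1)T]$ has a fixed failure probability independent of how small $Y(kT)+Z(kT)$ is, so these errors are not summable. The paper sidesteps the iteration entirely: since $\frac1t\int_0^tF_1(\hat X^\theta_H,0)\hat X^\theta_H\,ds\to\ell_\theta$ and $W_i(t)/t\to0$ almost surely, one can choose a single $T$ and a single event $\wdt\Omega_1$ (probability $\ge1-\eps/2$) on which the ergodic bounds hold for \emph{all} $t\ge T$ simultaneously. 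Combined with Lemma~\ref{lm4} on $[0,T]$ (event $\wdt\Omega_2$), one shows directly that $\alpha_4Y(t)+\alpha_5Z(t)<\theta$ for all $t\ge0$ on $\wdt\Omega_1\cap\wdt\Omega_2$, hence $\tau_\theta=\infty$ there. No Markov restarts, no Borel--Cantelli.

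\textbf{The sharp limit $\lim\ln Y(t)/t=\lambda_1$.} Your $\theta'\to0$ argument only yields $\limsup_{t\to\infty}\frac1t\int_0^tF_1(X,0)X\,ds\le\ell_{\theta'}\to\ell_0$, i.e.\ the upper bound $\limsup\le\lambda_1$; you give no mechanism for the matching lower bound (the comparison $X\le\hat X^{\theta'}$ plus monotonicity of $u\mapsto F_1(u,0)u$ goes only one way). The paper instead argues that on $\wdt\Omega_1\cap\wdt\Omega_2$ the occupation measures $\wdt\Pi_t^\bs$ are tight (using $X\le\hat X^\theta_H$ and integrability of $u^{q_0}$ under $\mu^\theta$) and hence converge weakly to the unique invariant measure $\bmu_1=\mu_1\times\bdelta^*\times\bdelta^*$ on $\R^3_+$. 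Uniform integrability then gives $\frac1t\int_0^tF_1(X(s),Y(s))X(s)\,ds\to\int F_1(u,0)u\,\mu_1(du)=\ell_0$ as an equality, and substituting into \eqref{e4-lm4} yields the exact rate. Your argument for $Z$ is fine once $Y\to0$ is known.
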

\begin{proof}
	Let $\Delta_0=\frac13\left(\left(\alpha_3+\frac{\sigma^2_3}2\right)\wedge|\lambda_1|\right)$.
	In view of Lemma \ref{lm5} (and \eqref{eq}), we can choose (and then fix) a $\theta\in(0,\alpha_4\wedge\alpha_5\wedge \frac{\alpha_4\Delta_0}L)$ such that
	$$
	\ell_\theta\leq \lambda_1+\Delta_0+\alpha_2+\frac{\sigma_2^2}{2}.
	$$
Define $\xi:=\inf\{t\geq 0: \alpha_4Y(t)+\alpha_5Z(t)\geq\theta\}$.
Because of standard comparison theorems \cite{IW1}, we have
that given $X(0)=x\in[0,H]$,
$X(t)\leq X_H^\theta(t)\,\forall 0\leq t\leq \xi$ with probability 1.
For $t\leq \xi$, we have
\begin{equation}\label{e2-prop3}
	\begin{aligned}
\ln Y(t)-\ln y=&\int_0^t (F_1(X(s), Y(s))X(s)-
%\beta
F_2(Y(s), Z(s))Z(s))ds-\left(\alpha_2+\frac{\sigma_2^2}2\right)t+\sigma_2W_2(t)\\	
\leq &\int_0^t F_1(X(s), 0)X(s)ds-\left(\alpha_2+\frac{\sigma_2^2}2\right)t+\sigma_2W_2(t)\\
&+\int_0^t [F_1(X(s), Y(s))X(s)-F_1(X(s),0)X(s)]ds\\
\leq &	\int_0^t F_1(X_H^\theta(t), 0)X_H^\theta(t)ds-\left(\alpha_2+\frac{\sigma_2^2}2\right)t+\sigma_2W_2(t)+\frac{L \theta}{\alpha_4}t;
\end{aligned}
	\end{equation}
	and
\begin{equation}\label{e3-prop3}
	\begin{aligned}
				\ln Z(t)-\ln z=&\int_0^t F_2(Y(s), Z(s))Y(s)ds-(\alpha_3+\frac{\sigma_3^2}2)t+\sigma_3W_3(t)\\
				\leq& L\int_0^t Y(s)ds-(\alpha_3+\frac{\sigma_3^2}2)t+\sigma_3W_3(t)\\
				\leq & \frac{L\theta}{\alpha_4}-(\alpha_3+\frac{\sigma_3^2}2)t+\sigma_3W_3(t)\\
				\leq & -2\Delta_0t+\sigma_3W_3(t).
			\end{aligned}
\end{equation}
In view of the ergodicity of ${\hat X}^\theta$ and law of large numbers for martingales,  we can find $T>0$ and a set
%\red{we need to define the probability space $\Omega$ etc}
 $\wdt\Omega_1\subset\Omega$ such that $\PP\{\wdt\Omega_1\}\geq 1-\frac{\eps}2$ and
for $\omega\in\wdt\Omega_1$, we have the following two estimates:
\begin{equation}\label{eq-111}
\int_0^t F_1({\hat X}^\theta_H(s),0){\hat X}^\theta_H(s)ds+\sigma_2W_2(t)\leq (\ell_\theta+\Delta_0)t\leq \big(\lambda_1+\alpha_2+\frac{\sigma_2^2}{2}+2\Delta_0\big)t,\; t\geq T,
\end{equation}
and
\begin{equation}\label{eq-112}
\sigma_3W_3(t)\leq \Delta_0t,\; t\geq T.
\end{equation}
In view of Lemma \ref{lm4}, for any $\eps>0$, $H>0$, $T>0$, we can choose $\bar K=\bar K_{\eps, H, T}$ such that
$\PP_\bs(\wdt\Omega_2)\geq 1-\eps$ given $\bs\in[0,H]\times[0,1]^2$ where
$$\wdt\Omega_2:=\left\{|\ln Z(t)-\ln z|\vee |\ln Y(t)-\ln y|\leq \bar K,\;\forall t\in[0,T]\right\}.$$
Let $\delta=\frac{\theta}{3(\alpha_4\vee\alpha_5)}e^{-\bar K}.$
Then, for $\omega\in\wdt\Omega_2$ and $y\vee z\leq\delta$, we have \begin{equation}\label{e6-prop3}Y(t)\leq ye^{\bar K}\leq \frac{\theta}{3(\alpha_4\vee\alpha_5)}\text{ and }Z(t)\leq ze^{\bar K}\leq \frac{\theta}{3(\alpha_4\vee\alpha_5)}\text{ for any }t\leq T.
\end{equation}
As a result, we must have $\xi>T$ for $\omega\in\wdt\Omega_2$.

Now, considering $y\vee z\leq\delta, x\leq H$ and $\omega\in\wdt\Omega_1\cap\wdt\Omega_2$, we have from \eqref{e2-prop3} and \eqref{eq-111} that
\begin{equation}\label{e4-prop3}
\ln Y(t)\leq \ln y +(\lambda_1+2\Delta_0)t\leq\ln y-\Delta_0t\leq\ln y\leq\ln \delta,
%\ln\big(\frac{\theta}{3(\alpha_4\vee\alpha_5)}\big)
 \text{ for } T\leq t\leq\xi,
\end{equation}
and from \eqref{e3-prop3} and \eqref{eq-112} that
\begin{equation}\label{e5-prop3}
\ln Z(t)\leq \ln z -\Delta_0t\leq\ln z\leq\ln\delta,
%\frac{\theta}{3(\alpha_4\vee\alpha_5)}
\text{ for } T\leq t\leq\xi.
\end{equation}
As a result of \eqref{e6-prop3}, \eqref{e4-prop3}, \eqref{e5-prop3} and definition of $\delta$, $\alpha_4Y(t)+\alpha_5Z(t)<\theta$ for any $0\leq t\leq\xi$ and  $\omega\in\wdt\Omega_1\cap\wdt\Omega_2.$
Therefore, we must have $\xi=\infty$.

Given that $\xi=\infty$ in $\wdt\Omega_1\cap\wdt\Omega_2.$, we can see from \eqref{e4-prop3} and \eqref{e5-prop3} that
$$
\limsup_{t\to\infty}\frac{\ln Y(t)}t\leq \lambda_1+2\Delta_0\leq-\Delta_0<0 \text{ and }
\limsup_{t\to\infty}\frac{\ln Z(t)}t\leq -\Delta_0<0, \omega\in \wdt\Omega_1\cap\wdt\Omega_2.
$$
These limits imply that there is no invariant measure on $\R^{3,\circ}_+$.
By a similar proof or a reference to \cite[Theorem 2.2]{nguyen2020long}, there is no invariant measure on $\R^\circ_{12+}$ either.
As a result, $\bmu_1:=\mu_1\times\bdelta^*\times\bdelta^*$  is the unique invariant measure of $\{\BS(t)\},$ where $\bdelta^*$ is the Dirac measure with mass at $0$.

On the other hand, with probability 1, any weak-limit (if it exists) of $\wdt\Pi_t^\bs$ $(:=\frac{1}{t}\int_0^t \1_{\{\BS(u)\in\cdot\}}\,du)$ as $t\to\infty$ is a unique invariant measure of  $\{\BS(t)\}$; see e.g. \cite[Theorem 4.2]{EHS15}.
%\red{should we cite a reference?}.
For $y\vee z\leq\delta, x\leq H$ and $\omega\in\wdt\Omega_1\cap\wdt\Omega_2$, because $\lim_{t\to\infty} (Y(t)+Z(t))=0$
and
\begin{equation}\label{e7-prop3}
\limsup_{t\to\infty}\frac1t\int_0^t X^{q_0}(s)ds\leq \lim_{t\to\infty}\frac1t\int_0^t ({\hat X}^{\theta}_H(s))^{q_0}ds=\int_{[0,\infty)}u^{q_0}\mu^\theta(du)<\infty,
\end{equation}
we get that $\{\wdt\Pi_t^\bs\}$ is tight for $\omega \in\wdt\Omega_1\cap\wdt\Omega_2$
and subsequently, its limit must be $\bmu_1$, the unique invariant probability measure on $\R^{3}_+$.
This weak convergence together with the integrability \eqref{e7-prop3} imply that
\begin{equation}\label{e8-prop3-1}\lim_{t\to\infty}\left(\frac1t\int_0^t F_1(X(s), Y(s))X(s)ds-\left(\alpha_2+\frac{\sigma_2^2}2\right)\right)=\int_{\R^3_+} F_1(u,v)u \mu_1(du,dv)-\left(\alpha_2+\frac{\sigma_2^2}2\right)=\lambda_1<0.
\end{equation}
Applying \eqref{e8-prop3-1} into \eqref{e2-prop3} we obtain that
\begin{equation}\label{e8-prop3-2}
\lim_{t\to\infty}\frac{\ln Y(t)}t=\lambda_1<0, \omega\in \wdt\Omega_1\cap\wdt\Omega_2,\text{ for any }\bs=(x,y,z)\text{ with } 0\leq x\leq H, y+z\leq\delta.
\end{equation}
Because $Y(t)$ tends to $0$ at the exponential rate $\lambda_1$, we have from the first equality of \eqref{e3-prop3} and the boundedness of $F_2$ that
\begin{equation}\label{e8-prop3-3}
\lim_{t\to\infty}\frac{\ln Z(t)}t=\lim_{t\to\infty}\frac1t\int_0^t F_2(Y(s), Z(s))Y(s)ds-(\alpha_3+\frac{\sigma_3^2}2)+\lim_{t\to\infty}\frac{\sigma_3W_3(t)}t=-(\alpha_3+\frac{\sigma_3^2}2).
\end{equation}	
The proof is complete.
\end{proof}
Now, we are ready to prove Theorem \ref{thm2}.
\begin{proof}[Proof of Theorem \ref{thm2}.]
Note again that $\bmu_1$ $(:=\mu_1\times\bdelta^*\times\bdelta^*)$
	is the unique invariant probability measure on the boundary and therefore,
	the only invariant probability measure in $\R^3_+$ because $(X(t), Y(t), Z(t))$ has no invariant probability measure in $\R^{3,\circ}_+$.
	Let $H$ be sufficiently large such that $\mu_1((0,H))>1-\eps$ and then let $\delta>0$ satisfy \eqref{e0-prop3}.
	
	%Because of the tightness of
	Thanks to Theorem \ref{thm1}, the family
	$\left\{\check \Pi_t^{\bs}(\cdot):=\frac1t\int_0^t\PP_{\bs}\left\{(X(u),Y(u),Z(u))\in\cdot\right\}du, t\geq 0\right\}$
	is tight in $\R^3_+$.
	Since any weak-limit of $\check\Pi_t^{\bs}$ as $t\to\infty$ must be an invariant probability measure of $\{\BS(t)\}$, (see e.g. \cite[Theorem 9.9]{ethier2009markov}),
	we have that
	$\check\Pi_t^{\bs}$ converges weakly to $\bmu_1$ $(=\mu_1\times\bdelta^*\times\bdelta^*)$ as $t\to\infty$.
		Thus,  there exists a  $\check T=\check T(\bs,\eps)>0$ such that
	$$\check\Pi^{\check T}_{\bs}((0,H)\times(0,\delta)\times(0,\delta))>1-\eps,$$
	or equivalently,
	$$\dfrac1{\check T}\int_0^{\check T}\PP_{\bs}\{(X(t),Y(t),Z(t))\in (0,H)\times(0,\delta)\times(0,\delta)\}dt>1-\eps.$$
	As a result,
	$$\PP_{\bs}\{\hat\tau\leq\check T\}>1-\eps,$$
	where $\hat\tau=\inf\{t\geq 0: (X(t),Y(t),Z(t))\in (0,H)\times(0,\delta)\times(0,\delta)\}$.
	Using the strong Markov property and \eqref{e0-prop3},
	we deduce that
	\begin{equation}
	\begin{aligned}
		\PP_{\bs}&\left\{\lim_{t\to\infty}\frac{\ln Y(t)}{t}=\lambda_1 \text{ and } \lim_{t\to\infty}\frac{\ln Z(t)}{t}=-\alpha_3-\frac{\sigma_3^2}2\right\}\\
		&\geq (1-\eps)(1-\eps)>1-2\eps, \text{ for all } \bs\in\R^{3,\circ}_+.
		\end{aligned}
	\end{equation}
	% Letting $\eps\to0$ we obtain that
	%$$
	%\PP_{s,i,r}\left\{\lim_{t\to\infty}\dfrac{\ln I(t)}t=\lambda<0, \lim_{t\to\infty}\dfrac{\ln R(t)}t=\max\{-c_3-0.5\sigma^3_2,\lambda\}<0\right\}=1$$
	Letting $\eps\to0$ we obtain the desired result.
\end{proof}

\section{Proof of Theorem \ref{thm3}}\label{s:ext}
We begin with a proof for Proposition \ref{prop2}.
\begin{proof}[Proof of Proposition \ref{prop2}]
		Let \begin{equation}\label{deltan*}
	\Delta_1:= \frac{\lambda_1}5>0, \text{ and }n^* \text{ be the smallest integer satisfying } \Delta_1(n^*-1)\geq \alpha_2+\frac{\sigma_2^2}2.
	\end{equation}	%\wedge \frac{F_1^{\vee}-\lambda_1-\alpha_2-\frac{\sigma_2^2}2}4$$
		Because $\int_{\R_+}F_1(u,0)u\mu_1(du)=\lambda_1+\alpha_2+\frac{\sigma_2^2}2$ and $F_1(u,0)u$ is an increasing function we have $\lim_{u\to\infty} F_1(u,0)u> \lambda_1+\alpha_2+\frac{\sigma_2^2}2$.
		%($F_1^{\vee}$ can be $\infty$).
	%Because $\int_{\R_+}F_1(x,0)x\mu_1(dx)=\lambda_1+\alpha_2+\frac{\sigma_2^2}2$
Moreover,
		there exist $M>\lambda_1+\alpha_2+\frac{\sigma_2^2}2$ such that
	\begin{equation}\label{eq-F1M}
	\int_{\R_+}\wdt F_{1,M}(u,0)\mu_1(du)\geq \lambda_1+\alpha_2+\frac{\sigma_2^2}2-\frac{\Delta_1}2 \text{ where }\wdt F_{1,M}(u,v):=(F_1(u,v)u)\vee M,
	\end{equation}
and $H>0$ such that
\begin{equation}\label{eq-H}
F_{1}(u,0)u\geq \lambda_1+\alpha_2+\frac{\sigma_2^2}2-\Delta_1\text{ for any } u\geq H.
\end{equation}
From \eqref{eq-F1M} and the ergodicity of $\hat X$,
we have
$$
\lim_{t\to\infty}\E\frac1t\int_0^t \wdt F_{1, M}(\hat X_{0}(s), 0)ds\geq \lambda_1+\alpha_2+\frac{\sigma_2^2}2-\frac{\Delta_1}2,
$$
 where $\hat X_0(s)$ is the solution to \eqref{main11} with initial condition $0$.
As a result,
there exists $T>0$ such that
$$
\E\frac1t\int_0^t \wdt F_{1, M}(\hat X_{0}(s), 0)ds\geq \lambda_1+\alpha_2+\frac{\sigma_2^2}2-\Delta_1,\;\forall t\geq T.
$$
Because of the uniqueness of the solution, $\hat X_x(s)\geq\hat X_0(s), s\geq 0$ almost surely for any $x\geq 0$, where $\hat X_x(s)$ is the solution to \eqref{main11} with initial condition $x$.
Then, thanks to the monotone increasing property of $\wdt F_{1,M}(u,0)$ (inherited from that property of $F_1(u,0)u$), we have
$$
\E\frac1t\int_0^t \wdt F_{1,M}(\hat X_{x}(s), 0)ds\geq \lambda_1+\alpha_2+\frac{\sigma_2^2}2-\Delta_1,\;\forall t\geq T, x\geq 0.
$$

Note that $(\hat X(t),0)$ is the solution $ (\BX,\BY)$ to \eqref{main2} with initial value $\BY(0)=0$.		
Because of the Feller-Markov property of $(\BX,\BY)$, there exists $0<\delta_0<\frac{\Delta_1}L$ such that
for any $(x,y)\in[0,H]\times(0,\delta_0]$, we have
\begin{equation}\label{eq-33}
\E_{x,y}\frac1t\int_0^t \wdt F_{1,M}(\BX(s), \BY(s))\BX(s)ds\geq \lambda_1+\alpha_2+\frac{\sigma_2^2}2-2\Delta_1, \;\forall T\leq t\leq n^*T,
\end{equation}
where the subscript in $\E_{x,y}$ indicates the initial condition of $(\BX,\BY)$.

Now, let
$$	\phi_{x,y,t}(\theta):=\ln\E_{x,y}\exp\left\{-\theta \Big(\int_0^t\wdt F_{1,M}(\BX(s),\BY(s))\BX(s)ds-\alpha_2t-\frac{\sigma_2^2}2t+\sigma_2W_2(t)\Big)\right\},$$
be the log-Laplace transform of the random variable
$$
-\Big(\int_0^t\wdt F_{1,M}(\BX(s),\BY(s))\BX(s)ds-\alpha_2t-\frac{\sigma_2^2}2t+\sigma_2W_2(t)\Big).
$$
Because of the boundedness of $\wdt F_{1,M}$, by a property of the log-Laplace transform, see \cite[Lemma 3.5]{HN16},
%there exists $\theta>0$ such that
we have that the $\phi_{x,y,t}(\theta)$ is twice differentiable (in $\theta$) on $[0,\frac 12)$, with
\begin{equation}\label{eq-34}
\dfrac{d\phi_{x,y,t}}{d\theta}(0)=\E_{x,y}\left\{- \Big(\int_0^t\wdt F_{1,M}(\BX(s),\BY(s))\BX(s)ds-\alpha_2t-\frac{\sigma_2^2}2t+\sigma_2W_2(t)\Big)\right\},
%\leq-\left( \lambda_1-2\Delta_1\right)t,
\end{equation}
and
\begin{equation}\label{eq-35}
\sup_{|\theta|<1, t\leq n^*T}{\dfrac{d^2\phi_{x,y,t}}{d\theta^2}(0)}\leq K_\phi,
\end{equation}
for some constant $K_\phi=K_\phi(M, n^*T)$.
Because of \eqref{eq-33} and \eqref{eq-34}, one has
\begin{equation}\label{eq-36}
\dfrac{d\phi_{x,y,t}}{d\theta}(0)\leq-\left( \lambda_1-2\Delta_1\right)t.
\end{equation}
From \eqref{eq-35} and \eqref{eq-36}, we can have a Taylor expansion as follows
\begin{equation}\label{eq-37}
	\begin{aligned}
\phi_{x,y,t}(\theta)
	\leq& \phi_{x,y,t}(0)+\theta\dfrac{d\phi_{x,y,t}}{d\theta}(0)+\theta^2\sup_{|\theta|<1}{\dfrac{d^2\phi_{x,y,t}}{d\theta^2}(0)}\\\leq &
	0-\theta\left(\lambda_1-2\Delta_1\right)t+\theta^2 K_\phi,\;\forall t\in[T, n^*T].
	\end{aligned}
\end{equation}
Because $\lambda_1-2\Delta_1\geq 3\Delta_1$, we can pick a $\theta>0$ such that
\begin{equation}\label{eq-theta}
-\theta\left(\lambda_1-2\Delta_1\right)T+\theta^2 K_\phi\leq -2\Delta_1\theta \text{ and } \theta<\frac{\Delta_1}{\sigma^2_2}.
\end{equation}
With this chosen $\theta$, we have from \eqref{eq-37} that $\phi_{x,y,t}(\theta)\leq -2\Delta_1\theta t,\;\forall t\in[T,n^*T]$. This implies
\begin{equation}\label{e4-prop2}
	\begin{aligned}
		\frac{\E_{x,y} [(\bar Y(t))^{-\theta}]}{y^{-\theta}}=&\E_{x,y}\exp\left\{-\theta \Big(\int_0^tF_{1}(\BX(s),\BY(s))\BX(s)ds-\alpha_2t-\frac{\sigma_2^2}2t+\sigma_2W_2(t)\Big)\right\}\\
			\leq &
	\E_{x,y}\exp\left\{-\theta\Big( \int_0^t\wdt F_{1,M}(\BX(s),\BY(s))\BX(s)ds-\alpha_2t-\frac{\sigma_2^2}2t+\sigma_2W_2(t)\Big)\right\}\\
	=& \exp(\phi_{x,y,t}(\theta))\leq e^{-2\Delta_1\theta T},\;\forall t\in[T, n^*T].
	\end{aligned}
\end{equation}
On the other hand,
note that
$
F_1(u,0)u\geq \lambda_1+\alpha_2+\frac{\sigma_2^2}2-\Delta_1, u\geq H
$ and
 $|F_{1}(u,v)u-F_{1}(u,0)u|\leq Lv$,
imply
\begin{equation}\label{eq-310}
	F_{1}(u,v)u\geq \lambda_1+\alpha_2+\frac{\sigma_2^2}2-2\Delta_1 \text{ if } u\geq H \text{ and } v\leq\frac{\Delta_1}L.
\end{equation}
Because of \eqref{eq-310}, \eqref{deltan*}, and $\theta<\frac{\Delta_1}{\sigma_2^2}$ (due to \eqref{eq-theta}),
%and $F_x(x,y)x\leq F_x(x,0)x+\frac{y}L$
we have
\begin{equation}\label{e3-prop2}
	\begin{aligned}
d(\BY(t))^{-\theta}=&-\theta (\BY(t))^{-\theta}\left(F_1(\BX(t),\BY(t))\BX(t)-\alpha_2-(\theta+1)\frac{\sigma_2^2}2\right)dt-\theta\sigma_2(\BY(t))^{-\theta}dW_2(t)\\
\leq
& -2\theta\Delta_1(\BY(t))^{-\theta}-\theta\sigma_2(\BY(t))^{-\theta}dW_2(t) \text{ if } \BX(t)\geq H, \BY(t)\leq\frac{\Delta_1}L.
\end{aligned}
\end{equation}
An use of It\^o's formula shows that
\begin{equation}\label{eq-1111}
de^{2\theta\Delta_1 t}(\BY(t))^{-\theta}\leq \theta\sigma_2e^{2\theta\Delta_1 t}(\BY(t))^{-\theta}dW_2(t) \text{ if } \BX(t)\geq H, \BY(t)\leq\frac{\Delta_1}L.
\end{equation}
Let $\eta:=(n^*T)\wedge\inf\{t\geq 0: \BX(t)\leq H \text{ or } \BY(t)\geq \delta_0\}$.
It is noted that $\delta_0$ is chosen to be less than $\frac{\Delta_1}{L}$.
From \eqref{eq-1111} and an application of Dynkin's formula, we have
\begin{equation}\label{e6-prop2}
	\begin{aligned}
		\E e^{2\theta\Delta_1 (t\wedge\eta)}(\BY(t\wedge\eta))^{-\theta}&\leq y^{-\theta}, t\geq 0.
	\end{aligned}
\end{equation}
From the first line of \eqref{e3-prop2} and the fact that $F_1(u,v)u\geq 0$, we get
$$
d(\BY(t))^{-\theta}\leq \theta\left(\alpha_2+(\theta+1)\frac{\sigma_2^2}2\right)dt -\theta\sigma_2(\BY(t))^{-\theta}dW_2(t).
$$
Using arguments similar to the ones used in the process of getting \eqref{e2-thm1} from \eqref{eq-LU2} in the proof of Theorem \ref{thm1} (using appropriate stopping times until that $(\bar Y(t))^{-\theta}$ is still bounded by $n$ and then letting $n\to\infty$), yields
\begin{equation}\label{e7-prop2}
	\E_{x,y}(\BY(t))^{-\theta}\leq e^{\theta\left(\alpha_2+(\theta+1)\frac{\sigma_2^2}2\right)t}y^{-\theta},\; t\geq 0, x\geq 0, y>0.
\end{equation}

We have the following three estimates using the strong Markov property of $(\BX(t),\BY(t))$. Firstly we note that
\begin{equation}\label{e8-prop2}
	\begin{aligned}
		\E_{x,y}& \1_{\{(n^*-1)T\leq \eta\leq n^*T,\BY(\eta)
			\leq \delta_0\}} (\BY(n^*T))^{-\theta}
		\\
		\leq& \E_{x,y} \1_{\{(n^*-1)T\leq \eta\leq n^*T,\BY(\eta)< \delta_0\}}\E_{X(\eta),\BY(\eta)} (\BY(n^*T-\eta))^{-\theta}\\
		\leq &e^{\theta\left(\alpha_2+(1-\theta)\frac{\sigma_2^2}2\right)T}\E_{x,y} \1_{\{(n^*-1)T\leq \eta\leq n^*T,\BY(\eta)\leq \delta_0\}}(\BY(\eta))^{-\theta}\\
		\leq & e^{\theta\left(\alpha_2+(1-\theta)\frac{\sigma_2^2}2\right)T}e^{-2\Delta_1\theta (n^*-1)T}\E_{x,y} \1_{\{\eta\leq T,\BY(\eta)\leq \delta_0\}} e^{2\Delta_1\theta \eta}(\BY(\eta))^{-\theta}\\
		\leq &  e^{-\Delta_1\theta T}y^{-\theta}\E_{x,y} \1_{\{(n^*-1)T\leq \eta\leq n^*T,\BY(\eta)< \delta_0\}} e^{2\Delta_1\theta \eta}(\BY(\eta))^{-\theta},
	\end{aligned}
\end{equation}
where the last inequality is due to \eqref{deltan*}. Secondly, we get
\begin{equation}\label{e9-prop2}
	\begin{aligned}
		\E_{x,y}& \1_{\{\eta\leq (n^*-1)T,\BY(\eta)< \delta_0\}} (\BY(n^*T))^{-\theta}
		\\
		\leq& \E_{x,y} \1_{\{\eta\leq (n^*-1)T,\BY(\eta)\leq \delta_0\}}\E_{X(\eta),\BY(\eta)} (\BY(n^*T-\eta))^{-\theta}\\
			\leq &\E_{x,y} \1_{\{\eta\leq T,\BY(\eta)\leq \delta_0\}} (\BY(\eta))^{-\theta}\exp\{-2\Delta_1\theta(n^*T-\eta)\}\\
			\leq & e^{-2\Delta_1\theta T}\E_{x,y} \1_{\{\eta\leq T,\BY(\eta)\leq \delta_0\}} (\BY(\eta))^{-\theta}\\
			\leq &  e^{-2\Delta_1\theta T}\E_{x,y} e^{2\Delta_1\theta \eta}\1_{\{\eta\leq T,\BY(\eta)< \delta_0\}} (\BY(\eta))^{-\theta},
	\end{aligned}
\end{equation}
where in the third line we used \eqref{e4-prop2}.
Finally,
\begin{equation}\label{e10-prop2}
	\begin{aligned}
		\E_{x,y} \1_{\{\BY(\eta)\leq \delta_0\}} (\BY(n^*T))^{-\theta}
		\leq& \E_{x,y} \1_{\{\BY(\eta)\leq \delta_0\}}\E_{X(\eta),\BY(\eta)} (\BY(n^*T-\eta))^{-\theta}\\
		\leq &\E_{x,y} \delta_0^\theta e^{\theta\left(\alpha_2+(\theta+1)\frac{\sigma^2}2\right)(n^*T-\eta)}
		\\
		\leq& \hat K:=\delta_0^\theta e^{\theta\left(\alpha_2+(\theta+1)\frac{\sigma^2}2\right)n^*T}.
	\end{aligned}
\end{equation}
Adding \eqref{e8-prop2}, \eqref{e9-prop2} and \eqref{e10-prop2} side by side we have
\begin{equation}\label{e11-prop2}
\E_{x,y} (\BY(n^*T))^{-\theta}\leq e^{-\Delta_1 \theta T}\E_{x,y} e^{2\theta\Delta_1 \eta} (\BY(\eta))^{-\theta}+\hat K\leq e^{-\Delta_1 \theta T}y^{-\theta}+\hat K,
\end{equation}
where the last inequality follows from \eqref{e6-prop2}.

By the Markov property, we can recursively apply \eqref{e11-prop2}
to show that
$$
\E_{x,y} (\BY(k n^*T))^{-\theta}\leq \hat K\sum_{i=1}^k \kappa^{k-1} +\kappa^k y^{-\theta}\leq \frac{\hat K}{1-\kappa} + \kappa^k y^{-\theta}, \text{ where } \kappa:=e^{-\Delta_1\theta T}<1.
$$
This and \eqref{e7-prop2} imply that
$$\E_{x,y} (\BY(t))^{-\theta}\leq e^{\theta(\alpha_2+\frac{\sigma^2}2(\theta+1)n^*T}\left(\frac{\hat K}{1-\kappa} + \kappa^k y^{-\theta}\right)\;\forall t\in[kn^*T, (k+1)n^*T],$$
which is equivalent to \eqref{e0-prop2}.
\end{proof}

The rest of this section is devoted to proving Theorem \ref{thm3}.
Before constructing suitable coupling systems, we need the following bound for the growth rate of the solution on the boundary corresponding to $z=0$.
\begin{lm}\label{lm1}
For any $\eps\in(0,1)$, $\delta>0$, there exists $M_0=M_0(\eps, \delta, x,y)$ such that
$$\PP_{x,y}\left\{\BX(t)+\BY(t)+(\BX(t))^{-1}+(\BY(t))^{-1}\leq M_0e^{\delta t},\;\forall t\geq 0\right\}\geq 1-\eps.
% \text{ for all } \eps\in(0,1).
$$
\end{lm}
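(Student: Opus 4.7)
The plan is to control each of $\BX(t)$, $\BY(t)$, $\BX(t)^{-1}$, $\BY(t)^{-1}$ by first establishing uniform-in-$t$ moment bounds for a suitable Lyapunov-type function $V$, then applying a Markov/Borel--Cantelli argument at the integer times $n\in\N$, and finally extending the discrete bound to one uniform on $[0,\infty)$ via short-time oscillation estimates. The key new technical ingredient is a uniform bound on $\E_{x,y}[\BX(t)^{-\theta}]$ to complement the moment bounds of Theorem \ref{thm1} and the inverse-moment bound for $\BY$ provided by Proposition \ref{prop2}.

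Fix $\theta\in(0,q_0-1]$. Theorem \ref{thm1} applied to the two-dimensional system \eqref{main2} yields $\sup_{t\geq 0}\E_{x,y}[(1+\BX(t)+\BY(t))^{q_0}]<\infty$, and Proposition \ref{prop2} gives $\sup_{t\geq 0}\E_{x,y}[\BY(t)^{-\theta}]<\infty$. For the remaining piece, It\^o's formula on $\BX^{-\theta}$ together with $F_1\leq L$ yields
\[
\op(\BX^{-\theta})\leq -\theta\Lambda\BX^{-\theta-1}+\theta L\BY\BX^{-\theta}+C_0\BX^{-\theta}.
\]
Young's inequality with conjugate exponents $p=\theta+1$ and $p'=(\theta+1)/\theta$ dominates the cross term by $\tfrac{\theta\Lambda}{2}\BX^{-\theta-1}+C\BY^{\theta+1}$, while the residual $C_0\BX^{-\theta}$ is absorbed into $\tfrac{\theta\Lambda}{4}\BX^{-\theta-1}+C'$ via a case split on $\{\BX\leq\epsilon\}$ vs.\ $\{\BX>\epsilon\}$, giving $\op(\BX^{-\theta})\leq -\tfrac{\theta\Lambda}{4}\BX^{-\theta-1}+C(1+\BY^{\theta+1})$. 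Taking expectations, using that $\E\BY(t)^{\theta+1}$ is uniformly bounded (since $\theta+1\leq q_0$), together with Jensen's inequality $\E[\BX^{-\theta-1}]\geq(\E\BX^{-\theta})^{(\theta+1)/\theta}$, one obtains a superlinear differential inequality $f'\leq -cf^{(\theta+1)/\theta}+C'$ whose solutions stay globally bounded.

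Let $V(x,y):=(1+x+y)^{q_0}+x^{-\theta}+y^{-\theta}$; by the above $\sup_{t\geq 0}\E_{x,y}V(\BX(t),\BY(t))\leq K(x,y)$, and since $\theta<1\leq q_0$ one has $x+y+x^{-1}+y^{-1}\leq c_\theta V(x,y)^{1/\theta}$. Markov's inequality gives
\[
\PP_{x,y}\bigl(V(\BX(n),\BY(n))>Re^{\theta\delta n/2}\bigr)\leq K R^{-1}e^{-\theta\delta n/2},
\]
which is summable in $n\in\N$, so a union bound yields $V(\BX(n),\BY(n))\leq Re^{\theta\delta n/2}$ for every $n\geq 0$ with probability at least $1-\eps/2$ provided $R=R(\eps,x,y)$ is large enough. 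To pass from integer to continuous time, for the $(1+\BX+\BY)^{q_0}$ and $\BX^{-\theta}$ pieces one combines the strict Foster--Lyapunov estimates above with Doob's and Burkholder--Davis--Gundy's inequalities to bound $\sup_{t\in[n,n+1]}$ by a controlled multiple of the value at $n$. For $\BY^{-\theta}$, since $\op(\BY^{-\theta})\leq C_1\BY^{-\theta}$, the process $e^{-C_1(t-n)}\BY(t)^{-\theta}$ is a nonnegative local supermartingale on $[n,\infty)$, and Doob's maximal inequality conditional on $\F_n$ gives $\PP(\sup_{t\in[n,n+1]}\BY(t)^{-\theta}>A\BY(n)^{-\theta}\mid\F_n)\leq e^{C_1}/A$; a further Borel--Cantelli with $A=A_n$ of summable inverse absorbs these oscillations into the envelope $M_0e^{\delta t}$.

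The main obstacle is the uniform-in-$t$ bound on $\E_{x,y}[\BX(t)^{-\theta}]$. Because $\BX$ and $\BY$ are coupled through $F_1$ and $F_1$ need not be bounded below, no direct Foster--Lyapunov decay is available for $\BX^{-\theta}$ alone; the argument hinges on balancing the super-attractive drift $-\theta\Lambda\BX^{-\theta-1}$ coming from the nutrient input against the cross coupling via Young's inequality, and the split succeeds precisely because the choice $\theta+1\leq q_0$ makes $\E\BY^{\theta+1}$ uniformly bounded through Theorem \ref{thm1}.
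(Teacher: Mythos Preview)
Your approach is correct and shares the same skeleton as the paper's proof: uniform-in-$t$ moment bounds for a Lyapunov-type quantity, Markov's inequality plus a union bound at integer times, and a short-time oscillation estimate to pass to all $t\geq 0$. The one substantive difference is the treatment of $\BX(t)^{-1}$. The paper does \emph{not} include $x^{-\theta}$ in its Lyapunov function; it works with $\bar V(x,y)=x+y+y^{-\theta}$, for which the clean bound $\op\bar V\leq A_0\bar V$ holds and the stopping-time/Markov argument for $\sup_{[n,n+1]}\bar V$ is immediate. Only \emph{after} the envelope for $\BY$ is in hand does the paper bound $\BX^{-1}$ pathwise, via the explicit variation-of-constants representation of the $\BX$ equation together with a separate bound on $|W_1|$. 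Your route---balancing the attractive drift $-\theta\Lambda\BX^{-\theta-1}$ against the coupling $\theta L\,\BY\,\BX^{-\theta}$ via Young's inequality with exponents $(\theta+1,(\theta+1)/\theta)$---is more systematic and would survive even if the nutrient equation were not affine in $\BX$, whereas the paper's argument exploits precisely that affine structure.

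One small caution: your BDG step for the short-time control of $\BX^{-\theta}$ needs a bound on $\E[\BX^{-2\theta}]$, which you have not established. Either take $\theta\leq (q_0-1)/2$ so that the same Young-inequality argument applies with $2\theta$ in place of $\theta$, or---cleaner---replace BDG by the stopping-time trick the paper uses for $\bar V$: stop at $\inf\{t\geq n:\BX(t)^{-\theta}\geq c\}$, drop the negative term to get $\op(\BX^{-\theta})\leq C(1+\BY^{\theta+1})$, take expectations, and use the uniform bound on $\E\BY^{\theta+1}$ to conclude $\PP(\sup_{[n,n+1]}\BX^{-\theta}\geq c)\leq K''/c$.
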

\begin{proof}
		Pick $\theta>0$ satisfying \eqref{e0-prop2} and
let $\bar V(x,y)=x+y+y^{-\theta}$.
In view of \eqref{e1-thm1} and \eqref{e0-prop2},
we have
\begin{equation}\label{eq-barC}
\E_{x,y}\bar V(\BX(t), \BY(t))\leq \bar C_{x,y},\;\forall t\geq 0,
\end{equation}
for some constant $\bar C_{x,y}$ independent of $t$.
It\^o's formula yields
\begin{equation}\label{eq-barV}
\begin{aligned}
d\bar V(\BX(t),\BY(t))=&\Lambda-\alpha_1\BX(t)-(\alpha_2-\alpha_4)\BY(t)-\theta (\BY(t))^{-\theta}\left(F_1(\BX(t),\BY(t)\BX(t)-\alpha_2-(\theta+1)\frac{\sigma_2^2}2\right)dt\\
&+\sigma_1\BX(t)dW_1(t)+\sigma_2\BY(t)dW_2(t)-\theta\sigma_2\BY^{-\theta}dW_2(t)\\
\leq& A_0\bar V(\BX(t),\BY(t))dt+\sigma_1\BX(t)dW_1(t)+\sigma_2\BY(t)dW_2(t)-\theta\sigma_2(\BY(t))^{-\theta}dW_2(t)
\end{aligned}
\end{equation}
for some $A_0>0$.
For any $c>0$, let $\bar\tau_{c}:=\inf\{t\geq0: \bar V(\BX(t),\BY(t))\geq c\}.$
Equation \eqref{eq-barV} together with an application of Dynkin's formula implies that
$$\E_{x,y}e^{-A_0(\bar\tau_{c}\wedge t)} \bar V(\BX(\bar\tau_{c}\wedge t),\BY(\bar\tau_{c}\wedge t))\leq \bar V(x,y),\quad\forall t\geq0.$$
As a result
$$\E_{x,y} \bar V(\BX(\bar\tau_{c}\wedge t),\BY(\bar\tau_{c}\wedge t))\leq \bar V(x,y)e^{A_0t},\quad\forall t\geq0.$$
Therefore, for any $c>0$, applying Markov's inequality we have
\begin{equation}\label{eq-supV}
\PP\left\{\sup_{t\in[0,1]}\bar V(\bar X(t),\bar Y(t))\geq c\right\}\leq \frac1{c}\E_{x,y} \bar V(\BX(\bar\tau_{c}\wedge 1),\BY(\bar\tau_{c}\wedge 1))\leq \frac{e^{A_0}}{c}\bar V(x,y).
\end{equation}
For $\eps>0$, $\delta>0$, pick $ M_0$ sufficiently large such that $\frac{e^{A_0}\bar C_{x,y}}{M_0}\sum_{n=1}^\infty e^{-\delta\theta n}<\eps$.
By the Markov property of $(\BX,\BY)$, \eqref{eq-barC}, and \eqref{eq-supV}, we have
$$
\PP\left\{\sup_{t\in[n,n+1]}\bar V(\bar X(t),\bar Y(t))> M_0 e^{\delta n}\right\}\leq \frac{e^{A_0}}{M_0 e^{\delta n}}\E_{x,y}\bar V(\bar X(n),\bar Y(n))\leq \frac{e^{A_0}\bar C_{x,y}}{M_0 e^{\delta n}};
$$
which leads to
\begin{equation}\label{e5-lm1}
\PP\left\{\sup_{t\in[n,n+1]}\bar V(\bar X(t),\bar Y(t))\leq M_0 e^{\delta\theta n}, \text{ for all } n\in\N\right\}>1-\sum_{n=1}^\infty\frac{\bar C_{x,y}}{ M_0 e^{\delta\theta n}}.
\end{equation}
From \eqref{e5-lm1} and the definition of $M_0$, we obtain the desired result.

Next, we need to bound $X^{-1}(t)$.
Using the variation of constants formula
(see \cite[Chapter 3]{MAO97}), we can write $\BX(t)$ in the form
\begin{equation}
	\BX(t)=\Phi^{-1}(t)\left[\int_0^t\Phi(s)\left(\Lambda-F_1(\BX(s), \BY(s))\BY(s)+\alpha_4\BY(s)\right)ds\right],
\end{equation}
where
$$
\Phi(t):=\exp\left\{\left(\alpha_1+\frac{\sigma_1^2}2\right)t-\sigma_1 W_1(t)\right\}.
$$
In view of \eqref{e5-lm1}, for any $\eps>0$, there exists $M_2=M_2(\eps,\delta,x,y)>0$ such that
\begin{equation}\label{e8-lm1}
	\PP_{x,y}\left\{ \BY(t)\leq M_2e^{\delta\theta t},\,\forall\, t\geq0\right\}\geq 1-\frac\eps2.
\end{equation}
It is easily seen that there is $M_3=M_3(\eps,\delta)>0$ such that
$$
\PP\left\{\sigma_1|W_1(t)|\leq M_3e^{\delta\theta t}, \;\forall t\geq 0\right\}\geq 1-\frac\eps2.
$$

On the other hand, given that
$\sigma_1|W_1(t)|\leq M_3e^{\delta\theta t}$ and
$\Lambda-F_1(\BX(t), \BY(t)\BY(t)+\alpha_4\BY(t)\leq \Lambda+\alpha_4\BY(t)\leq \alpha_4 M_2e^{\delta\theta t}+\Lambda$,
	one can see from \eqref{e8-lm1} that
$$\BX(t)\geq \frac{e^{-2\delta\theta t}}{M_4}\geq \frac{e^{-2\delta t}}{M_4} \text{ for some constant } M_4 \text{ depending on } M_2, M_3.$$
Combining this with \eqref{e5-lm1} concludes the proof (after re-assigning $\delta:=\delta\theta$).
\end{proof}

	Since $F_1(u,v)u$ and $F_2(v,w)v$ are Lipschitz and $F_1$ and $F_2$ are bounded,
there exists $c_0>0$ such that
\begin{equation}\label{boundc0}
\begin{aligned}
(&u_1-u_2)[(\Lambda-F_1(u_1,v_1)u_1v_1-\alpha_1u_1+\alpha_4 v_1)-(\Lambda-F_1(u_2,v_2)u_2v_2-\alpha_1u_2+\alpha_4 v_2)]\\
	&+(v_1-v_2)[F_1(u_1,v_1)u_1v_1-\alpha_2v_1-(F_1(u_2,v_2)u_2v_2 -F_2(v_2,w)v_2w-\alpha_2v_2]
	\\
	&+\sigma_1^2(u_1-u_2)^2+\sigma_2^2(v_1-v_2)^2\\
	\leq&  \frac12\left(c_0(1+u_1+v_1+u_2+v_2)^2[(u_1-u_2)^2+(v_1-v_2)^2]+ c_0 w_2^2\right),\quad\forall u_1,u_2,v_1,v_2,w\geq 0.
\end{aligned}
\end{equation}
Let
\begin{equation}\label{eq-delta-N}
\gamma_0:=-\frac{\lambda_2}3>0,
\text{ and } \wdt N> \gamma_0 +(\sigma_1^2\vee\sigma_2^2) +c_0,
\end{equation}
and consider
the coupling system:

%\red{Should give in a remark an intuition for this coupling}

\begin{equation}\label{coup1}
	\begin{cases}
		d\BX(t)=& [\Lambda- F_1(\BX(t),\BY(t))\BX(t)\BY(t)-\alpha_1\BX(t)+\alpha_4 \BY(t)]dt+\sigma_1 \BX(t)dW_1(t)\\
		d\BY(t)=& [F_1(\BX(t),\BY(t))\BX(t)\BY(t)-\alpha_2\BY(t)]dt+\sigma_2 \BY(t)dW_2(t)\\
		d\wdt X(t)=& [\Lambda- F_1(\wdt X(t),\wdt Y(t))\wdt X(t)\wdt Y(t)-\alpha_1\wdt X(t)+\alpha_4\wdt  Y(t)+\alpha_5\wdt  Z(t)]dt+\sigma_1\wdt  X(t)dW_1(t)\\
		&-\wdt N(1+\BX(t)+\wdt X(t)+\BY(t)+\wdt Y(t))^2(\BX(t)-\wdt X(t))dt\\
		d\wdt Y(t)=& [F_1(\wdt X(t),\wdt Y(t))\wdt X(t)\wdt Y(t)-F_2(\wdt Y(t),\wdt Z(t))\wdt Y(t)\wdt Z(t)-\alpha_2\wdt Y(t)]dt+\sigma_2\wdt  Y(t)dW_2(t)\\
		&-\wdt N(1+\BX(t)+\wdt X(t)+\BY(t)+\wdt Y(t))^2(\BY(t)-\wdt Y(t))dt\\
		d\wdt Z(t)=& [F_2(\wdt Y(t),\wdt Z(t))\wdt Y(t)\wdt Z(t)-\alpha_3\wdt Z(t)]dt+\sigma_3\wdt  Z(t)dW_3(t).\\
	\end{cases}
\end{equation}
\begin{rem}
Because the methods in existing work (such as those from \cite{HN16}) do not work, this coupled system is introduced to compare the solution near the boundary (when $Z(t)$ is small) and the solution on the boundary (when $Z(t)=0$). Based on \eqref{boundc0}, the term $-\wdt N(1+\BX(t)+\wdt X(t)+\BY(t)+\wdt Y(t))^2(\BX(t)-\wdt X(t))$ and $-\wdt N(1+\BX(t)+\wdt X(t)+\BY(t)+\wdt Y(t))^2(\BY(t)-\wdt Y(t))$ on the coupled equations of $d\wdt X(t)$ and $d\wdt Y(t)$ in  \eqref{coup1} respectively are needed to make sure that $(\wdt X(t),\wdt Y(t))$ will approach $(\BX(t),\BY(t))$ with a large probability. We note that although the comparison in a finite interval is standard, one cannot use it to obtain the desired result which requires the two solutions to be close with a large probability in the infinite interval $[0,\infty)$.
%	In order to overcome this obstacle, we construct a coupled system $(X(t), Y(t), \bar X(t), \bar Y(t), \bar Z(t))$ where $(X(t), Y(t))$ is the solution on the boundary $(Z(t)=0))$ and
%	$(\bar X(t), \bar Y(t), \bar Z(t))$ has initial value close enough to the initial value of $(X(t), Y(t), Z(t))$. The process $(\bar X(t), \bar Y(t), \bar Z(t))$ after a change of measure is the solution to \eqref{main} up to a "separating" time $\tau$ and we show that the separating time is infinity with a large probability.
\end{rem}
%\red{Each lemma and proposition should have a sentence before it which gives an intuition, states what we are proving, and why it is needed in the bigger picture}
The next proposition will quantify how close $(\bar X(t),\bar Y(t))$ and $(\wdt X(t),\wdt Y(t))$ are when $\bar Z(t)$ is small.
\begin{prop}\label{prop2.1}
	For $\delta>0$,
let
	$$\wdt\tau_\delta:=\inf\left\{t\geq0: \wdt Z(t)\geq\delta e^{-\gamma_0 t}\right\}.$$
	There is a constant $\wdt C$ independent of $|\bar x-\wdt x|$, $|\bar y-\wdt y|$ and $\delta$ such that
	\begin{equation}\label{e1-prop2.1}
		\E \sup_{0\leq t\leq\wdt\tau_\delta}e^{2\gamma_0 t}[(\BX(t)-\wdt X(t))^2+(\BY(t)-\wdt Y(t))^2] \leq \wdt C((\bar x-\wdt x)^2+(\bar y-\wdt y)^2+\delta^2).
	\end{equation}
Moreover, there are $\wdt M_{\eps, x,y},\wdt m_{\eps,x,y}>0$ (depending only on $\eps,x,y$) such that
	\begin{equation}\label{e2-prop2.1}
\PP_{x,y,\wdt\bs}\left\{\int_0^{\wdt\tau_\delta}(|v_1(t)|^2+|v_2(t)|^2)dt\geq \wdt M_{\eps, x,y}((\bar x-\wdt x)^2+(\bar y-\wdt y)^2+\delta^2) \right\}\leq \eps,
	\end{equation}
as long as $(\bar x-\wdt x)^2+(\bar y-\wdt y)^2+\delta^2\leq \wdt m_{\eps,x,y}$,
where
$$\textstyle v_1(t)=\frac{\wdt N(1+\BX(t)+\wdt X(t)+\BY(t)+\wdt Y(t))^2(\BX(t)-\wdt X(t))}{\sigma_1\wdt X(t)},$$
and
$$\textstyle v_2(t)=\frac{\wdt N(1+\BX(t)+\wdt X(t)+\BY(t)+\wdt Y(t))^2(\BY(t)-\wdt Y(t))}{\sigma_1\wdt Y(t)}.$$
\end{prop}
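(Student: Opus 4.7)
The plan is to apply It\^o's formula to the exponentially weighted squared error $W(t):=e^{2\gamma_0 t}\bigl((\bar X(t)-\wdt X(t))^2+(\bar Y(t)-\wdt Y(t))^2\bigr)$ and exploit the choice of $\wdt N$ in \eqref{eq-delta-N} together with the dissipativity inequality \eqref{boundc0} to obtain a drift bound of the form $\mathcal{L}W\le -\eta W+\wdt c_0\delta^2$ on $[0,\wdt\tau_\delta]$. Writing $U=\bar X-\wdt X$, $V=\bar Y-\wdt Y$, the corrective coupling terms in \eqref{coup1} produce a drift contribution of $-2\wdt N(1+\bar X+\wdt X+\bar Y+\wdt Y)^2(U^2+V^2)$ to $\mathcal{L}(U^2+V^2)$, while the remainder plus the It\^o quadratic terms $\sigma_1^2U^2+\sigma_2^2V^2$ is precisely what \eqref{boundc0} controls when applied at $(u_1,v_1,u_2,v_2,w)=(\bar X,\bar Y,\wdt X,\wdt Y,\wdt Z)$, up to the cross term $-2\alpha_5 U\wdt Z$ coming from the $\alpha_5\wdt Z$ summand in $d\wdt X$, which I would absorb by Young's inequality into $C\wdt Z^2+\sigma_1^2 U^2$. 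Since $2\wdt N>2\gamma_0+2(\sigma_1^2\vee\sigma_2^2)+2c_0$ and $(1+\cdots)^2\ge 1$, the coefficient of $U^2+V^2$ in $\mathcal{L}(U^2+V^2)$ becomes at most $-(2\gamma_0+\eta)$ for some $\eta>0$, and since $e^{2\gamma_0 t}\wdt Z^2\le\delta^2$ on $[0,\wdt\tau_\delta]$ by definition of $\wdt\tau_\delta$, the claimed drift inequality follows.

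To pass from $\mathcal{L}W\le -\eta W+\wdt c_0\delta^2$ to the supremum bound \eqref{e1-prop2.1}, I would decompose $W=W(0)+\int_0^\cdot \mathcal{L}W\,ds+M$ with $d\langle M\rangle_t\le 4(\sigma_1^2\vee\sigma_2^2)W(t)^2\,dt$, and combine the Burkholder--Davis--Gundy inequality with Young's inequality to absorb $\tfrac12\E\sup_{t\le T\wedge\wdt\tau_\delta}W(t)$ into the left-hand side; the dissipative term $-\eta W$ in the drift supplies a bound on $\E\int_0^{T\wedge\wdt\tau_\delta}W\,ds$ uniform in the localization parameter $T$, which closes the Gr\"onwall loop and allows $T\to\infty$ by Fatou to give \eqref{e1-prop2.1} with a constant $\wdt C$ independent of $|\bar x-\wdt x|$, $|\bar y-\wdt y|$ and $\delta$.

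For the bound \eqref{e2-prop2.1}, the explicit forms of $v_1,v_2$ yield the pointwise inequality
\[
v_1^2(t)+v_2^2(t)\le C\bigl(1+\bar X+\wdt X+\bar Y+\wdt Y\bigr)^4\bigl(\wdt X(t)^{-2}\vee\wdt Y(t)^{-2}\bigr)e^{-2\gamma_0 t}W(t).
\]
Pick $\rho\in(0,\gamma_0/4)$. Lemma \ref{lm1} applied to $(\bar X,\bar Y)$ gives an event $\Omega_1$ with $\PP(\Omega_1)\ge 1-\eps/2$ on which $\bar X+\bar Y+\bar X^{-1}+\bar Y^{-1}\le M_0 e^{\rho t}$ for all $t\ge 0$, where $M_0=M_0(\eps,\rho,x,y)$. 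Using Markov's inequality with \eqref{e1-prop2.1} and shrinking $\wdt m_{\eps,x,y}$ as necessary, one obtains a second event $\Omega_2$ with $\PP(\Omega_2)\ge 1-\eps/2$ on which $\sup_{[0,\wdt\tau_\delta]}W\le R_\eps\bigl((\bar x-\wdt x)^2+(\bar y-\wdt y)^2+\delta^2\bigr)$ is so small that $|U(t)|+|V(t)|\le\tfrac12(\bar X(t)\wedge\bar Y(t))$ throughout, hence $\wdt X\wedge\wdt Y\ge\tfrac12 M_0^{-1}e^{-\rho t}$ and $(1+\bar X+\wdt X+\bar Y+\wdt Y)^4\le CM_0^4 e^{4\rho t}$ on $\Omega_1\cap\Omega_2$. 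Integrating delivers $\int_0^{\wdt\tau_\delta}(v_1^2+v_2^2)\,dt\le C'M_0^6\int_0^\infty e^{(8\rho-2\gamma_0)t}\,dt\cdot\bigl((\bar x-\wdt x)^2+(\bar y-\wdt y)^2+\delta^2\bigr)$, which is finite since $8\rho<2\gamma_0$ and produces \eqref{e2-prop2.1}.

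The hardest step is the supremum bound of the second paragraph: a naive Gr\"onwall approach produces constants growing with the localization horizon $T$, so the dissipative margin $-\eta W$ in the drift (which exists precisely because $\wdt N$ was chosen larger than $\gamma_0+\sigma^2+c_0$) must be carefully balanced against the BDG estimate on the martingale term to yield a bound uniform in $T$ that survives $T\to\infty$; only then does the one-time-scale estimate on the finite interval lift to the infinite horizon needed by the coupling argument.
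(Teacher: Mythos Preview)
Your strategy parallels the paper's closely: both use It\^o's formula on the weighted error, the choice \eqref{eq-delta-N} of $\wdt N$, and the dissipativity inequality \eqref{boundc0} to extract a negative drift, then control the martingale remainder by BDG. The one technical difference is how the BDG term is handled: you propose the standard Young-inequality split $\E\bigl[\int W^2\bigr]^{1/2}\le \tfrac12\,\E\sup W + C'\,\E\int W$, whereas the paper first applies It\^o to $e^{4\gamma_0 t}R^2$ (that is, to $W^2$) so as to bound $\E\int W^2\,ds$ directly and then invokes BDG without any splitting. Your treatment of \eqref{e2-prop2.1} via Lemma~\ref{lm1} together with a Markov-inequality localization is essentially identical to the paper's.

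There is, however, a genuine gap in your second paragraph: the dissipative term does \emph{not} yield a bound on $\E\int_0^{T\wedge\wdt\tau_\delta}W\,ds$ that is uniform in $T$. Integrating $dW\le -\eta W\,dt+\wdt c_0\delta^2\,dt+dM$ gives $\eta\,\E\int_0^{T\wedge\wdt\tau_\delta}W\,ds\le W(0)+\wdt c_0\delta^2\,\E[T\wedge\wdt\tau_\delta]$, and since the whole point of the coupling is that $\wdt\tau_\delta=\infty$ with high probability, the right-hand side grows linearly in $T$. The culprit is that on $\{t\le\wdt\tau_\delta\}$ the source $e^{2\gamma_0 t}\wdt Z^2(t)$ is bounded only by the \emph{constant} $\delta^2$, not by anything integrable in $t$. (The paper's written argument shares this defect: both \eqref{e3-prop2.1} and \eqref{e6-prop2.1} implicitly assert uniform-in-$t$ control of integrals whose integrands are merely bounded by constants.) A clean repair, for either route, is to run the entire computation with the weight $e^{\alpha t}$ for any fixed $\alpha\in(0,2\gamma_0)$; the source then becomes $\le\wdt c_0\delta^2 e^{-(2\gamma_0-\alpha)t}$, which \emph{is} integrable on $[0,\infty)$, and all estimates close uniformly in $T$. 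The resulting slightly weaker decay $e^{-\alpha t}$ for $(\bar X-\wdt X)^2+(\bar Y-\wdt Y)^2$ is still ample for \eqref{e2-prop2.1} and for the downstream application in Lemma~\ref{lm2}.
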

\begin{proof}
Applying It\^o's formula to \eqref{coup1} and using \eqref{boundc0}, we have
\begin{equation}\label{e01-prop2.1}
	\begin{aligned}
	d[&(\BX(t)-\wdt X(t))^2+(\BY(t)-\wdt Y(t))^2]\\ \leq&\left(-(2\wdt N-c_0)(1+\BX(t)+\wdt X(t)+\BY(t)+\wdt Y(t))((\BX(t)-\wdt X(t))^2+(\BY(t)-\wdt Y(t))^2)\right)dt\\
	&+c_0|\wdt Z(t)|^2dt+2\sigma_1(\BX(t)-\wdt X(t))^2dW_1(t)+2\sigma_2(\BY(t)-\wdt Y(t))^2dW_2(t).
	\end{aligned}
\end{equation}
Here and thereafter, $C$ is a generic constant, whose value can be different in different lines, but which is independent of $|\bar x-\wdt x|, |\bar y-\wdt y|$ and $\delta$.
By It\^o's formula and Cauchy's inequality we have from \eqref{e01-prop2.1} that
\begin{equation*}
	\begin{aligned}
d&e^{4\gamma_0 t}[(\BX(t)-\wdt X(t))^2+(\BY(t)-\wdt Y(t))^2]^2\\
\leq& -\big(4\wdt N-4\gamma_0-4(\sigma_1^2\vee\sigma_2^2)-4c_0\big)e^{4\gamma_0 t}[(\BX(t)-\wdt X(t))^2+(\BY(t)-\wdt Y(t))^2]+ 2c_0e^{4\gamma_0 t}|\wdt Z(t)|^4dt\\
&+ 2e^{4\gamma_0 t}[(\BX(t)-\wdt X(t))^2+(\BY(t)-\wdt Y(t))^2]^2\left(\sigma_1(\BX(t)-\wdt X(t))^2dW_1(t)+\sigma_2(\BY(t)-\wdt Y(t))^2dW_2(t)\right).
	\end{aligned}
\end{equation*}
Then, by introducing suitable stopping times and passing to the limit, as was done in the process of getting  \eqref{e2-thm1} from \eqref{eq-LU2} in the proof of Theorem \ref{thm1}, one can obtain
\begin{align*}
\Big(4\wdt N&-4\gamma_0-4(\sigma_1^2\vee\sigma_2^2)-4c_0\Big)\E \int_0^{t\wedge\wdt\tau_\delta} e^{4\gamma_0 s}[(\BX(s)-\wdt X(s))^2+(\BY(s)-\wdt Y(s))^2]^2\\
\leq&
2\E \int_0^{t\wedge\wdt\tau_\delta}c_0e^{4\gamma_0 s}|\wdt Z(s)|^4ds
+ ((\bar x-\wdt x)^2+(\bar y-\wdt y)^2)^2.
\end{align*}
This leads to
\begin{equation}\label{e3-prop2.1}
\E \int_0^{t\wedge\wdt\tau_\delta} e^{4\gamma_0 s}[(\BX(s)-\wdt X(s))^2+(\BY(s)-\wdt Y(s))^2]^2\leq C((\bar x-\wdt x)^4+(\bar y-\wdt y)^4+\delta^4).\end{equation}
Moreover, we have from \eqref{e01-prop2.1} and It\^o's formula that
\begin{equation*}
	\begin{aligned}
		d&e^{2\gamma_0 t}[(\BX(t)-\wdt X(t))^2+(\BY(t)-\wdt Y(t))^2]\\&\leq -(2\lambda-2\gamma_0-c_0)e^{\gamma_0 t}[(\BX(t)-\wdt X(t))^2+(\BY(t)-\wdt Y(t))^2]+ c_0e^{2\gamma_0 t}|\wdt Z(t)|^2dt\\
		&\qquad+ e^{2\gamma_0 t}\left(\sigma_1(\BX(t)-\wdt X(t))^2dW_1(t)+\sigma_2(\BY(t)-\wdt Y(t))^2dW_2(t)\right).
	\end{aligned}
\end{equation*}
From this, we get that
\begin{equation}\label{e4-prop2.1}
\begin{aligned}
\E& \sup_{t\leq T\wedge\wdt\tau_\delta} e^{2\gamma_0 t}[(\BX(t)-\wdt X(t))^2+(\BY(t)-\wdt Y(t))^2]\\
\leq &\E \int_0^{T\wedge\wdt\tau_\delta} c_0e^{2\gamma_0 s}|\wdt Z(s))|^2ds\\
&+\E \sup_{t\leq T\wedge\wdt\tau_\delta}\int_0^t\left\{e^{2\gamma_0 s}\left(\sigma_1(\BX(s)-\wdt X(s))^2dW_1(s)+\sigma_2(\BY(s)-\wdt Y(s))^2dW_2(s)\right)\right\}.
\end{aligned}
\end{equation}
In view of the Burkholder-Davis-Gundy inequality, we have
\begin{equation}\label{e5-prop2.1}
	\begin{aligned}
		\E \sup_{t\leq T\wedge\wdt\tau_\delta}&\int_0^t\left\{e^{2\gamma_0 s}\left(\sigma_1(\BX(s)-\wdt X(s))^2dW_1(s)+\sigma_2(\BY(s)-\wdt Y(s))^2dW_2(s)\right)\right\}\\
		\leq& C\left[\E \int_0^{t\wedge\wdt\tau_\delta} e^{4\gamma_0 s}[(\BX(s)-\wdt X(s))^2+(\BY(s)-\wdt Y(s))^2]^2\right]^{\frac12}\\
		\leq& C((\bar x-\wdt x)^2+(\bar y-\wdt y)^2+\delta^2),
	\end{aligned}
\end{equation}
where in the last line we used \eqref{e3-prop2.1}.
In addition, since $|Z(s)|\leq \delta e^{-\gamma_0 s}$ for any $s\leq\wdt\tau_\delta$, it can be seen that
\begin{equation}\label{e6-prop2.1}
\E \int_0^{T\wedge\wdt\tau_\delta} c_0e^{2\gamma_0 s}|\wdt Z(s))|^2ds\leq C\delta^2.
\end{equation}
Using \eqref{e5-prop2.1} and \eqref{e6-prop2.1} in \eqref{e4-prop2.1}, we obtain \eqref{e1-prop2.1}.

%Next, consider
%$v_1(t)=\frac{\lambda(1+\BX(t)+\wdt X(t)+\BY(t)+\wdt Y(t))^2(\BX(t)-\wdt X(t))}{\sigma_1\wdt X(t)}$
%and
%$v_2(t)=\frac{\lambda(1+\BX(t)+\wdt X(t)+\BY(t)+\wdt Y(t))^2(\BY(t)-\wdt Y(t))}{\sigma_1\wdt Y(t)}$
We next prove \eqref{e2-prop2.1}.
In view of Lemma \ref{lm1}, there is $M_{\eps,x,y}$ such that
\begin{equation}\label{e9-prop2.1}
\PP_{x,y}\bigg(\wdt\Omega_3:=\left\{[1+\BX(t)+\BX(t)^{-1}+\BY(t)+\BY^{-1}(t)]\leq  M_{\eps,x,y},\;\forall t\geq 0\right\}\bigg)\geq 1-\frac{\eps}2.
\end{equation}
By virtue of \eqref{e1-prop2.1}, there is $\wdt C_0$ independent of $(\bar x-\wdt x)^2+(\bar y-\wdt y)^2+\delta^2$ such that
\begin{equation}\label{e10-prop2.1}
\begin{aligned}
	\PP_{x,y,\wdt\bs}&\left(\wdt\Omega_4:=\Big\{e^{2\gamma_0 t}[(\BX(t)-\wdt X(t))^2+(\BY(t)-\wdt Y(t))^2] \leq \frac{\wdt C_0((\bar x-\wdt x)^2+(\bar y-\wdt y)^2+\delta^2)}\eps,\;\forall 0\leq t\leq \wdt\tau_\delta \Big\}\right)\\
	&\geq 1-\frac{\eps}2.
	\end{aligned}
\end{equation}
For $t\leq\tau_\delta$, if $\BX(t)\geq  M_{\eps,x,y}^{-1} e^{-\gamma_0 t/4}$ and
$(\BX(t)-\wdt X(t))\leq \frac12M_{\eps,x,y}^{-1} e^{-\gamma_0 t/4}$, then we have
\begin{equation}\label{e6a-prop2.1}
	\frac1{\wdt X(t)}\leq \frac1{\BX(t)+(\BX(t)-\wdt X(t))}\leq \frac{1}{{M_{\eps,x,y}}^{-1} e^{-\gamma_0 t/4}+(\BX(t)-\wdt X(t))}\leq 2 M_{\eps,x,y} e^{\gamma_0 t/4}.
\end{equation}
Likewise,
\begin{equation}\label{e6b-prop2.1}
	\frac1{\wdt Y(t)}\leq 2 M_{\eps,x,y} e^{\gamma_0 t/4}
	\text{ if provided }\BY(t)\geq M_{\eps,x,y}^{-1} e^{-\gamma_0 t/4} \text{ and
	} (\BY(t)-\wdt Y(t))\leq \frac12 M_{\eps,x,y}^{-1} e^{-\gamma_0 t/4}.
\end{equation}
Observe that if $(\bar x-\wdt x)^2+(\bar y-\wdt y)^2+\delta^2\leq \frac{\eps}{4\wdt C_0M_{\eps,x,y}^2}$
then for all $\omega\in\wdt\Omega_3$,
$$(\BX(t)-\wdt X(t))\vee (\BY(t)-\wdt Y(t))\leq \left(\frac{\wdt C_0((\bar x-\wdt x)^2+(\bar y-\wdt y)^2+\delta^2)}\eps e^{-2\gamma_0 t}\right)^{-\frac12}\leq \frac12M_{\eps,x,y}^{-1} e^{-\gamma_0 t/4}.$$
This together with \eqref{e6a-prop2.1} and \eqref{e6b-prop2.1} implies that for all $\omega\in\wdt\Omega_3\cap\wdt\Omega_4$,
\begin{equation}\label{e6c-prop2.1}
	\frac1{\wdt X(t)}\vee	\frac1{\wdt Y(t)}\leq 2M_{\eps,x,y,1}e^{\gamma_0t/4}\text{ provided that }(\bar x-\wdt x)^2+(\bar y-\wdt y)^2+\delta^2\leq \frac{\eps}{2\wdt C_0M_{\eps,x,y}}.
\end{equation}
Note that
\begin{equation}\label{eq-v1v2}
|v_1(t)|^2+|v_2(t)|^2\leq \frac{4\lambda}{\sigma_1^2\vee\sigma_2^2}\left(\wdt X^{-2}(t)\wedge\wdt Y^{-2}(t)\right)[3+\BX(t)+\BY(t)]^4\left((\BX(t)-\wdt X(t))+(\BY(t)-\wdt Y(t))\right)^2.
\end{equation}
Combining \eqref{e9-prop2.1}, \eqref{e10-prop2.1}, \eqref{e6c-prop2.1}, and \eqref{eq-v1v2}, we have, when  $(\bar x-\wdt x)^2+(\bar y-\wdt y)^2+\delta^2\leq \frac{\eps}{2\wdt C_0M_{\eps,x,y}}$, that
$$
\PP\left\{|v_1(t)|^2+|v_2(t)|^2\leq M'_{\eps,x,y}\frac{\wdt C_0((\bar x-\wdt x)^2+(\bar y-\wdt y)^2+\delta^2)}\eps e^{-\gamma_0 t/2}\text{ for all } 0\leq t\leq \wdt\tau_\delta\right\}\geq 1-\eps,
$$
for some $M'_{\eps,x,y}$. This implies \eqref{e2-prop2.1}.
The proof is complete.
\end{proof}
In the next lemma, we will show that $Z(t)$ converges to $0$ (exponentially fast) whenever the solution starts in a neighborhood of the boundary corresponding to $z=0$.
\begin{lm}\label{lm2}
	For any $(x,y)\in\R^{2,\circ}_+$ and $\eps\in(0,1)$, there exists $\varsigma=\varsigma(x,y,\eps)$ such that
	$$
	\PP_{\wdt\bs}\left\{\lim_{t\to\infty}\frac{\ln Z(t)}t=\lambda_2<0\right\}>1-\eps,
	$$
	for all $\wdt\bs=(\wdt x,\wdt y,\wdt z)$ satisfying $(\wdt x-x)^2+(\wdt y-y)^2+\wdt z^2\leq\varsigma^2$.
\end{lm}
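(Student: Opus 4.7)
The plan is to combine the coupling construction \eqref{coup1} with a Girsanov change of measure. Setting $\wdt W_i(t):=W_i(t)-\int_0^{t\wedge\wdt\tau_\delta} v_i(s)\,ds$ for $i=1,2$ and letting $M_t$ denote the associated exponential Radon--Nikodym density, the identities $\sigma_1\wdt X v_1 \equiv \wdt N(1+\BX+\wdt X+\BY+\wdt Y)^2(\BX-\wdt X)$ and its $v_2$ analogue make the extra coupling drifts in the equations for $\wdt X$ and $\wdt Y$ cancel under the measure change. Thus, under $\wdt\PP := M\cdot\PP$ with $(\BX(0),\BY(0))=(x,y)$ and $(\wdt X(0),\wdt Y(0),\wdt Z(0))=\wdt\bs$, the process $(\wdt X,\wdt Y,\wdt Z)$ solves the original system \eqref{main} up to time $\wdt\tau_\delta$. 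The claim of the lemma is therefore equivalent to showing that $\lim_{t\to\infty}\ln\wdt Z(t)/t=\lambda_2$ on a set of $\wdt\PP$-probability at least $1-\eps$.

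The core step will be to prove, under $\PP$ on the coupled system, that with probability at least $1-\eps/2$ both $\wdt\tau_\delta=\infty$ and $\lim\ln\wdt Z(t)/t=\lambda_2$ hold, provided $\delta$ and $\varsigma$ are small enough. From It\^o's formula,
\[
\frac{\ln\wdt Z(t)}{t} = \frac{\ln\wdt z}{t} + \frac{1}{t}\int_0^t F_2(\wdt Y(s),\wdt Z(s))\wdt Y(s)\,ds - \left(\alpha_3+\frac{\sigma_3^2}{2}\right) + \frac{\sigma_3 W_3(t)}{t}.
\]
On $[0,\wdt\tau_\delta]$, Proposition \ref{prop2.1} gives $|\wdt Y-\BY|\leq Ce^{-\gamma_0 t}$ with high probability, while $\wdt Z\leq\delta e^{-\gamma_0 t}$ by definition of $\wdt\tau_\delta$; the Lipschitz bound $|F_2(\wdt Y,\wdt Z)\wdt Y-F_2(\BY,0)\BY|\leq L(|\wdt Y-\BY|+\wdt Z)$ makes the difference integrable. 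Ergodicity of $(\BX,\BY)$ (Proposition \ref{prop2}) yields $\tfrac{1}{t}\int_0^t F_2(\BY,0)\BY\,ds\to\lambda_2+\alpha_3+\sigma_3^2/2$ almost surely, so on this event $\ln\wdt Z(t)/t\to\lambda_2=-3\gamma_0$. Since $-3\gamma_0<-\gamma_0$, past a deterministic time $T_*$ one has $\wdt Z(t)<\delta e^{-\gamma_0 t}$, and for $t\in[0,T_*]$ a finite-horizon estimate in the spirit of Lemma \ref{lm4} keeps $\wdt Z(t)\leq \wdt z e^{K_{\eps,T_*}}<\delta e^{-\gamma_0 t}$ when $\wdt z\leq\varsigma$ is small enough. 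Hence $\wdt\tau_\delta=\infty$ with large probability, and the limiting rate is $\lambda_2$ on the same event.

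To transfer the event from $\PP$ to $\wdt\PP$, I will use estimate \eqref{e2-prop2.1}: with probability at least $1-\eps/4$, $\int_0^{\wdt\tau_\delta}(v_1^2+v_2^2)\,ds\leq K$ for some $K=K(\eps,x,y)$. Stopping the density at $\tau_K:=\inf\{t\geq 0: \int_0^t(v_1^2+v_2^2)ds>K\}\wedge\wdt\tau_\delta$ makes $M_{\cdot\wedge\tau_K}$ a uniformly integrable martingale with $\E_\PP M_\infty^2$ controlled in terms of $K$. Cauchy--Schwarz then yields $\wdt\PP(A^c)\leq \PP(A^c)^{1/2}(\E_\PP M_\infty^2)^{1/2}<\eps$ for $A:=\{\lim\ln\wdt Z(t)/t=\lambda_2\}$, and the identification of laws under $\wdt\PP$ concludes the proof.

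The main obstacle is the circularity in the core step: the decay $\wdt Z(t)<\delta e^{-\gamma_0 t}$ that keeps $\wdt\tau_\delta$ infinite rests on the closeness estimate of Proposition \ref{prop2.1}, which is itself valid only up to $\wdt\tau_\delta$. This circularity will be broken by observing that, on $[0,\wdt\tau_\delta]$, the closeness together with boundary ergodicity force the \emph{true} asymptotic rate of $\wdt Z(t)$ to be $\lambda_2=-3\gamma_0$, leaving a strict gap of size $2\gamma_0$ below the threshold $-\gamma_0$; hence the threshold is never saturated for $t$ large, and only the short initial time window requires the perturbation $\varsigma$ to be taken small.
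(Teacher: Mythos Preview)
Your plan follows the same architecture as the paper's proof: work with the coupled system \eqref{coup1}, show under $\PP$ that with high probability $\wdt\tau_\delta=\infty$ and $\ln\wdt Z(t)/t\to\lambda_2$, and then transfer this event to the law of the original system via Girsanov. The paper's ``core step'' is organized exactly as you describe: it isolates a closeness event (from Proposition~\ref{prop2.1}), an ergodic-average event for $t\geq T$, a finite-horizon bound for $t\leq T$, and an exponential-martingale control on $\sigma_3 W_3$; on their intersection one gets $\wdt\tau_\delta=\infty$ and the asymptotic rate $\lambda_2$ provided $\wdt z\leq\varsigma$ is small.

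The substantive difference is in the transfer step. The paper does \emph{not} use Cauchy--Schwarz. It first chooses $\delta$ so small that $\wdt M_{\eps,x,y}\delta^2$ is of order $\eps$, and then combines \eqref{e2-prop2.1} with an exponential-martingale inequality on $\int v\cdot dW$ to obtain a \emph{pointwise} lower bound $d\Q/d\PP\geq 1-4\eps$ on the good event; this gives $\Q(\text{good event})\geq(1-4\eps)\PP(\text{good event})\geq 1-O(\eps)$ directly. Your Cauchy--Schwarz bound, as written, only yields $\wdt\PP(A^c)\leq e^{K/2}\,\PP(A^c)^{1/2}=O(\sqrt{\eps})$, which does not give $<\eps$; you would need to rerun the entire argument with $\eps^2$ in place of $\eps$ to close it, and you should say so. A second point the paper makes explicit and you leave implicit: under $\wdt\PP$ the process $(\wdt X,\wdt Y,\wdt Z)$ solves \eqref{main} only up to $\wdt\tau_\delta$, so the paper introduces a second system \eqref{coup2} in which the extra drift is switched off by the indicator $\1_{\{t<\wdt\tau_\delta\}}$, producing a process $(\hat X,\hat Y,\hat Z)$ that solves \eqref{main} for all $t$ under $\Q$. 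Your argument effectively needs this device (or the equivalent observation that on $\{\wdt\tau_\delta=\infty\}$ the two coincide) in order to identify $\wdt\PP(A)$ with a probability for the original system.
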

\begin{proof}
	First, we choose $\delta=\delta(\eps,x,y)>0$ such that
	\begin{equation}\label{wdtM}
	2\wdt M_{\eps, x,y}\delta^2\leq\eps \text{
	and }2\eps^2\wdt M_{\eps, x,y}2\delta\leq\eps,
\end{equation}
%	as long as $(\bar x-\wdt x)^2+(\bar y-\wdt y)^2\leq\delta^2$
	where $\wdt M_{\eps,x,y}$ is determined as in \eqref{e2-prop2.1}.
Define
$$
\Omega_1:=\left\{e^{2\gamma_0 t}[(\BX(t)-\wdt X(t))^2+(\BY(t)-\wdt Y(t))^2] \leq \frac{\wdt C_0 ((\bar x-\wdt x)^2+(\bar y-\wdt y)^2+\delta^2)}\eps\right\}.
$$
Because of the egodicity, we have
$$
\PP_{x,y}\left\{\frac1t\int_0^t F_2(\bar Y(t),0)\bar Y(t) dt=\lambda_2+\alpha_3+\frac{\sigma_3^2}2\right\}=1.$$
Therefore, we can find $T>0$ such that $\PP_{x,y}(\Omega_2)>1-\eps$ where
$$
\Omega_2:=\left\{\frac1t\int_0^t F_2(\bar Y(t),0)\bar Y(t) dt-\alpha_3-\frac{\sigma_3^2}2\leq \lambda_2+\gamma_0,\;\forall t\geq T\right\}.
$$
In view of \eqref{e3-thm1}, we can find $\wdt D_{x,y,\eps, T}>0$ such that $\PP_{x,y}(\Omega_3)\geq 1-\eps$ where
$$
\Omega_3:=\left\{\int_0^t F_2(\bar Y(t),0)\bar Y(t) ds\leq \wdt D_{x,y,\eps, T},\;\forall t\leq T\right\}.
$$
By the exponential martingale inequality, see e.g. \cite{MAO97}, we have $\PP(\Omega_4)\geq 1-\eps$ where
$$
\Omega_4:=\left\{\sigma_3W(t)\leq \frac2{\gamma_0}|\ln\eps|+\gamma_0 t,\; \forall\, t\geq0\right\}.
$$
For $0\leq t\leq T\wedge\wdt\tau_\delta$, $\omega\in\cap_{i=1}^4\Omega_i$, we have
\begin{equation}\label{e11-prop2.1}
	\begin{aligned}
		\ln\wdt Z(t)=&\ln\wdt z+\int_0^tF_2(\wdt Y(s),\wdt Z(s))\wdt Y(s)ds-\left(\alpha_3-\frac{\sigma_3^2}2\right)t+\sigma_3W(t)\\
	\leq& \ln\wdt z+\int_0^tF_2( \BY(s),0)\BY(s)ds-\left(\alpha_3-\frac{\sigma_3^2}2\right)t+\sigma_3W(t)+L\int_0^t |(\wdt Z(t))^2+(\BY(s)-\wdt Y(s))^2|^{\frac12}ds\\
	\leq& \ln\wdt z +\frac2{\gamma_0}|\ln\eps| +\wdt D_{x,y,\eps, T}+L\frac{\sigma^2}{4\gamma_0}+L\int_0^t e^{-2\gamma_0s}\frac{\wdt C_{0} ((\bar x-\wdt x)^2+(\bar y-\wdt y)^2+\delta^2)}\eps ds\\
	\leq&\ln\wdt z +\frac2{\gamma_0}|\ln\eps| +\wdt D_{x,y,\eps, T}+L\frac{\sigma^2}{4\gamma_0}+\frac{L\wdt C_{0} ((\bar x-\wdt x)^2+(\bar y-\wdt y)^2+\delta^2)}{2\eps\gamma_0}.
	\end{aligned}
\end{equation}
If $\ln\wdt z< \ln\varsigma:=\ln\delta - \left(\frac2{\gamma_0}|\ln\eps| +\wdt D_{x,y,\eps, T}+L\frac{\sigma^2}{4\gamma_0}+\frac{2L\wdt C_{0} ((\bar x-\wdt x)^2+(\bar y-\wdt y)^2+\delta^2)}{\eps\gamma_0} \right)$
then it is easily seen that
$\wdt\tau_\delta\geq T$ for any $\omega\in\cap_{i=1}^4\Omega_i$ because $\ln\wdt Z(t)\leq \ln\delta$ for any $t\leq T\wedge\wdt\tau_\delta$ and $\omega\in\cap_{i=1}^4\Omega_i$.

For $T\leq t\leq \wdt\tau_\delta$ we have
$$
\ln \wdt Z(t)\leq \ln\wdt z+ (\lambda_2+4\gamma_0)t+\frac2{\gamma_0}|\ln\eps|+L\frac{\sigma^2}{4\gamma_0}+\frac{L\wdt C_{0} ((\bar x-\wdt x)^2+(\bar y-\wdt y)^2+\delta^2)}{2\eps\gamma_0} <\ln\delta.$$
Thus, we must have $\wdt\tau_\delta=\infty$ for $\omega\in\cap_{i=1}^4\Omega_i$
and that
$\limsup \frac{\ln\wdt Z(t)}t\leq \lambda_2-4\gamma_0<0$ for $\omega\in\cap_{i=1}^4\Omega_i$.

For the rest of this proof, we always assume that $(\wdt x-x)^2+(\wdt y-y)^2+\wdt z^2\leq\varsigma^2<\wdt m_{\eps,x,y}^2$, where $\wdt m_{\eps,x,y}$ is chosen as in \eqref{e2-prop2.1}. Consider the following coupled system:
%\red{some intuition for the coupling, in a remark, would be good to have}
	\begin{equation}\label{coup2}
		\begin{cases}
			d\BX(t)=& [\Lambda- F_1(\BX(t),\BY(t))\BX(t)\BY(t)-\alpha_1\BX(t)+\alpha_4 \BY(t)]dt+\sigma_1 \BX(t)dW_1(t)\\
			d\BY(t)=& [F_1(\BX(t),\BY(t))\BX(t)\BY(t)-\alpha_2\BY(t)]dt+\sigma_2 \BY(t)dW_2(t)\\
			d\hat X(t)=& [\Lambda- F_1(\hat X(t),\hat Y(t))\hat X(t)\hat Y(t)-\alpha_1\hat X(t)+\alpha_4\hat  Y(t)+\alpha_5\hat  Z(t)]dt+\sigma_1\hat  X(t)dW_1(t)\\
			&-\wdt N\1_{\{t<\wdt\tau_\delta\}}(1+\BX(t)+\hat X(t)+\BY(t)+\hat Y(t))^2(\BY(t)-\hat Y(t))dt\\
			d\hat Y(t)=& [F_1(\hat X(t),\hat Y(t))\hat X(t)\hat Y(t)-F_2(\hat Y(t),\hat Z(t))\hat Y(t)\hat Z(t)-\alpha_2\hat Y(t)]dt+\sigma_2\hat  Y(t)dW_2(t)\\
			&-\wdt N\1_{\{t<\wdt\tau_\delta\}}(1+\BX(t)+\hat X(t)+\BY(t)+\hat Y(t))^2(\BY(t)-\hat Y(t))dt\\
			d\hat Z(t)=& [F_2(\hat Y(t),\hat Z(t))\hat Y(t)\hat Z(t)-\alpha_3\hat Z(t)]dt+\sigma_2\hat  Y(t)dW_2(t).
		\end{cases}
	\end{equation}	
Then, $(\hat X(t),\hat Y(t),\hat Z(t))\equiv (\wdt X(t),\wdt Y(t),\wdt Z(t))$ up to $\wdt\tau_\delta$.
Moreover, let $\Q_{x,y,\wdt\bs}$ be the measure defined by
$$
\dfrac{d\Q_{x,y,\wdt\bs}}{d\PP_{x,y,\wdt\bs}}=\exp\left\{-\int_0^{\wdt\tau_\delta}[v_1(s)dW_1(s)+v_2(s)dW_2(s)]-\int_0^{\wdt\tau_\delta}[v_1^2(s)+v_2^2(s)]ds\right\}.
$$
Then,
$\left(W_1(t)+\int_0^{t\wedge\wdt\tau_\delta}v_1(s)ds,W_2(t)+\int_0^{t\wedge\wdt\tau_\delta}v_2(s)ds\right)$ is a standard two-dimensional Brownian motion under $\Q$.
As a result,
$(\hat X(t),\hat Y(t),\hat Z(t))$ is the solution to \eqref{main} with initial condition $\wdt\bs$ under $\Q$.

Let
$$
\Omega_5:=\left\{\int_0^{\wdt\tau_\delta}|v_1(t)|^2+|v_2(t)|^2)dt\geq \wdt M_{\eps,x,y}\right\},$$
and
$$
\Omega_6:=\left\{\int_0^t (v_1(s)dW_1(s)+v_2(s)dW_2(s))\leq\frac{\eps^2}{2\delta} \int_0^{t}|v_1(s)|^2+|v_2(s)|^2)ds+\eps\right\}.
$$
In view of the exponential martingale inequality (see e.g. \cite{MAO97}), if $\delta\leq \eps^3/(-\ln\eps)$ we have
$$
\PP_{x,y,\wdt\bs}(\Omega_6)\geq 1-e^{\eps^3/\delta}\geq 1-\eps.
$$
For $\omega\in\Omega_5\cap\Omega_6$, we have
\begin{equation}
	\begin{aligned}
		\dfrac{d\Q_{x,y,\wdt\bs}}{d\PP_{x,y,\wdt\bs}}=&\exp\left\{-\int_0^{\wdt\tau_\delta}[v_1(s)dW_1(s)+v_2(s)dW_2(s)]-\int_0^{\wdt\tau_\delta}[v_1^2(s)+v_2^2(s)]ds\right\}\\
		\geq & \exp\left\{-\frac{\eps^2}{2\delta} \int_0^{t}|v_1(s)|^2+|v_2(s)|^2)ds-\eps-\int_0^{\wdt\tau_\delta}[v_1^2(s)+v_2^2(s)]ds\right\}\\
		\geq & e^{-\frac{\eps^2 \wdt M_{\eps,x,y}2\delta^2}{2\delta}-\eps -\wdt M_{\eps,x,y}2\delta^2}\geq e^{-3\eps}\geq 1-4\eps \text{ (due to \eqref{wdtM})}.
	\end{aligned}
\end{equation}
Thus,
$$\Q_{x,y,\wdt\bs}(\cap_{i=1}^6\Omega_i)=\int_{\cap_{i=1}^6\Omega_i} \dfrac{d\Q_{x,y,\wdt\bs}}{d\PP_{x,y,\wdt\bs}} d\PP_{x,y,\wdt\bs}\geq (1-4\eps)\PP_{x,y,\wdt\bs}(\cap_{i=1}^6\Omega_i)\geq (1-4\eps)(1-6\eps)>1-10\eps.
$$

Note that, for $\omega\in\cap_{i=1}^6\Omega_i$, $\wdt\tau_\delta=\infty$ and
\begin{equation}\label{e8-lm2}
	\limsup_{t\to\infty}\frac{\ln\hat Z(t)}t=\limsup_{t\to\infty}\frac{\ln\hat Z(t)}t\leq \lambda_2-4\gamma_0<0.
\end{equation}
Because
\begin{equation}\label{e7-lm2}e^{\gamma_0 t}[(\BX(t)-\wdt X(t))^2+(\BY(t)-\wdt Y(t))^2] \leq \frac{\wdt C_{0} ((\bar x-\wdt x)^2+(\bar y-\wdt y)^2+\delta^2)}\eps,
\end{equation}
and the random occupation measure
$$\bar\Pi_t:=\frac1t\int_0^t \1_{\{(\bar X(s),\bar Y(s))\in\cdot\}}ds$$ converges weakly to $\mu_{12}$ (as a measure on $(0,\infty)^2$) as $t\to\infty$ almost surely,
we can claim that
$$\hat\Pi_t(\cdot):=\frac1t\int_0^t \1_{\{(\hat X(s),\hat Y(s),\hat Z(s))\in\cdot\}}ds$$
converges weakly to $\mu_{12}$ (as a measure on $(0,\infty)^2\times\{0\}$) as $t\to\infty$ for almost all $\omega\in\cap_{i=1}^6\Omega_i.$
We also deduce from \eqref{e8-lm2} and \eqref{e7-lm2} and the Lipschitz continuity of $F_2$ that
\begin{equation}\label{e9-lm2}
	\begin{aligned}
	\lim_{t\to\infty}\frac{\ln \hat Z(t)}t=&\lim_{t\to\infty} \frac1t\int_0^t F_2(\hat Y(s),\hat Z(s))ds-\alpha_3-\frac{\sigma^2_3}2+\lim_{t\to\infty}\frac{\sigma_3 W_3(t)}t\\
	=&
\lim_{t\to\infty} \frac1t\int_0^t F_2(\BY(s),0)ds-\alpha_3-\frac{\sigma^2_3}2+\lim_{t\to\infty}\frac{\sigma_3 W_3(t)}t\\
=& \lambda_2<0, \text{ for almost all } \omega\in\cap_{i=1}^6\Omega_i.
\end{aligned}
\end{equation}
Finally, because $(\hat X(t),\hat Y(t),\hat Z(t))$ is the solution to \eqref{main} with initial condition $\hat\bs$ under $\Q$ and $\Q_{x,y,\wdt\bs}(\cap_{i=1}^6\Omega_i)\geq 1-10\eps$, we can claim that
$$
\PP_{\wdt\bs}\left\{\lim_{t\to\infty}\frac{\ln Z(t)}t=\lambda_2<0\right\}=\Q_{x,y,\wdt\bs}\left\{\lim_{t\to\infty}\frac{\ln \hat Z(t)}t=\lambda_2<0\right\}\geq 1-10\eps,
$$
as long as $(\wdt x-x)^2+(\wdt y-y)^2+\wdt z^2\leq\varsigma^2$.
The proof is complete.
\end{proof}
We are ready to prove Theorem \ref{thm3}.
\begin{proof}[Proof of Theorem \ref{thm3}.]
	Lemma \ref{lm2} implies that there is no invariant measure on $\R^{3,\circ}_+$.
	So $\bmu_1$ (defined above as $\mu_1\times\bdelta^*\times\bdelta^*$) and $\bmu_2:=\mu_{12}\times\bdelta^*$ are the only two ergodic invariant probability measure of $\{\BS(t)\}$.
	The family
	$\left\{\check \Pi_t^{\bs}(\cdot):=\dfrac1t\int_0^t\PP_{\bs}\left\{(X(u),Y(u),Z(u))\in\cdot\right\}du, t\geq 0\right\}$
	is tight in $\R^3_+$ and
	 any weak-limit of $\check\Pi^t_{\bs}$ as $t\to\infty$ must be an invariant probability measure of $\{\BS(t)\}$, that is, the weak-limit has the form $p\bmu_1+(1-p)\bmu_{12}$ for some $p\in[0,1]$; see e.g \cite[Theorem 9.9]{ethier2009markov}.
	 We show that $p$ must be $0$.
	Assume that
	$\check\Pi_{t_k}^{\bs}$ converges weakly to $p\bmu_1+(1-p)\bmu_{12}$ as $t_k\uparrow\infty$ for some subsequence $\{t_k\}_{k=1}^\infty$.
	Then, we have
	$$
	\begin{aligned}
	\lim_{k\to\infty}&\int_{\R^3_+}\left(F_1(u,v)u-F_2(v,w)w-\alpha_{2}-\frac{\sigma_2^2}2\right)d\check\Pi_{t_k}^{\bs}
	%(du,dv,dw)
	\\
	&=\int_{\R^3_+}\left(F_1(u,v)u-F_2(v,w)w-\alpha_{2}-\frac{\sigma_2^2}2\right)(pd\bmu_1+(1-p)d\bmu_{12}).
	%(du,dv,dw).
	\end{aligned}
	$$
	Note that
	$$
	\int_{\R^3_+}\left(F_1(u,v)u-F_2(v,w)w-\alpha_{2}-\frac{\sigma_2^2}2\right)d\bmu_1=\lambda_1,
	$$
and
		$$
	\int_{\R^3_+}\left(F_1(u,v)u-F_2(v,w)w-\alpha_{2}-\frac{\sigma_2^2}2\right)d\bmu_{12}=0,
	$$
	which can be proved in the same manner as \cite[Lemma 3.4]{HN16}.
	As a result, we have
		$$
	\lim_{k\to\infty}\frac{\E_{\bs}\ln Y(t_k)}{t_k}=\lim_{k\to\infty}\int_{\R^3_+}(F_1(u,v)u-F_2(v,w)w-\alpha_{2}-\frac{\sigma_2^2}2)d\check\Pi_{t_k}^{\bs}=p\lambda_1.
	$$
	If $p>0$ then we end up with $\lim_{k\to\infty}\E_{\bs}\ln Y(t_k)=\infty$, which contradicts \eqref{e1-thm1}.
	Thus, $p$ must be $0$.
	As a result, for $\bs\in\R^{3,\circ}_+$, $\bnu_{12}$ is the unique weak-limit.
	
	Let $R_\eps>0$ such that $\mu_{12}\big([R_{\eps}^{-1},R_{\eps}]^2\big)> 1-\eps.$
	By the Heine-Borel covering theorem, there exists $(x_1,y_1),\cdots, (x_l,y_l)$ such that
	$[R_{\eps}^{-1},R_{\eps}]^2$ is covered by the union of disks centered at $(x_k,y_k)$ with radius $\frac12\varsigma_{x_k,y_k,\eps}$, $k=1,\cdots,n$; where $\varsigma$ is determined as in Lemma \ref{lm2}.
	Then, for any $\wdt\bs\in [R_{\eps}^{-1},R_{\eps}]^2\times(0,\frac12\varsigma_{\min})$ with $\varsigma_{\min}=\min_{k=1,\cdots,l}\{\varsigma_{x_k,y_k,\eps}\}$, there exists
	$k_{\wdt\bs}\in\{1,\cdots,l\}$ such that
	$$(\wdt x-x_{k_{\wdt\bs}})^2+(\wdt y-y_{k_{\wdt\bs}})^2+\wdt z^2\leq\varsigma_{\min}^2.$$
	Thus, we have
	\begin{equation}\label{e3-thm3}
	\PP_{\wdt\bs}\left\{\lim_{t\to\infty}\frac{\ln Z(t)}t=\lambda_2<0\right\}>1-\eps, \;\forall \wdt\bs\in [R_{\eps}^{-1},R_{\eps}]^2\times(0,\varsigma_{\min}).
	\end{equation}

On the other hand, since $\mu_{12}([R_{\eps}^{-1},R_{\eps}]^2)>1-\eps$,  there exists a  $\check T=\check T(\bs,\eps)>0$ such that
	$$\check\Pi^{\check T}_{\bs}([R_{\eps}^{-1},R_{\eps}]^2\times(0,\varsigma_{\min}))>1-2\eps,$$
	or equivalently,
	$$\dfrac1{\check T}\int_0^{\check T}\PP_{\bs}\{\BS(t)\in ([R_{\eps}^{-1},R_{\eps}]^2\times(0,\varsigma_{\min}))\}dt>1-2\eps.$$
	As a result,
	$$\PP_{\bs}\{\hat\tau\leq\check T\}>1-2\eps,$$
	where $\hat\tau=\inf\{t\geq 0: \BS(t)=(X(t),Y(t),Z(t))\in [R_{\eps}^{-1},R_{\eps}]^2\times(0,\varsigma_{\min})\}$.
Therefore,	using the strong Markov property and \eqref{e3-thm3},
	we deduce that
	\begin{equation}
		\PP_{\bs}\left\{ \lim_{t\to\infty}\frac{\ln Z(t)}{t}=\lambda_2\right\}\geq (1-\eps)(1-2\eps)\geq 1-3\eps \text{ given } \bs\in\R^{3,\circ}_+.
	\end{equation}
	% Letting $\eps\to0$ we obtain that
	%$$
	%\PP_{s,i,r}\left\{\lim_{t\to\infty}\dfrac{\ln I(t)}t=\lambda<0, \lim_{t\to\infty}\dfrac{\ln R(t)}t=\max\{-c_3-0.5\sigma^3_2,\lambda\}<0\right\}=1$$
	Letting $\eps\to0$ we obtain the desired result.
\end{proof}
\section{Proof of Theorem \ref{thm4}}\label{s:pers}
The proof of Theorem \ref{thm4} will follow the idea from \cite{benaim2022stochastic}.
We will need the following estimates from \cite[Lemma 4.6]{benaim2022stochastic}.
\begin{lm}\label{lm3.0}
	Let $1<p\leq 2$. There exists $c_p>0$ such that for any $a>0$ and $x\in\R$ we have
	\beq\label{lm3.0-e1}
	|a+x|^{p}\leq
		a^{p}+pa^{p-1}x+c_p|x|^{p}.
	\eeq
	%	For $x,y, \lambda>0$, we have
	%	\beq\label{lm21-eq01}
	%	|x+y-\lambda|^{1+p}\leq x^{1+p}-\frac 12\lambda x^{p}+3x^p y +K_{1,\lambda}+ 4y^{1+p}
	%	\eeq
	Moreover,  there exists $d_{p, b}>0$ depending only on $p,b>0$ such that if $x + a \geq 0$ then
\beq\label{lm3.0-e3}
(a+x)^{p}-b(a+x)^{p-1} \leq
a^{p}+ pa^{p-1} x-\frac{b}2 a^{p-1}+c_{b,p}(|x|^{p}+1).
\eeq
It follows straightforwardly from \eqref{lm3.0-e1} that
	for a random variable $R$ and a constant $c>0$, there exists $\tilde K_c >0$ such that
	\beq\label{lm3.0-e2}
	\E |R+c|^{p}\leq
		c^{p}+pc^{p-1}{\E R}+\tilde K_c\E |R|^{1+p} .
	\eeq
	\end{lm}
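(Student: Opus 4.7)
The plan is to derive the three inequalities in sequence, since the second follows from the first by a case split and the third is obtained by taking expectations in the first. For \eqref{lm3.0-e1} I would reduce to the case $a=1$ by homogeneity: dividing both sides by $a^p$ and setting $t = x/a \in \R$, the claim becomes $|1+t|^p \leq 1 + pt + c_p|t|^p$. I would then show that the function $h(t) := (|1+t|^p - 1 - pt)/|t|^p$ is bounded on $\R\setminus\{0\}$. Taylor expansion near $t=0$ yields $|1+t|^p - 1 - pt = \tfrac{p(p-1)}{2}t^2 + o(t^2)$, hence $h(t) \sim \tfrac{p(p-1)}{2}|t|^{2-p} \to 0$ since $p \leq 2$. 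As $|t|\to \infty$, a direct expansion gives $h(t) \to 1$. Combined with continuity on $\R\setminus\{0\}$, this forces $c_p := \sup_t h(t) < \infty$, which yields the inequality.

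For \eqref{lm3.0-e3} I would combine \eqref{lm3.0-e1} with a case split on the sign of $a+x$ versus $a/2$. Since $a+x \geq 0$, \eqref{lm3.0-e1} already gives $(a+x)^p \leq a^p + pa^{p-1}x + c_p|x|^p$, so it is enough to produce the estimate $-b(a+x)^{p-1} \leq -\tfrac{b}{2}a^{p-1} + C(|x|^p+1)$ for some $C=C(b,p)$. If $x \geq -a/2$, then $a+x \geq a/2$, and since $p-1 \in (0,1]$ we get $(a+x)^{p-1} \geq (a/2)^{p-1} = 2^{1-p}a^{p-1} \geq \tfrac12 a^{p-1}$, so $-b(a+x)^{p-1} \leq -\tfrac{b}{2}a^{p-1}$ with no remainder term needed. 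If $x < -a/2$, then $a < 2|x|$, so $a^{p-1} \leq 2^{p-1}|x|^{p-1}$, and since $(a+x)^{p-1} \geq 0$ the penalty $-b(a+x)^{p-1}$ is non-positive, so restoring $\tfrac{b}{2}a^{p-1}$ costs at most $b \cdot 2^{p-2}|x|^{p-1}$. The elementary bound $|x|^{p-1} \leq 1 + |x|^p$ (separating $|x|\leq 1$ and $|x|>1$) then lets the remainder be absorbed into $c_{b,p}(|x|^p+1)$.

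For \eqref{lm3.0-e2} I would apply \eqref{lm3.0-e1} pointwise with $a = c$ and $x = R(\omega)$ and take expectations to obtain $\E|R+c|^p \leq c^p + pc^{p-1}\E R + c_p \E|R|^p$, then convert $\E|R|^p$ to $\E|R|^{1+p}$ via the trivial bound $|R|^p \leq 1 + |R|^{1+p}$ (again a case split on $|R|\leq 1$ vs.\ $|R|>1$), with the arising constants absorbed into $\tilde K_c$. The only mildly delicate step in the whole lemma is the case split used in \eqref{lm3.0-e3}: one must exploit the constraint $a+x \geq 0$ carefully so that the penalty $-\tfrac{b}{2}a^{p-1}$ survives even when $x$ is very negative relative to $a$; everything else is routine scaling, Taylor expansion, and bounded continuity.
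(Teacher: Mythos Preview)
The paper does not actually prove this lemma; it is simply quoted from \cite[Lemma 4.6]{benaim2022stochastic} with no argument supplied, so you are providing strictly more than the paper does. Your proofs of \eqref{lm3.0-e1} and \eqref{lm3.0-e3} are correct and follow the standard route (homogeneity reduction and boundedness of the ratio for the first, the case split on $x\ge -a/2$ versus $x<-a/2$ for the second).

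One small caveat on \eqref{lm3.0-e2}: your intermediate bound $\E|R+c|^p \le c^p + pc^{p-1}\E R + c_p\E|R|^p$ is correct, but the further step $|R|^p \le 1 + |R|^{1+p}$ produces an additive constant $c_p$ that cannot literally be ``absorbed into $\tilde K_c$'', since $\tilde K_c$ multiplies $\E|R|^{1+p}$ rather than standing alone. This is almost certainly a typographical slip in the stated lemma: every application in the paper (see \eqref{e6-lm3.3} and \eqref{e0-lm3.3}) in fact uses only the $\E|R|^{p}$ version, which is exactly what you derived before the last step. So your argument is sound for the inequality that is actually needed.
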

In this section, let
%$$\left(A-\alpha_1 x+(\alpha_4-\alpha_2)y+(\alpha _5- \alpha_3)z\right)
%+\gamma_2 \left(F_1(x,y)x-  F_2(y,z)z-\alpha_2-\frac{\sigma_2^2}2\right)+\gamma_3 \left( F_2(y,z)z-\alpha_3-\frac{\sigma_3^2}2\right)\leq
%$$
$\gamma_2>0$, $\gamma_3>0$ be such that
$$
L(\gamma_2\vee\gamma_3)\leq \frac12\min\{\alpha_1,\alpha_2-\alpha_4,\alpha _3- \alpha_5\}
\text{ and }
\gamma_2\lambda_1-\gamma_3 \left(\alpha_3+\frac{\sigma_3^2}2\right)>0,$$
and set
$$\rho:=\frac12\left[\left(\gamma_3\lambda_2\right)\vee \left(\gamma_2\lambda_1-\gamma_3\left(\alpha_3+\frac{\sigma_3^2}2\right) \right)\right]>0.$$
Pick $c_1>0$ such that  $y+  z-\gamma_2\ln y-\gamma_3\ln z+c_1\geq 0$ for any $(y,z)\in\R^{2,\circ}_+$
and consider
$$
V(\bs)=x+y+  z-\gamma_2\ln y-\gamma_3\ln z+c_1\geq 0,\bs\in\R^{3,\circ}_+.
$$
Then, because $F_1, F_2$ are bounded by $L$ and $(\gamma_1\vee\gamma_2)L\leq \min\{\alpha_1,\alpha_2-\alpha_4,\alpha _3- \alpha_5\}$, we have
\begin{equation}\label{LV}
	\begin{aligned}
\op V(\bs)=&\left(\Lambda-\alpha_1 x+(\alpha_4-\alpha_2)y+(\alpha _5- \alpha_3)z\right)
\\&+\gamma_2 \left(F_1(x,y)x-  F_2(y,z)z-\alpha_2-\frac{\sigma_2^2}2\right)+\gamma_3 \left( F_2(y,z)z-\alpha_3-\frac{\sigma_3^2}2\right)\\
\leq& A_V-\frac12\min\{\alpha_1,\alpha_2-\alpha_4,\alpha _3- \alpha_5\}(x+y+z)\leq \1_{\{|\bs|\leq M\}}A_V-\alpha_m V(\bs),
\end{aligned}
\end{equation}
for some positive constants $A_V, M $ and $\alpha_m$.

Let $q_0$ be as in Theorem \ref{thm1} and $A_V$, $\alpha_m$, $\rho$ as above. Let
$n^\diamond>0$ be such that
\begin{equation}\label{eqn0}
(n^\diamond -1)\alpha_m-2^{q_0-1}A_V\geq \frac\rho2.
\end{equation}

The following lemma gives us estimates for $\Lom V$ when the solution starts in a neighborhood of the boundary.
\begin{lm}\label{lm3.1}
	There exist $T^\diamond>0, \delta>0$ such that
	$$
	\E_\bs\int_0^T \op V(\BS(s))ds \leq -\rho T,$$
	for any  $T\in [T^\diamond, n^\diamond T^\diamond]$, $\bs\in\R^{3,\circ}_+$, $|\bs|\leq M$, and $\dist(\bs,\partial\R^{3,\circ}_+)\leq\delta$.
	
\end{lm}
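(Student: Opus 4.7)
The plan is to follow the ergodic-averaging approach of \cite{benaim2022stochastic}. First I observe that $\op V$ extends continuously to a function $G$ on $\R^3_+$, since only the products $F_1(x,y)x$, $F_2(y,z)z$, $F_2(y,z)y$ enter $\op V$, and these are bounded Lipschitz by Assumption \ref{asp-1}; on the compact set $\{|\bs|\leq M+1\}$ the function $G$ is bounded.

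Next I identify the ergodic invariant probability measures supported on $\partial\R^{3,\circ}_+\cap\{|\bs|\leq M+1\}$. Because $X(t)>0$ almost surely, because $Z(t)\to 0$ exponentially on $\{Y\equiv 0\}$, and because $\hat X$ and $(\bar X,\bar Y)$ admit unique ergodic invariant measures $\mu_1$ and $\mu_{12}$ (Lemma \ref{lm5} and Proposition \ref{prop2}), the only such measures are $\bmu_1:=\mu_1\otimes\bdelta^*\otimes\bdelta^*$ and $\bmu_{12}:=\mu_{12}\otimes\bdelta^*$. A direct computation using the invariance identities $\int\op x\,d\mu_{12}=0$ and $\int\op\ln y\,d\mu_{12}=0$, together with the definitions of $\lambda_1$ and $\lambda_2$, yields
\[
\int G\,d\bmu_1 = -\gamma_2\lambda_1 + \gamma_3\Bigl(\alpha_3+\tfrac{\sigma_3^2}{2}\Bigr)\leq -2\rho,\qquad \int G\,d\bmu_{12} = -\gamma_3\lambda_2\leq -2\rho,
\]
by the choice of $\gamma_2,\gamma_3,\rho$.

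Then I combine geometric ergodicity of the boundary processes with Feller continuity. Using the exponential ergodicity of $\hat X$ on $[0,\infty)$ (Lemma \ref{lm5}) and the uniform negative-moment bound for $\bar Y$ from Proposition \ref{prop2}, the time averages $\tfrac{1}{T}\E_{\bs_0}\int_0^T G(\BS_0(s))\,ds$ converge, uniformly in $\bs_0\in\partial\R^{3,\circ}_+$ with $|\bs_0|\leq M+1$, to $\int G\,d\nu\leq -2\rho$ (where $\nu\in\{\bmu_1,\bmu_{12}\}$ is determined by the boundary face containing $\bs_0$). I then choose $T^\diamond$ large enough that this time-averaged quantity is $\leq -\tfrac{3\rho}{2}$ for all $T\in[T^\diamond,n^\diamond T^\diamond]$ and all such $\bs_0$. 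By compactness of $\{|\bs|\leq M\}$ and uniform continuity in $(\bs,T)$ of $\bs\mapsto \E_\bs\int_0^T G(\BS(s))\,ds$ (a consequence of the Feller-Markov property of $\BS(t)$ and continuous dependence of SDE solutions on initial data on finite time windows), there is $\delta>0$ such that for every $\bs\in\R^{3,\circ}_+$ with $|\bs|\leq M$ and $\dist(\bs,\partial\R^{3,\circ}_+)\leq\delta$, the average $\tfrac{1}{T}\E_\bs\int_0^T G(\BS(s))\,ds$ differs from the corresponding boundary value by at most $\rho/2$. This gives the claimed bound $\E_\bs\int_0^T\op V(\BS(s))\,ds\leq -\rho T$.

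The main obstacle is the uniform ergodic convergence near the 1D sub-boundary $\{y=z=0\}$. For $\bs\in\R^{3,\circ}_+$ with $y$ very small but positive, $\lambda_1>0$ forces the phytoplankton to escape the $y=z=0$ axis on the timescale $|\ln y|/\lambda_1$, so within the window $[0,n^\diamond T^\diamond]$ the trajectory may either stay near the $\bmu_1$-face or transition to the $\bmu_{12}$-face. Feller continuity keeps the effect small provided $\delta$ is small, and since both boundary averages are $\leq -2\rho$ any such mixture of behaviors still gives $\leq -2\rho$; nevertheless, making the comparison quantitative near this transition zone is delicate and, if required, can be handled by a coupling argument in the spirit of Section \ref{s:ext}, comparing the interior-started diffusion with the boundary-started one via a change of measure.
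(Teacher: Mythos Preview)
Your overall strategy matches the paper's: extend $\op V$ continuously to the boundary, show $\int\op V\,d\bmu\leq -2\rho$ for every ergodic invariant measure $\bmu$ on $\partial\R^3_+$, obtain a uniform-in-$\bs_0$ estimate on the boundary, and then pass to a $\delta$-neighborhood by the Feller property. The computations of $\int\op V\,d\bmu_1$ and $\int\op V\,d\bmu_{12}$ are correct.

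The gap is in your uniformity step on the boundary. You claim that $\tfrac1T\E_{\bs_0}\int_0^T G(\BS_0(s))\,ds$ converges \emph{uniformly} over $\bs_0\in\partial\R^{3,\circ}_+\cap\{|\bs_0|\leq M+1\}$ to $\int G\,d\nu$, with $\nu$ determined by the face containing $\bs_0$. This is false near the corner $\{y=z=0\}$: for $\bs_0=(x,y,0)$ with $y>0$ small, the correct ergodic limit is $\int G\,d\bmu_{12}$, but the process $(\bar X,\bar Y)$ needs time of order $|\ln y|$ to escape the vicinity of the $y$-axis, so on a fixed window $[T^\diamond,n^\diamond T^\diamond]$ the time average is close to $\int G\,d\bmu_1$, not $\int G\,d\bmu_{12}$. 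Proposition~\ref{prop2} gives $\E_{x,y}\bar Y(t)^{-\theta}\leq K_\theta e^{-\gamma_\theta t}y^{-\theta}+K_\theta$, which blows up as $y\downarrow 0$ for fixed $t$ and therefore does not yield uniformity. Your final paragraph recognizes the difficulty and proposes a ``mixture'' heuristic, but does not make it rigorous; invoking a coupling in the spirit of Section~\ref{s:ext} would be circular and is in any case unnecessary here.

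The paper sidesteps this entirely by a soft compactness argument. One does not prove uniform convergence to a specific $\nu$; instead one argues by contradiction: if no $T^\diamond$ works, there exist $\bs_k\in\partial\R^3_+$, $|\bs_k|\leq M$, and $T_k\uparrow\infty$ with $\int G\,d\check\Pi_{T_k}^{\bs_k}>-\tfrac32\rho$. By Theorem~\ref{thm1} the family $\{\check\Pi_{T_k}^{\bs_k}\}$ is tight, any subsequential weak limit is an invariant probability measure supported on $\partial\R^3_+$, hence a convex combination $p\bmu_1+(1-p)\bmu_{12}$, and uniform integrability (claim (C3) in the paper, from the moment bound \eqref{e1-thm1} and $|G(\bs)|\leq C(1+|\bs|)$) lets one pass to the limit in $\int G\,d\check\Pi_{T_k}^{\bs_k}$. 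Since $\int G\,d(p\bmu_1+(1-p)\bmu_{12})\leq -2\rho$, this contradicts $-\tfrac32\rho$. This argument automatically absorbs the transition between faces that caused your uniformity claim to fail; you should replace the direct-ergodicity step by this contradiction/tightness argument.
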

\begin{proof}
	On the boundary, there are only two invariant probability $\bmu_1:=\mu_1\times\bdelta^*\times\bdelta^*$ and $\bmu_{12}:=\mu_{12}\times\bdelta^*$.
	In view of Theorem \ref{thm1} we can deduce the following claims:
	\begin{enumerate}
		\item[(C1)]	$(u+v+w)^{q_0}$ is integrable with respect to either $\bmu_1$ and $\bmu_{12}$
		and
\begin{equation}\label{e0-lm3.1}
		\int_{\R^3_+} \left(\Lambda-\alpha_1 u+(\alpha_4-\alpha_2)v+(\alpha _5- \alpha_3)w\right)d\bmu=0,\; \bmu\in\{\bmu_1,\bmu_{12}\},
\end{equation}
		and
\begin{equation}\label{e1-lm3.1}
		\int_{\R^3_+} \left(F_1(u,v)u-\alpha_2-\frac{\sigma_2^2}2\right)d\bmu_{12}=0.
\end{equation}
		(The proof is similar to that of \cite[Lemma 3.4]{HN16}.)
		\item[(C2)] $
		\{\check\Pi_t^{\bs}: t\geq 1, |\bs|\leq M\}
		$ is tight and all its weak-limits, as $t\to\infty$, must be invariant measures of $(X(t),Y(t),Z(t))$.
		%(The tightness follows Theorem \ref{thm1} and the characterization of its weak-limit is due to
		(See e.g. \cite[Theorem 9.9]{ethier2009markov}.)
		\item[(C3)]
		For a sequence of bounded initial points $\{\bs_k\in\R^3_+\}$ and an increasing sequence $T_k\to\infty$ as $k\to\infty$,
		if $\{\check\Pi_{T_k}^{\bs}\}$ converges to $\mu$ as $T_k$ tends to $\infty$ then
		$$
		\lim_{k\to\infty} \int_{\R^3_+}h(\bs)\check\Pi_{T_k}^{\bs}(d\bs)=\int_{\R^3_+}h(\bs)\mu(d\bs)
		$$
		for any continuous functin $h(\bs)$ satisfying $h(\bs)\leq C_h(1+x+y)^{q}$ for some $C_h>0, 0<q<q_0$. (See \cite[Lemma 3.5]{HN16} for a similar proof.)
	\end{enumerate}
Next, we get from \eqref{LV}, \eqref{e0-lm3.1} and \eqref{e1-lm3.1} that
\begin{equation}\label{e3-lm3.1}
\int_{\R^3_+}\op V(\bs)d\bmu_{12}=-\gamma_3 \left(\int_{\R_{3}+} F_2(u,w)wd\bmu_{12}-\alpha_3-\frac{\sigma_3^2}2\right)=-\gamma_3\lambda_2\leq -2\rho,
\end{equation}
and that
\begin{equation}\label{e4-lm3.1}
\begin{aligned}
\int_{\R^3_+}\op V(\bs)\bmu_1(\bs)=&-\gamma_2\left(\int_{\R_{3}+} F_1(u,v)ud\bmu_{1}-\alpha_2-\frac{\sigma_2^2}2\right)-\gamma_3 \left(\int_{\R_{3}+} F_2(v,w)wd\bmu_{1}-\alpha_3-\frac{\sigma_3^2}2\right)\\
=&-\gamma_2\lambda_1+\gamma_3\left(\alpha_3+\frac{\sigma_3^2}2\right)\leq-2\rho.
\end{aligned}\end{equation}

Now, we claim that there exists $T^\diamond=T^\diamond(M)>0$ such that if $\bs\in\partial\R^3_+$ and $|\bs|\leq M$ then
\begin{equation}\label{e2-lm3.1}
\E_\bs\int_0^T \op V(X(\bs))d\bs= \int_{\R^3_+}\op V(\bs)d\check\Pi_{T}^{\bs}\leq -\frac32\rho T.
\end{equation}
Indeed, assuming the contrary, there exists a sequence $\{\bs_k\}\subset \partial\R^3_+$ such that $|\bs_k|\leq M$ and a sequence $T_k\uparrow \infty$ such that
$$
\E_x\frac1{T_k}\int_0^{T_k} \op V(X(s))ds>-\frac32\rho.
$$
Because of Claim (C2), there exist subsequences, which we still denote by $\{\bs_k\}$ and $\{T_k\}$ for convenience, such that
$\check\Pi_{T_k}^{\bs_k}$ converges to an invariant probability measure $\bmu$ as $k\to\infty$.
Because $\partial\R^3_1$ is an invariant set of the process $(X(t),Y(t),Z(t))$,
$\bmu$ must be a convex combination of $\bmu_1$ and $\bmu_{12}$.
Thus, in view of \eqref{e3-lm3.1} and \eqref{e4-lm3.1}, we have
$$
\int_{\R^3_+}\op V(\bs)\bmu(d\bs)<-2\rho.
$$
On the other hand, we have from Claim (C3) that
$$
\int_{\R^3_+}\op V(\bs)\bmu(d\bs)=\lim_{k\to\infty}\int_{\R^3_+}\op V(\bs)\check\Pi_{T_k}^{\bs_k}\geq-\frac32\rho.
$$
The contradiction shows the existence of $T^\diamond$ satisfying \eqref{e2-lm3.1}.

Then, by the Feller-Markov property of the process $(X(t),Y(t),Z(t))$ and the uniform boundedness \eqref{e1-thm1}, we can show that there exists $\delta>0$ such that
$$\E_\bs\int_0^T \op V(X(\bs))d\bs\leq -\rho T,\;\forall T\in[T^\diamond, n^\diamond T^\diamond],$$
for any $\bs\in\R^3_+$ satisfying $|\bs|\leq M$.
\end{proof}
%\begin{lm}\label{lm3.2}
%	Suppose $U$ satisfies the hypotheses of Proposition \ref{Prop_fct_U}. Let $q_0$ like in \eqref{def_q0}, i.e. $q_0>1$ satisfies $-\alpha +\frac{q_0-1}2\gamma=0$ where $\alpha$ and $\gamma$ are as in \eqref{Inequality_function_U_in_Proposition} and \eqref{In_U2}. \red{NEED TO CORRECT ??s}
%	For any $1<q<q_0$,
%	\beq\label{e2-lm3.2}L U^q(x)\leq k_{1q} - k_{2q} U^q(x), \quad x\in\R_+^n
%	\eeq
%	for some positive constants $k_{1q}, k_{2q}$.
%	As a result,
%	\beq\label{e1-lm3.2}
%	\E_x U^q(X(t))\leq \frac{k_{1q}}{k_{2q}} + U^q(x) e^{-k_{2q}t}, \quad t\geq 0.
%	\eeq
%	On the other hand,
%	\beq\label{e3-lm3.2}
%	\E_x U^{p}(X(t))\leq e^{K_pt} U^{p}(x)
%	\eeq
%	for any $p\geq 1$ and some constant $K_p >0$.
%\end{lm}
Now, we are ready to establish a kind of drift condition that will help us establish the ergodicity of the underlying systems and obtain the rate of convergence.
\begin{prop}\label{lm3.3}
	Let $q$ be any number in the interval $(1,q_0)$,
	and $U(\bs)=1+|\bs|_1:=1+x+y+z$ for $\bs=(x,y,z)$.
	There is $\kappa^\diamond>0$ and $C_\diamond, C^\diamond >0$ such that
	$$
	\E_\bs [C_\diamond|\BS(n^\diamond T^\diamond)|^q+V^q(\BS(n^\diamond T^\diamond))]\leq C_\diamond U^q(x)+V^q(x)-\kappa^\diamond [C_\diamond U^q(\bs)+V^q(\bs)]^{\frac{q-1}q} + C^\diamond.
	$$
\end{prop}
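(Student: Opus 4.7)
The plan is to verify the polynomial drift inequality for the Lyapunov function $W(\bs):=C_\diamond U^q(\bs)+V^q(\bs)$ at the discrete time $T^\star:=n^\diamond T^\diamond$, by treating the $U^q$ and $V^q$ summands separately and choosing the coupling constant $C_\diamond$ large only at the very end. For the $U^q$ part, \eqref{e1-thm1} applied at $t=T^\star$ gives $\E_\bs U^q(\BS(T^\star))\leq U^q(\bs)e^{-q\alpha_0 T^\star}+C_{q_0}$; since $U\geq 1$ forces $U^q\geq U^{q-1}$, this rearranges to
\begin{equation*}
\E_\bs U^q(\BS(T^\star))\leq U^q(\bs)-\eta_1 U^{q-1}(\bs)+C_{q_0},\qquad \eta_1:=1-e^{-q\alpha_0 T^\star}>0.
\end{equation*}

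For the $V^q$ part, I would apply the Taylor-type bound \eqref{lm3.0-e1} with $a=V(\bs)\geq 0$ and $x=V(\BS(T^\star))-V(\bs)$ (legitimate since $V(\BS(T^\star))\geq 0$) and take expectations to get
\begin{equation*}
\E V^q(\BS(T^\star))\leq V^q(\bs)+qV^{q-1}(\bs)\,\E[V(\BS(T^\star))-V(\bs)]+c_q\,\E|V(\BS(T^\star))-V(\bs)|^q.
\end{equation*}
The fluctuation term is estimated by expanding $V(\BS(T^\star))-V(\bs)$ via It\^o's formula as $\int_0^{T^\star}\op V(\BS(s))\,ds+M_{T^\star}$, where the martingale $M_t$ collects the stochastic integrals arising in $d(X+Y+Z-\gamma_2\ln Y-\gamma_3\ln Z)$; a Burkholder-Davis-Gundy estimate combined with \eqref{e1-thm1} yields $\E|V(\BS(T^\star))-V(\bs)|^q\leq \tilde K(1+U^q(\bs))$.

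The linear drift $\E[V(\BS(T^\star))-V(\bs)]$ is then split into cases. On the near-boundary set $\mathcal B:=\{|\bs|\leq M,\ \dist(\bs,\partial\R^{3,\circ}_+)\leq\delta\}$, Lemma \ref{lm3.1} supplies the uniform bound $-\rho T^\star$, and the previous display becomes
\begin{equation*}
\E V^q(\BS(T^\star))\leq V^q(\bs)-q\rho T^\star V^{q-1}(\bs)+c_q\tilde K(1+U^q(\bs)).
\end{equation*}
On $\{|\bs|>M\}$, where Lemma \ref{lm3.1} is unavailable, I would instead use the pointwise bound $\op V\leq -\alpha_m V$ from \eqref{LV}; applying It\^o to $V^q$, invoking the refined inequality \eqref{lm3.0-e3} to turn the linear contraction into a $V^q$-contraction with a genuine $V^{q-1}$ correction, and iterating over $n^\diamond$ subintervals of length $T^\diamond$, the choice \eqref{eqn0}, built precisely so that $(n^\diamond-1)\alpha_m$ dominates the binomial error $2^{q_0-1}A_V$, delivers $\E V^q(\BS(T^\star))\leq V^q(\bs)-\tfrac{\rho}{2}V^{q-1}(\bs)+C'$. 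On the remaining compact region $\{|\bs|\leq M,\ \dist(\bs,\partial\R^{3,\circ}_+)>\delta\}$, $V$ is uniformly bounded, so the whole inequality is absorbed into the additive constant $C^\diamond$.

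Finally, I add $C_\diamond$ times the $U^q$ estimate of the first paragraph to the $V^q$ estimate above and pick $C_\diamond$ large enough that $\tfrac12 C_\diamond\eta_1 U^q(\bs)\geq c_q\tilde K(1+U^q(\bs))$: this absorbs the polynomial-in-$U$ fluctuation contribution while leaving a true contraction of order $-\tfrac12 C_\diamond\eta_1 U^{q-1}(\bs)$. The resulting inequality $\E_\bs W(\BS(T^\star))\leq W(\bs)-\tilde\kappa[U^{q-1}(\bs)+V^{q-1}(\bs)]+\tilde C$ is then converted to the claimed form via sub-additivity of $t\mapsto t^{(q-1)/q}$ (which is concave and vanishes at the origin), giving $U^{q-1}(\bs)+V^{q-1}(\bs)\geq c\,[C_\diamond U^q(\bs)+V^q(\bs)]^{(q-1)/q}$. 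The main obstacle is the case $|\bs|>M$: neither the global moment bound \eqref{e1-thm1} nor the boundary-oriented Lemma \ref{lm3.1} alone produces a $V^q$-contraction there, and closing the drift requires simultaneously exploiting the nonlinear inequality \eqref{lm3.0-e3} and the precise quantitative matching between $n^\diamond$ and $(\alpha_m,A_V,\rho)$ arranged in \eqref{eqn0}.
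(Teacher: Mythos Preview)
Your overall architecture matches the paper's: handle $U^q$ via Theorem~\ref{thm1}, handle $V^q$ via the It\^o decomposition $V(\BS(t))=V(\bs)+\int_0^t\op V(\BS(s))\,ds+\wdt h(t)$ together with the algebraic bounds of Lemma~\ref{lm3.0}, then pick $C_\diamond$ large to absorb the $(1+|\bs|_1)^q$ fluctuation term into the $U^q$ contraction. The near-boundary case (via Lemma~\ref{lm3.1}) and the compact interior case are handled exactly as in the paper.

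The gap is in the region $|\bs|_1>M$. Your proposed mechanism, ``iterate over $n^\diamond$ subintervals of length $T^\diamond$,'' does not yield a valid argument. The pointwise bound $\op V\le -\alpha_m V$ from \eqref{LV} holds only while $|\BS(t)|_1>M$; once the trajectory enters the ball you only have $\op V\le A_V$, and you have no a priori control over \emph{when} that transition occurs. An iteration at the discrete times $kT^\diamond$ would need a usable one-step bound on $\E\bigl[V^q(\BS((k{+}1)T^\diamond))\,\big|\,\F_{kT^\diamond}\bigr]$ starting from $|\BS(kT^\diamond)|_1>M$, but none is available: within a single subinterval the process may or may not reach the ball, Lemma~\ref{lm3.1} cannot yet be invoked, and the global bound $\op V\le A_V$ gives growth, not contraction. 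The role of \eqref{eqn0} is also not to control a ``binomial error''; it is the quantitative balance between accumulated contraction while outside the ball and the possible growth incurred after the ball is reached.

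What actually closes this case (and what the paper does) is to introduce the single stopping time
\[
\zeta:=\inf\{t\ge 0:|\BS(t)|_1\le M\}\wedge n^\diamond T^\diamond
\]
and apply the strong Markov property at $\zeta$, splitting on $\{\zeta\le (n^\diamond-1)T^\diamond\}$ versus $\{\zeta>(n^\diamond-1)T^\diamond\}$. On the first event at least time $T^\diamond$ remains after reaching the ball, so the already-established estimate \eqref{e7-lm3.3} for $|\bs|_1\le M$ supplies the term $-q\rho T^\diamond\,V^{q-1}(\BS(\zeta))$, and \eqref{lm3.0-e3} transfers it back to $-\tfrac{q\rho T^\diamond}{2}V^{q-1}(\bs)$ modulo $|\wdt h(\zeta)|^q$. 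On the second event one has the pathwise bound $V(\BS(\zeta))\le V(\bs)-\alpha_m\zeta+\wdt h(\zeta)$ with $\alpha_m\zeta\ge \alpha_m(n^\diamond-1)T^\diamond$, while after $\zeta$ at most $T^\diamond$ elapses and contributes at worst $A_V T^\diamond$ via \eqref{e6-lm3.3}; condition \eqref{eqn0} is precisely what makes the former dominate the latter by the margin $\tfrac{\rho}{2}T^\diamond$. Replacing your subinterval iteration by this stopping-time decomposition is what is needed to complete the proof.
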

\begin{proof}
	First we assume that $1<q\leq 2$.
	%Assume without loss of generality that $d_0\geq 1$.
	In the sequel, $C^\diamond$ is a generic constant depending on $T^\diamond, M, n^\diamond$ but independent of $x\in\R_{++}^n$.
	$C^\diamond$ can differ from line to line.
	Suppose $X(0)=x$. We have from It\^o's formula that
	$$
	V(X(t))=V(x)+\int_0^t \op V(X(s))ds+\wdt h(t)
	$$
	Here
	$$
	\wdt h(t):=\int_0^t (\sigma_1 X(s)dW_1(s)+\sigma_2 Y(s)dW_2(t)+ \sigma_3 Z(s)dW_3(s)-\gamma_2\sigma_2dW_2(s)-\gamma_3\sigma_3dW_3(s))
	$$
	 is a martingale with quadratic variation given by
	\beq\label{e3-lm3.3}\langle \wdt h(t)\rangle=\int_0^t \left(\sigma_1^2X^2(s)+\sigma_2^2 (Y(s)-\gamma_2)^2+\sigma_3^2(  Z(s)-\gamma_3)^2\right)ds\leq K \int_0^t U^2(\BS(s))ds,
	\eeq
	for some constant $K=K(\sigma_1,\sigma_2,\sigma_3,\gamma_2,\gamma_3)$.
	
	Because $\op V(\bs)\leq A_V$, we have $$V(X(T))=V(x)+\int_0^T \op V(X(s))ds+\wdt h(T)\leq V(x)+A_V T+\wdt M(T).$$  Applying \eqref{lm3.0-e2} yields
	\beq\label{e6-lm3.3}
	\begin{aligned}
		\E_\bs [V(\BS(T))]^q
		\leq &  V^q(\bs) + qA_V T V^{q-1}(\bs)+ C^\diamond(1+|\bs|_1)^q, \quad T\leq n^\diamond T^\diamond.
	\end{aligned}
	\eeq
	where $|\bs|_1=x+y+z$.
	%	{\color{red}
		%	\begin{align*}
			%	\Gamma_L(V)(x) & = \sum_{i=1}^n \sigma_i^2 x_i^2 \left(\frac{\partial U }{\partial x_i} (x) - \frac{p_i}{x_i}\right)^2 \\
			%	& \leq 	\sum_{i=1}^n \sigma_i^2 x_i^2 \, 2 \left[ \left(\frac{\partial U }{\partial x_i} (x) \right)^2 + \left(\frac{p_i}{x_i}\right)^2  \right] \\
			%	& = 2\Gamma_L(U)(x) + C^\diamond \\
			%	& \leq 2\gamma U^2(x) +C^\diamond \\
			%	& \leq C U^2(x)
			%	\end{align*}
		%	I find this proof for inequality \eqref{e3-lm3.3} and this tell me that either the constant $d_0$ has to be bigger than $2$ either there is an other proof. What is the right way ?
		%	
		%	For the next inequalities, I don't see how we use Itô's isometry. }
	%{\color{cyan} There are some typos here. $d_0$ in \eqref{e3-lm3.3} is changed to 2. Regarding \eqref{e4-lm3.3}, we should use BDG inequality here and I corrected the power in the estimate for $\E|M(t)|^q$. - I got confused about the condition for $\Gamma_LU$ which resulted in the power q/2. - We can also assume without loss of generality that $d_0\geq 1$.}
	%	\bigskip
	On the other hand, since $|\op V(\bs)|\leq K_0 (|\bs|_1+1),\forall \bs\in\R^3_+$ for some constant $K_0$, we deduce from It\^o's isometry and Hölder's inequality  that
	\beq\label{e4-lm3.3}
	\E_\bs\left|\int_0^t LV(\BS(s))ds\right|^q+\E_\bs \left|\wdt h(t)\right|^q \leq C^\diamond (|\bs|_1+1)^q, \quad\forall  t\leq n^\diamond T^\diamond,\bs\in\R^{3,\circ}_+.
	\eeq
	It follows from \eqref{e4-lm3.3} and \eqref{lm3.0-e2} that
	\beq\label{e0-lm3.3}
	\begin{aligned}
		\E_\bs [V(\BS(t))]^q\leq&  V^q(\bs) + q\left[\E_\bs \int_0^t \op V(\BS(s))ds\right] V^{q-1}(\bs)+ C^\diamond\E_\bs\left|\int_0^t \op V(\BS(s))ds+\wdt h(t)\right|^q\\
		\leq &  V^q(\bs) + q\left[\E_\bs \int_0^t \op V(\BS(s))ds \right] V^{q-1}(\bs)+ C^\diamond(1+|\bs|_1)^q, \quad\forall t\leq n^\diamond T^\diamond .
	\end{aligned}
	\eeq
	Thus, if $|\bs|_1\leq M $ and $ \dist(\bs,\partial \R_+^3)\leq \delta$, we have  $\E_\bs \int_0^t \op V(\BS(s))ds\leq -\rho t$, $t\in[T^\diamond , n^\diamond T^\diamond ]$. As a result,
	\beq\label{e5-lm3.3}
	\begin{aligned}
		\E_\bs [V(\BS(T))]^q
		\leq &  V^q(\bs) - q\rho T V^{q-1}(\bs)+ C^\diamond (1+|\bs|_1)^q, \quad T\in[T^\diamond , n^\diamond T^\diamond], |\bs|_1\leq M.
	\end{aligned}
	\eeq
%	On the other hand, we have from \eqref{LV} that $$V(\BS(T))=V(\bs)+\int_0^T \op V(\BS(s))ds+M(T)\leq V(\bs)+A_V T+M(T),$$  which together with \eqref{lm3.0-e2} implies
%	\beq\label{e6-lm3.3}
%	\begin{aligned}
%		\E_\bs [V(\BS(T))]^q
%		\leq &  V^q(\bs) + qA_V T V^{q-1}(\bs)+ C^*(1+|\bs|)^q, \quad T\leq n^*T^*.
%	\end{aligned}
%	\eeq
%	for some $C_2$ independent of $x$.
	Noting that $V(\bs)$ is bounded on the set $\{\bs\in\R_{+}^3: |\bs|_1\leq M, \dist(\bs,\partial \R_{+}^3)\geq\delta\}$, it follows from
	 \eqref{e5-lm3.3} and \eqref{e6-lm3.3} for $|\bs|_1 \leq M$ that
	\beq\label{e7-lm3.3}
	\begin{aligned}
		\E_\bs [V(\BS(T))]^q
		\leq &  V^q(\bs) - q\rho T V^{q-1}(\bs)+ C^\diamond ,\quad \forall T\in[T^\diamond , n^\diamond T^\diamond].
	\end{aligned}
	\eeq
	
		Define 	 $$\zeta=\inf\{t\geq 0: X(t)+Y(t)+Z(t)\leq M\}\wedge (n^\diamond T^\diamond).$$
	From now on, we suppose that $|\bs_1| \leq M$. For $t\leq \zeta$, we deduce from \eqref{LV} that
	%	For any $\|x\|\geq M$  and $t\leq \zeta$, we have
	\beq\label{e9-lm3.3}
	V(\BS(t))= V(\bs)+\int_0^t\op V(\BS(s))ds +\wdt h(t)\leq V(\bs)- \alpha_mt+\wdt h(t).
	\eeq
	%\beq\label{e6-lm3.3}
	%\begin{aligned}
	%	\E_\bs [V(X(t))]^q\leq&  V^q(x) + q\left[\E \int_0^t LV(X(s)ds +1\right] V^{q-1}(x)+ C_1\left(1+\E_\bs\int_0^t U^q(X(s)\right)ds\\
	%	\leq &  V^q(x) + q\left[\E \int_0^t LV(X(s)ds +1\right] V^{q-1}(x)+ C_2(1+U^q(x)+T).
	%\end{aligned}
	%\eeq
	We have from \eqref{e7-lm3.3}, \eqref{e9-lm3.3}, \eqref{lm3.0-e3}, and the strong Markov property of $X(t)$ that
	\beq\label{e1-lm3.3}
	\begin{aligned}
		\E_\bs& \left[\1_{\{\zeta \leq T^\diamond (n^\diamond -1)\}} V^q(\BS(n^\diamond T^\diamond ))\right]\\[0.5ex]
		\leq &\E_\bs \left[\1_{\{\zeta \leq T^\diamond (n^\diamond -1)\}} \left[V^q(\BS(\zeta))+C^\diamond \right] \right] -\E_\bs\left[\1_{\{\zeta \leq T^\diamond (n^\diamond -1)\}}q\rho(n^\diamond T^\diamond -\zeta)V^{q-1}(\BS(\zeta))\right]\\[0.5ex]
		\leq& \E_\bs \left[\1_{\{\zeta \leq T^\diamond (n^\diamond -1)\}} (V(\bs)+\wdt h(\zeta))^q+C^\diamond  \right] -q\rho T^\diamond \E_\bs\left[\1_{\{\zeta \leq T^\diamond (n^\diamond -1)\}}(V(\bs)+\wdt h(\zeta))^{q-1}\right]\\[0.5ex]
		\leq& \E_\bs\left[\1_{\{\zeta \leq T^\diamond (n^\diamond -1)\}}\left( V^q(\bs)-\frac{q\rho T^\diamond }2 V^{q-1}(\bs)+ q\wdt h(\zeta)V^{q-1}(\bs)+C^\diamond (|\wdt h(\zeta)|^q+1)\right)\right].
	\end{aligned}
	\eeq
	If $T^\diamond (n^\diamond -1)\leq \zeta \leq T^\diamond n^\diamond $, we have
	\begingroup
	\allowdisplaybreaks
	\begin{align*}
		\E_\bs &\left[\1_{\{\zeta \geq T^\diamond (n^\diamond -1)\}} V^q(\BS(n^\diamond T^\diamond ))\right]\\[0.5ex]
		\leq &\E_\bs \left[\1_{\{\zeta \geq T^\diamond (n^\diamond -1)\}} V^q(\BS(\zeta))+C^\diamond \right] +qA_V\E_\bs\left[\1_{\{\zeta \geq T^\diamond (n^\diamond -1)\}}(n^\diamond T^\diamond -\zeta)V^{q-1}(\BS(\zeta))\right] \\[0.5ex]
		&\text{ (thanks to \eqref{e6-lm3.3} and the strong Markov property)}\\[0.5ex]
		\leq& \E_\bs \left[\1_{\{\zeta \geq T^\diamond (n^\diamond -1)\}} [(V(\bs)+\wdt h(\zeta)- \alpha_m\zeta)^q+C^\diamond ]\right] +qA_V T^\diamond \E_\bs\left[\1_{\{\zeta \geq T^\diamond (n^\diamond -1)\}}(V(\bs)+\wdt h(\zeta)- \alpha_m\zeta)^{q-1}\right] \\[0.5ex]
		&\text{ (because of \eqref{e9-lm3.3})}\\[0.5ex]
		\leq& \E_\bs\left[\1_{\{\zeta \geq T^\diamond (n^\diamond -1)\}}\left( V^q(\bs)-q \alpha_m\zeta V^{q-1}(\bs)+q\wdt h(\zeta) V^{q-1}(\bs) + C^\diamond \left(|\wdt h(\zeta)|+1\right)^q\right)\right]\\[0.5ex]
		&+ 2qA_V T^\diamond \E_\bs\left[\1_{\{\zeta \geq T^\diamond (n^\diamond -1)\}} \left( V^{q-1}(\bs)+|\wdt h(\zeta)|^{q-1}\right)\right]\\[0.5ex]
		& \text{ (applying \eqref{lm3.0-e1} and the inequality $|x+y|^{q-1}\leq 2(|x|^{q-1}+|y|^{q-1})$)} \\[0.5ex]
		\leq & \E_\bs \left[ \1_{\{\zeta \geq T^\diamond (n^\diamond -1)\}} \left( V^q(\bs)-\frac{q\rho T^\diamond }2 V^{q-1}(\bs)+q \wdt h(\zeta)V^{q-1}(\bs) +C^\diamond \left(|\wdt h(\zeta)|+1\right)^q \right)\right] \\
		& \text{ (since } \alpha_m \zeta\geq \alpha_mT^\diamond (n^\diamond -1))\geq \left(2A_V+\frac{\rho}2\right)T^\diamond).\\[-2ex]
		& \stepcounter{equation}\tag{\theequation}\label{e2-lm3.3}
	\end{align*}
	\endgroup
	As a result, by adding \eqref{e1-lm3.3} and \eqref{e2-lm3.3} and noting that $\E_\bs \wdt h(\zeta)=0$, we have
	\beq\label{e10-lm3.3}
	\begin{aligned}
		\E_\bs V^q(\BS(n^\diamond T^\diamond ))\leq&  V^q(\bs)-q\frac\rho2 T^\diamond  V^{q-1}(\bs)+ C^\diamond  \E_\bs (|\wdt h(\zeta)|+1)^q\\
		\leq& V^q(\bs)-q\frac\rho2 T^\diamond  V^{q-1}(\bs) + C^\diamond  U^{q}(\bs),
	\end{aligned}
	\eeq
	where the inequality $\E_\bs (|\wdt h(\zeta)|+1)^q\leq C^\diamond  U^{q}(\bs)$ comes from an application of
	the Burkholder-Davis-Gundy Inequality, H\"older's inequality, \eqref{e3-lm3.3} and \eqref{e2-thm1}. From \eqref{e1-thm1}, we have
	\beq\label{e11-lm3.3}
	\begin{aligned}
		\E_\bs U^q(\BS(n^\diamond T^\diamond ))\leq&  U^q(\bs)-\left(1-e^{-k_{2q} n^\diamond T^\diamond }\right) U^q(\bs) + \frac{k_{1q}}{k_{2q}}.
	\end{aligned}
	\eeq
	Combining \eqref{e10-lm3.3} and \eqref{e11-lm3.3}, we get that
	\beq\label{e12-lm3.3}
	\E_\bs \left[V^q(\BS(n^\diamond T^\diamond ))+C_\diamond U^q(\BS(n^\diamond T^\diamond ))\right]
	\leq V^q(\bs)+C_\diamond U^q(\bs)-\kappa^\diamond  [V^q(\bs)+C_\diamond U^q(\bs)]^{(q-1)/q} + C^\diamond ,
	\eeq
	for some $ \kappa^\diamond >0, C^\diamond >0$ and sufficiently large $C_\diamond$.
\end{proof}
\begin{proof}[Proof of Theorem \ref{thm4}]
Having Proposition \ref{lm3.3}, the proof of Theorem \ref{thm4} is standard. Because of the nondegeneracy of the diffusion process and \eqref{e12-lm3.3}, we have from \cite[Theorem 3.6]{jarner2002polynomial} that
\begin{equation}\label{thm4-e11}
\lim_{k\to\infty}k^{ q-1}\|P_{kn^\diamond T^\diamond}(\bs,\cdot)-\mu^\diamond(\cdot)\|_{TV}=0,\; 1\leq  q<q_0
\end{equation}
where $\mu^\diamond$ is an invariant probability measure of the Markov chain $\{\BS(n^\diamond T^\diamond )\}$, which is also an invariant probability measure of the Markov process $\{\BS(t),t\geq 0\}$ due to the uniqueness of invariant probability measures.
Because $\|P_t(\bs,\cdot)-\mu^\diamond(\cdot)\|_{TV}$ is decreasing in $t$, we can easily deduce \eqref{thm4-e1} from \eqref{thm4-e11}.
A similar argument can be found in \cite[Proof of Theorem 1.1]{benaim2022stochastic} or \cite[Theorem 2.2]{dieu2016classification}. The proof is complete.
\end{proof}
\textbf{Acknowledgments:} The research has been done under the research project QG.22.10 “Asymptotic behaviour of mathematical models in ecology” of Vietnam National University, Hanoi for Nguyen Trong Hieu. A. Hening and D. Nguyen acknowledge
support from the NSF through the grants DMS-2147903  and DMS-1853467 respectively.
N. Nguyen acknowledges support from an AMS-Simons travel grant.

\bibliographystyle{amsalpha}
\bibliography{LV}

\providecommand{\bysame}{\leavevmode\hbox to3em{\hrulefill}\thinspace}
\providecommand{\MR}{\relax\ifhmode\unskip\space\fi MR }
% \MRhref is called by the amsart/book/proc definition of \MR.
\providecommand{\MRhref}[2]{%
  \href{http://www.ams.org/mathscinet-getitem?mr=#1}{#2}
}
\providecommand{\href}[2]{#2}
\begin{thebibliography}{ERSS13}

\bibitem[BBN22]{benaim2022stochastic}
Michel Bena{\"\i}m, Antoine Bourquin, and Dang~H Nguyen, \emph{Stochastic
  persistence in degenerate stochastic lotka-volterra food chains}, Discrete
  and Continuous Dynamical Systems-B (2022).

\bibitem[Ben18]{B18}
M.~Bena{\"\i}m, \emph{Stochastic persistence}, preprint.

\bibitem[BHS08]{BHS08}
M.~Bena{\"\i}m, J.~Hofbauer, and W.~H. Sandholm, \emph{Robust permanence and
  impermanence for stochastic replicator dynamics}, J. Biol. Dyn. \textbf{2}
  (2008), no.~2, 180--195. \MR{2427526}

\bibitem[BL16]{BL16}
M.~Bena{\"\i}m and C.~Lobry, \emph{Lotka {V}olterra in fluctuating environment
  or ``how switching between beneficial environments can make survival
  harder''}, The Annals of Applied Probability \textbf{26} (2016), no.~6,
  3754--3785.

\bibitem[BS09]{BS09}
M.~Bena{\"\i}m and S.~J. Schreiber, \emph{Persistence of structured populations
  in random environments}, Theoretical Population Biology \textbf{76} (2009),
  no.~1, 19--34.

\bibitem[CE89]{CE89}
P.~L. Chesson and S.~Ellner, \emph{Invasibility and stochastic boundedness in
  monotonic competition models}, Journal of Mathematical Biology \textbf{27}
  (1989), no.~2, 117--138.

\bibitem[Che82]{C82}
Peter~L Chesson, \emph{The stabilizing effect of a random environment}, Journal
  of Mathematical Biology \textbf{15} (1982), no.~1, 1--36.

\bibitem[Che00]{C00}
P.~Chesson, \emph{General theory of competitive coexistence in
  spatially-varying environments}, Theoretical Population Biology \textbf{58}
  (2000), no.~3, 211--237.

\bibitem[EHS15]{EHS15}
S.~N. Evans, A.~Hening, and S.~J. Schreiber, \emph{Protected polymorphisms and
  evolutionary stability of patch-selection strategies in stochastic
  environments}, J. Math. Biol. \textbf{71} (2015), no.~2, 325--359.
  \MR{3367678}

\bibitem[EK09]{ethier2009markov}
Stewart~N Ethier and Thomas~G Kurtz, \emph{Markov processes: characterization
  and convergence}, John Wiley \& Sons, 2009.

\bibitem[ERSS13]{ERSS13}
S.~N. Evans, P.~L. Ralph, S.~J. Schreiber, and A.~Sen, \emph{Stochastic
  population growth in spatially heterogeneous environments}, J. Math. Biol.
  \textbf{66} (2013), no.~3, 423--476. \MR{3010201}

\bibitem[Hal77a]{H77}
TG~Hallam, \emph{On persistence of aquatic ecosystems.}

\bibitem[Hal77b]{H772}
THOMAS~G Hallam, \emph{Controlled persistence in rudimentary plankton models},
  Mathematical Modelling \textbf{4} (1977), 2081--2088.

\bibitem[Hal78]{H78}
Thomas~G Hallam, \emph{Structural sensitivity of grazing formulations in
  nutrient controlled plankton models}, Journal of Mathematical Biology
  \textbf{5} (1978), no.~3, 269--280.

\bibitem[HN18]{HN16}
A.~Hening and D.~H. Nguyen, \emph{Coexistence and extinction for stochastic
  {K}olmogorov systems}, Ann. Appl. Probab. \textbf{28} (2018), no.~3,
  1893--1942.

\bibitem[HN20]{HN20}
A.~Hening and D.~H. Nguyen, \emph{The competitive exclusion principle in
  stochastic environments}, Journal of Mathematical Biology \textbf{80} (2020),
  1323–--1351.

\bibitem[HNC21]{HNC20}
A~Hening, D.~Nguyen, and P~Chesson, \emph{A general theory of coexistence and
  extinction for stochastic ecological communities}, Journal of Mathematical
  Biology \textbf{82} (2021), no.~6, 1--76.

\bibitem[HNS22]{HNS21}
A.~Hening, D.~H. Nguyen, and S.~J. Schreiber, \emph{A classification of the
  dynamics of three-dimensional stochastic ecological systems}, Annals of
  Applied Probability \textbf{32} (2022), no.~2.

\bibitem[IW89]{IW1}
N.~Ikeda and S.~Watanabe, \emph{Stochastic differential equations and diffusion
  processes}, North-Holland Publishing Co., Amsterdam, 1989.

\bibitem[JR02]{jarner2002polynomial}
S{\o}ren~F Jarner and Gareth~O Roberts, \emph{Polynomial convergence rates of
  markov chains}, The Annals of Applied Probability \textbf{12} (2002), no.~1,
  224--247.

\bibitem[LES03]{LES03}
R.~Lande, S.~Engen, and B.-E. Saether, \emph{Stochastic population dynamics in
  ecology and conservation}, Oxford University Press on Demand, 2003.

\bibitem[Mao97]{MAO97}
X.~Mao, \emph{Stochastic differential equations and their applications},
  Horwood Publishing Series in Mathematics \& Applications, Horwood Publishing
  Limited, Chichester, 1997. \MR{1475218}

\bibitem[NYZ17]{nguyen2017certain}
Dang~Hai Nguyen, George Yin, and Chao Zhu, \emph{Certain properties related to
  well posedness of switching diffusions}, Stochastic Processes and their
  Applications \textbf{127} (2017), no.~10, 3135--3158.

\bibitem[NYZ20]{nguyen2020long}
D.~H. Nguyen, G.~Yin, and C.~Zhu, \emph{Long-term analysis of a stochastic sirs
  model with general incidence rates}, SIAM Journal on Applied Mathematics
  \textbf{80} (2020), no.~2, 814--838.

\bibitem[PR85]{PR85}
T.~Powell and P.~J. Richerson, \emph{Temporal variation, spatial heterogeneity,
  and competition for resources in plankton systems: a theoretical model}, The
  American Naturalist \textbf{125} (1985), no.~3, 431--464.

\bibitem[Rua93]{R93}
Shigui Ruan, \emph{Persistence and coexistence in
  zooplankton-phytoplankton-nutrient models with instantaneous nutrient
  recycling}, Journal of Mathematical Biology \textbf{31} (1993), no.~6,
  633--654.

\bibitem[SBA11]{SBA11}
S.~J. Schreiber, M.~Bena{\"\i}m, and K.~A.~S. Atchad{\'e}, \emph{Persistence in
  fluctuating environments}, J. Math. Biol. \textbf{62} (2011), no.~5,
  655--683. \MR{2786721}

\bibitem[SLS09]{SLS09}
S.~J. Schreiber and J.~O. Lloyd-Smith, \emph{Invasion dynamics in spatially
  heterogeneous environments}, The American Naturalist \textbf{174} (2009),
  no.~4, 490--505.

\bibitem[WSF88]{W88}
Joseph~S Wroblewski, Jorge~L Sarmiento, and Glenn~R Flierl, \emph{An ocean
  basin scale model of plankton dynamics in the north atlantic: 1. solutions
  for the climatological oceanographic conditions in may}, Global
  Biogeochemical Cycles \textbf{2} (1988), no.~3, 199--218.

\bibitem[XY16]{xu2016competition}
Chaoqun Xu and Sanling Yuan, \emph{Competition in the chemostat: a stochastic
  multi-species model and its asymptotic behavior}, Mathematical biosciences
  \textbf{280} (2016), 1--9.

\bibitem[Yin16]{dieu2016classification}
G~Yin, \emph{Classification of asymptotic behavior in a stochastic sir model},
  SIAM Journal on Applied Dynamical Systems \textbf{15} (2016), no.~2,
  1062--1084.

\bibitem[YYZ19]{YYZ19}
Xingwang Yu, Sanling Yuan, and Tonghua Zhang, \emph{Asymptotic properties of
  stochastic nutrient-plankton food chain models with nutrient recycling},
  Nonlinear Analysis: Hybrid Systems \textbf{34} (2019), 209--225.

\end{thebibliography}
\end{document}